\newtheorem{theorem}{Theorem}
\newtheorem{lemma}[theorem]{Lemma}
\newtheorem{corollary}[theorem]{Corollary}
\newtheorem{proposition}[theorem]{Proposition}
\theoremstyle{definition}
\newtheorem{definition}[theorem]{Definition}
\newcommand{\mf}[1]{\mathfrak{#1}}
\theoremstyle{remark}
\newtheorem{remark}[theorem]{Remark}
\numberwithin{equation}{section}
\def\mf{\mathfrak}
\title{The $2$-factor polynomial detects even perfect matchings}
\author{Scott Baldridge\\
\small Department of Mathematics\\[-0.8ex]
\small Louisiana State University\\[-0.8ex]
\small Baton Rouge, LA, U.S.A.\\
\small\tt sbaldrid@math.lsu.edu\\
\and
Adam M. Lowrance\thanks{The second author was supported in part by NSF grant DMS-1811344.}\\
\small Department of Mathematics and Statistics\\[-0.8ex]
\small Vassar College\\[-0.8ex]
\small Poughkeepsie, NY, U.S.A.\\
\small\tt adlowrance@vassar.edu\\
\and
Ben McCarty\\
\small Department of Mathematical Sciences\\[-0.8ex]
\small University of Memphis\\[-0.8ex]
\small Memphis, TN, U.S.A.\\
\small\tt ben.mccarty@memphis.edu}
\begin{document}

\maketitle

\begin{abstract}
In this paper, we prove that the $2$-factor polynomial, an invariant of a planar trivalent graph with a perfect matching, counts the number of $2$-factors that contain the perfect matching as a subgraph. Consequently, we show that the polynomial detects even perfect matchings.  
\end{abstract}

\section{Introduction}

In \cite{BaldCohomology}, the first author developed a cohomology theory of a planar trivalent graph equipped with a perfect matching, and observed that the graded Euler characteristic of that cohomology is a polynomial invariant called the $2$\emph{-factor polynomial}.  Given  a planar trivalent graph $G$ with a perfect matching $M$, a perfect matching drawing $\Gamma$ for the pair $(G,M)$ is a topological embedding of the graph in the $2$-sphere that marks the perfect matching edges.  We define the $2$-factor bracket of the perfect matching drawing, written $\langle \Gamma \rangle_2$ as the Laurent polynomial in the variable $z$ calculated recursively on immersed perfect matching drawings by the relations in Figure \ref{figure:2factordef}.

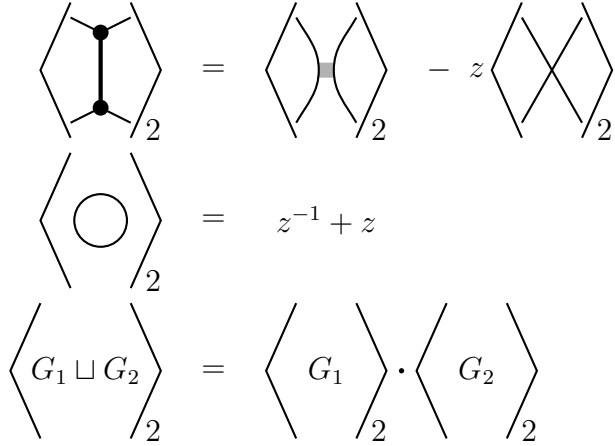
\begin{figure}[h]
$$\begin{tikzpicture}[thick, scale = 1]

\draw[ultra thick] (0,0) -- (0,1);
\fill (0,0) circle (3pt);
\fill (0,1) circle (3pt);
\draw (-.4,-.2) -- (0,0) -- (.4,-.2);
\draw (-.4,1.2) -- (0,1) -- (.4,1.2);
\draw (-.4,-.4) -- (-.8,.5) -- (-.4,1.4);
\draw (.4,-.4) -- (.8,.5) -- (.4,1.4);
\draw (.7,-.3) node{$2$};

\draw (1.5, .5) node {$=$};

\begin{scope}[xshift = 3cm]

	\fill[white!70!black] (.12,.6) to 
	(-.12,.6) to [out = -60, in = 90] 
	(-.1,.5) to [out = -90, in = -120] 
	(-.12,.4) to 
	(.12,.4) to [out = 120, in = -90]
	(.1,.5) to [out = 90, in = -120]
	(.12,.6);
	\draw (-.4,-.4) -- (-.8,.5) -- (-.4,1.4);
	\draw (.4,-.4) -- (.8,.5) -- (.4,1.4);
	\draw (.7,-.3) node{$2$};
	\draw (-.4,-.2) to [out = 60, in = -90] (-.1,.5) to [out = 90, in = -60] (-.4,1.2);
	\draw (.4,-.2) to [out = 120, in = -90] (.1,.5) to [out = 90, in = -120] (.4,1.2);

\end{scope}


\draw (4.5, .5) node{$-$};
\draw (5,.5) node{$z$};

\begin{scope}[xshift = 6 cm]
	\draw (-.4,-.4) -- (-.8,.5) -- (-.4,1.4);
	\draw (.4,-.4) -- (.8,.5) -- (.4,1.4);
	\draw (.7,-.3) node{$2$};
	\draw (-.4,-.2) -- (.4,1.2);
	\draw (-.4,1.2) -- (.4,-.2);

\end{scope}

\begin{scope}[yshift = -2cm]

\draw (0,.5) circle (.35cm);
\draw (-.4,-.4) -- (-.8,.5) -- (-.4,1.4);
\draw (.4,-.4) -- (.8,.5) -- (.4,1.4);
\draw (.7,-.3) node{$2$};

\draw (1.5, .5) node {$=$};

\draw (3,.5) node{$z^{-1} + z$};

\end{scope}

\begin{scope}[yshift = -4cm]

\draw (-.2,.5) node {$G_1\sqcup G_2$};
\draw (-.8,-.4) -- (-1.2,.5) -- (-.8,1.4);
\draw (.4,-.4) -- (.8,.5) -- (.4,1.4);
\draw (.7,-.3) node{$2$};

	\draw (1.5, .5) node {$=$};

\begin{scope}[xshift = 3cm]
	
	\draw (0,.5) node{$G_1$};
	\draw (-.4,-.4) -- (-.8,.5) -- (-.4,1.4);
	\draw (.4,-.4) -- (.8,.5) -- (.4,1.4);
	\draw (.7,-.3) node{$2$};

\end{scope}


\fill (4, .5) circle (1pt);

\begin{scope}[xshift = 5 cm]
	\draw (-.4,-.4) -- (-.8,.5) -- (-.4,1.4);
	\draw (.4,-.4) -- (.8,.5) -- (.4,1.4);
	\draw (.7,-.3) node{$2$};
	\draw (0,.5) node{$G_2$};

\end{scope}

\end{scope}

\end{tikzpicture}$$
\caption{ The recursive rules for computing the $2$-factor polynomial.}
\label{figure:2factordef}
\end{figure}

The bold edge in Figure \ref{figure:2factordef} represents a perfect matching edge (the only edges to which we apply the bracket), and the circle represents any immersed circle with no vertices.  

In  Section 3 of \cite{BaldCohomology}, it was shown that the $2$-factor polynomial is independent of the choice of perfect matching drawing, and thus is an invariant of planar trivalent graphs equipped with a perfect matching, i.e., the pair $(G,M)$.  Thus we can define the $2$-factor polynomial of the pair $(G,M)$ to be the $2$-factor bracket of the perfect matching drawing $\Gamma$.
$$\langle G : M \rangle_2(z) = \langle \Gamma \rangle_2(z).$$

The bracket relations in Figure \ref{figure:2factordef} give a recursive method for computing the $2$-factor polynomial. For example, the $2$-factor polynomial of the $\theta$ graph with perfect matching is:

\begin{eqnarray*}
\Big\langle \raisebox{-0.43\height}{\includegraphics[scale=0.30]{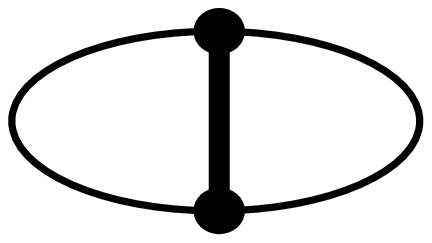}} \Big{\rangle}_{\!\!2} &=& \Big \langle \raisebox{-0.43\height}{\includegraphics[scale=0.20]{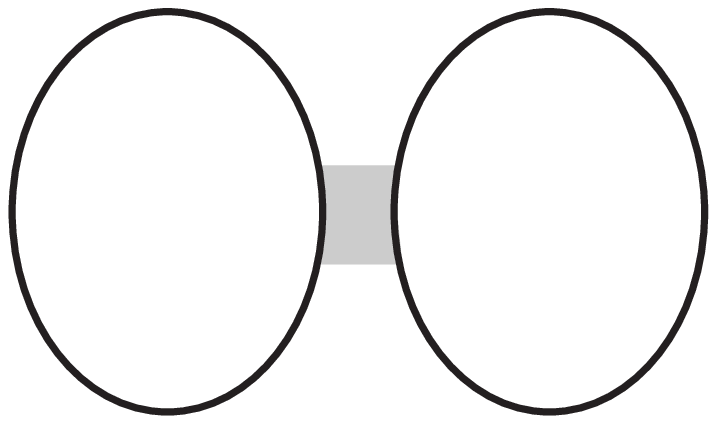}} \Big\rangle_{\!\!2} -z \Big\langle \raisebox{-0.43\height}{\includegraphics[scale=0.20]{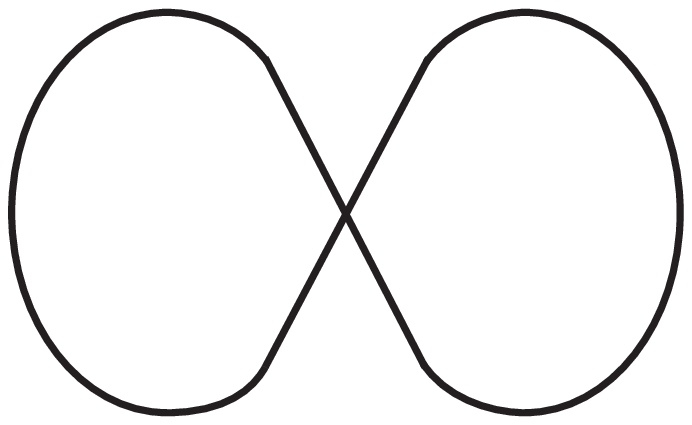}} \Big\rangle_{\!\!2}\\
&=& (z^{-1}+z)^2 - z(z^{-1}+z)\\
&=& z^{-2}+1.\\
\end{eqnarray*}

The polynomial depends on both the graph and the perfect matching.  To see this dependence, we compute the $2$-factor polynomial of the $3$\emph{-prism}, or \emph{circular ladder}, denoted $P_3$, for each of the perfect matchings in Figure~\ref{fig:3prism}.  

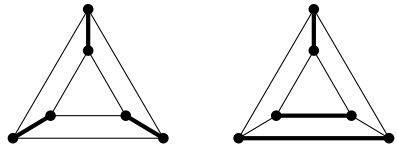
\begin{figure}[h]
$$\begin{tikzpicture}[scale = 1]

\begin{scope}[]
\draw[ultra thick] (-0.5,-0.5) -- (-1,-0.8);
\draw[ultra thick] (0.5,-0.5) -- (1,-0.8);
\draw[ultra thick] (0,0.866-0.5) -- (0,0.866+0.05);
\draw (-1,-0.8) -- (0,0.866+0.05);
\draw (1,-0.8) -- (0,0.866+0.05);
\draw (-1,-0.8) -- (1,-0.8);
\fill (-1,-0.8) circle (2pt);
\fill (1,-0.8) circle (2pt);
\fill (0,0.866+0.05) circle (2pt);
\fill (-0.5,-0.5) circle (2pt);
\fill (0.5,-0.5) circle (2pt);
\fill (0,0.866-0.5) circle (2pt);
\draw (-0.5,-0.5) -- (0.5,-0.5) -- (0,0.866-0.5) -- cycle;

\end{scope}

\begin{scope}[xshift = 3 cm]
\draw (-0.5,-0.5) -- (-1,-0.8);
\draw (0.5,-0.5) -- (1,-0.8);
\draw[ultra thick] (0,0.866-0.5) -- (0,0.866+0.05);
\draw (-1,-0.8) -- (0,0.866+0.05);
\draw (1,-0.8) -- (0,0.866+0.05);
\draw[ultra thick] (-1,-0.8) -- (1,-0.8);
\fill (-1,-0.8) circle (2pt);
\fill (1,-0.8) circle (2pt);
\fill (0,0.866+0.05) circle (2pt);
\fill (-0.5,-0.5) circle (2pt);
\fill (0.5,-0.5) circle (2pt);
\fill (0,0.866-0.5) circle (2pt);
\draw (-0.5,-0.5) -- (0.5,-0.5) -- (0,0.866-0.5) -- cycle;
\draw[ultra thick] (-0.5,-0.5) -- (0.5,-0.5);

\end{scope}

\end{tikzpicture}$$
\caption{A $3$-prism with perfect matchings $L$ and $C$.}
\label{fig:3prism}
\end{figure}

The calculation of these examples also serve as a way to introduce the \emph{cube of resolutions}, which we will use throughout this paper (cf. \cite{BaldCohomology}).  Briefly, to construct a cube of resolutions for $(P_3,L)$, first resolve each perfect matching of the graph into one of the two resolutions shown in the first equation of Figure~\ref{figure:2factordef} to get a state.  Next, order the states into columns from left-to-right by the number of ``cross-resolutions'' $\bigtimes$ in each state.  Finally, draw an arrow from a state in one column to a state in the adjacent column to the right if the two states are related by replacing one ``open-resolution'' $)($ with one cross-resolution $\bigtimes$.  The cube of resolutions for $(P_3,L)$ is given in Figure~\ref{fig:3PrismCube}.

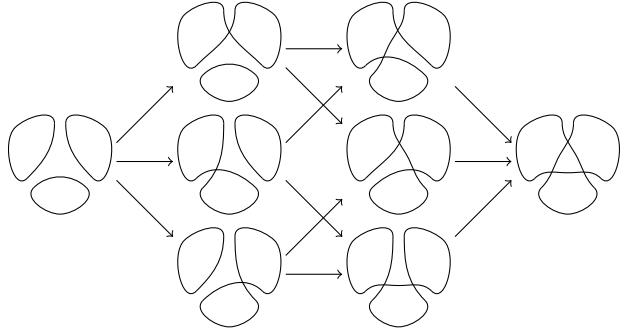
\begin{figure}[H]
$$\begin{tikzpicture}[scale = 0.5]

\begin{scope}
\draw (0.15,0.866-0.05) to [out = -90, in = 135] (0.8 + 0.08,-0.8 + 0.15);
\draw (-0.15,0.866-0.05) to [out = -90, in = 45] (-0.8 - 0.08,-0.8 + 0.15);
\draw (-0.8 + 0.095,-0.8 - 0.1) to [out = 45, in = 135] (0.8 - 0.095,-0.8 - 0.1);

\draw (0.15,0.866-0.05) to [out = 90, in = 135] (1.2,0.7) to [out = -45, in = -45] (0.8 + 0.08,-0.8 + 0.15);
\draw (-0.15,0.866-0.05) to [out = 90, in = 45] (-1.20, 0.7) to [out = 225, in = 225] (-0.8 - 0.08,-0.8 + 0.15);
\draw (0.8 - 0.095,-0.8 - 0.1) to [out = -45, in = 0] (0, -1.6) to [out = 180, in = 225]  (-0.8 + 0.095,-0.8 - 0.1);
\end{scope}

\begin{scope}[xshift = 1 cm, yshift = -0.2 cm]
\draw[->] (0.5,0.5) --(2,2);
\draw[->] (0.5,0) --(2,0);
\draw[->] (0.5,-0.5) --(2,-2);
\end{scope}

\begin{scope}[xshift = 5.5 cm, yshift = -0.2 cm]
\draw[->] (0.5,0.5) --(2,2);
\draw[->] (0.5,-0.5) --(2,-2);
\draw[->] (0.5,3) --(2,3);
\draw[->] (0.5,-3) --(2,-3);
\draw[->] (0.5,-2.5) --(2,-1);
\draw[->] (0.5,2.5) --(2,1);
\end{scope}

\begin{scope}[xshift = 8.5 cm, yshift = -0.2 cm]
\draw[->] (2,2) -- (3.5,0.5);
\draw[->] (2,0) -- (3.5,0);
\draw[->] (2,-2) -- (3.5,-0.5);
\end{scope}

\begin{scope}[xshift = 4.5 cm, yshift = 3 cm]
\draw (-0.15,0.866-0.05) to [out = -90, in = 135] (0.8 + 0.08,-0.8 + 0.15);
\draw (0.15,0.866-0.05) to [out = -90, in = 45] (-0.8 - 0.08,-0.8 + 0.15);
\draw (-0.8 + 0.095,-0.8 - 0.1) to [out = 45, in = 135] (0.8 - 0.095,-0.8 - 0.1);

\draw (0.15,0.866-0.05) to [out = 90, in = 135] (1.2,0.7) to [out = -45, in = -45] (0.8 + 0.08,-0.8 + 0.15);
\draw (-0.15,0.866-0.05) to [out = 90, in = 45] (-1.20, 0.7) to [out = 225, in = 225] (-0.8 - 0.08,-0.8 + 0.15);
\draw (0.8 - 0.095,-0.8 - 0.1) to [out = -45, in = 0] (0, -1.6) to [out = 180, in = 225]  (-0.8 + 0.095,-0.8 - 0.1);
\end{scope}

\begin{scope}[xshift = 4.5 cm, yshift = 0 cm]
\draw (0.15,0.866-0.05) to [out = -90, in = 135] (0.8 + 0.08,-0.8 + 0.15);
\draw (-0.15,0.866-0.05) to [out = -90, in = 45] (-0.8 + 0.095,-0.8 - 0.1);
\draw (-0.8 - 0.08,-0.8 + 0.15) to [out = 45, in = 135] (0.8 - 0.095,-0.8 - 0.1);
\draw (0.15,0.866-0.05) to [out = 90, in = 135] (1.2,0.7) to [out = -45, in = -45] (0.8 + 0.08,-0.8 + 0.15);
\draw (-0.15,0.866-0.05) to [out = 90, in = 45] (-1.20, 0.7) to [out = 225, in = 225] (-0.8 - 0.08,-0.8 + 0.15);
\draw (0.8 - 0.095,-0.8 - 0.1) to [out = -45, in = 0] (0, -1.6) to [out = 180, in = 225]  (-0.8 + 0.095,-0.8 - 0.1);
\end{scope}

\begin{scope}[xshift = 4.5 cm, yshift = -3 cm]
\draw (0.15,0.866-0.05) to [out = -90, in = 135] (0.8 - 0.095,-0.8 - 0.1);
\draw (-0.15,0.866-0.05) to [out = -90, in = 45] (-0.8 - 0.08,-0.8 + 0.15);
\draw (-0.8 + 0.095,-0.8 - 0.1) to [out = 45, in = 135] (0.8 + 0.08,-0.8 + 0.15);
\draw (0.15,0.866-0.05) to [out = 90, in = 135] (1.2,0.7) to [out = -45, in = -45] (0.8 + 0.08,-0.8 + 0.15);
\draw (-0.15,0.866-0.05) to [out = 90, in = 45] (-1.20, 0.7) to [out = 225, in = 225] (-0.8 - 0.08,-0.8 + 0.15);
\draw (0.8 - 0.095,-0.8 - 0.1) to [out = -45, in = 0] (0, -1.6) to [out = 180, in = 225]  (-0.8 + 0.095,-0.8 - 0.1);
\end{scope}

\begin{scope}[xshift = 4.5 cm]
\begin{scope}[xshift = 4.5 cm, yshift = 3 cm]
\draw (-0.15,0.866-0.05) to [out = -90, in = 135] (0.8 + 0.08,-0.8 + 0.15);
\draw (0.15,0.866-0.05) to [out = -90,  in = 60] (-0.2, 0) to [out = 240, in = 45] (-0.8 + 0.095,-0.8 - 0.1);
\draw (-0.8 - 0.08,-0.8 + 0.15) to [out = 45, in = 135] (0.8 - 0.095,-0.8 - 0.1);
\draw (0.15,0.866-0.05) to [out = 90, in = 135] (1.2,0.7) to [out = -45, in = -45] (0.8 + 0.08,-0.8 + 0.15);
\draw (-0.15,0.866-0.05) to [out = 90, in = 45] (-1.20, 0.7) to [out = 225, in = 225] (-0.8 - 0.08,-0.8 + 0.15);
\draw (0.8 - 0.095,-0.8 - 0.1) to [out = -45, in = 0] (0, -1.6) to [out = 180, in = 225]  (-0.8 + 0.095,-0.8 - 0.1);
\end{scope}

\begin{scope}[xshift = 4.5 cm, yshift = 0 cm]
\draw (-0.15,0.866-0.05) to [out = -90, in = 120] (0.2, 0) to [out = -60, in = 135] (0.8 - 0.095,-0.8 - 0.1);
\draw (0.15,0.866-0.05) to [out = -90, in = 45] (-0.8 - 0.08,-0.8 + 0.15);
\draw (-0.8 + 0.095,-0.8 - 0.1) to [out = 45, in = 135] (0.8 + 0.08,-0.8 + 0.15);
\draw (0.15,0.866-0.05) to [out = 90, in = 135] (1.2,0.7) to [out = -45, in = -45] (0.8 + 0.08,-0.8 + 0.15);
\draw (-0.15,0.866-0.05) to [out = 90, in = 45] (-1.20, 0.7) to [out = 225, in = 225] (-0.8 - 0.08,-0.8 + 0.15);
\draw (0.8 - 0.095,-0.8 - 0.1) to [out = -45, in = 0] (0, -1.6) to [out = 180, in = 225]  (-0.8 + 0.095,-0.8 - 0.1);
\end{scope}

\begin{scope}[xshift = 4.5 cm, yshift = -3 cm]
\draw (0.15,0.866-0.05) to [out = -90, in = 135] (0.8 - 0.095,-0.8 - 0.1);
\draw (-0.15,0.866-0.05) to [out = -90, in = 45] (-0.8 + 0.095,-0.8 - 0.1);
\draw (-0.8 - 0.08,-0.8 + 0.15) to [out = 45, in = 180] (0, -0.5) to [out = 0, in = 135] (0.8 + 0.08,-0.8 + 0.15);
\draw (0.15,0.866-0.05) to [out = 90, in = 135] (1.2,0.7) to [out = -45, in = -45] (0.8 + 0.08,-0.8 + 0.15);
\draw (-0.15,0.866-0.05) to [out = 90, in = 45] (-1.20, 0.7) to [out = 225, in = 225] (-0.8 - 0.08,-0.8 + 0.15);
\draw (0.8 - 0.095,-0.8 - 0.1) to [out = -45, in = 0] (0, -1.6) to [out = 180, in = 225]  (-0.8 + 0.095,-0.8 - 0.1);
\end{scope}

\end{scope}

\begin{scope}[xshift = 13.5 cm]
\begin{scope}
\draw (-0.15,0.866-0.05) to [out = -90, in = 120] (0.2, 0) to [out = -60, in = 135] (0.8 - 0.095,-0.8 - 0.1);
\draw (0.15,0.866-0.05) to [out = -90,  in = 60] (-0.2, 0) to [out = 240, in = 45] (-0.8 + 0.095,-0.8 - 0.1);
\draw (-0.8 - 0.08,-0.8 + 0.15) to [out = 45, in = 180] (0, -0.5) to [out = 0, in = 135] (0.8 + 0.08,-0.8 + 0.15);
\draw (0.15,0.866-0.05) to [out = 90, in = 135] (1.2,0.7) to [out = -45, in = -45] (0.8 + 0.08,-0.8 + 0.15);
\draw (-0.15,0.866-0.05) to [out = 90, in = 45] (-1.20, 0.7) to [out = 225, in = 225] (-0.8 - 0.08,-0.8 + 0.15);
\draw (0.8 - 0.095,-0.8 - 0.1) to [out = -45, in = 0] (0, -1.6) to [out = 180, in = 225]  (-0.8 + 0.095,-0.8 - 0.1);
\end{scope}

\end{scope}

\end{tikzpicture}$$
\caption{The cube of resolutions for $(P_3,L)$.}
\label{fig:3PrismCube}
\end{figure}

To calculate the polynomial, note that each state contributes a term of the form $(-z)^k(z+z^{-1})^\ell$ where $k$ is the number of cross-resolutions $\bigtimes$ and $\ell$ is the number of immersed circles in the state.   From Figure \ref{fig:3PrismCube}, we can calculate the polynomial of $(P_3,L)$ directly to see that 
\begin{equation*}
\begin{aligned}
\langle P_3 : L \rangle_2(z) &  = (z+z^{-1})^3 - 3z (z+z^{-1})^2 + 3z^2 (z+z^{-1}) - z^3 (z+z^{-1}) \\
& = z^{-3} - z^2+z^3-z^4. \\
\end{aligned}
\end{equation*}

\begin{figure}[H]
$$\begin{tikzpicture}[scale = 0.5]

\begin{scope}
\draw (0.15,0.866-0.05) to [out = -90, in = 0] (0.3,-0.3) to [out = 180, in = 0] (0,-0.3);
\draw (-0.15,0.866-0.05) to [out = -90, in = 180] (-0.3, -0.3) to [out = 0, in = 180] (0,-0.3);
\draw (-0.5, -0.7) to [out = 90, in = 180] (0,-0.5) to [out = 0, in = 90] (0.5, -0.7);
\draw (-0.5, -0.7) to [out = 270, in = 180] (0,-1) to [out = 0, in = 270] (0.5, -0.7);

\draw (0.15,0.866-0.05) to [out = 90, in = 135] (0.8,0.7) to [out = -45, in = 0] (0.8,-1.2);
\draw (-0.15,0.866-0.05) to [out = 90, in = 45] (-0.8,0.7) to [out = 225, in = 180] (-0.8,-1.2);
\draw (-0.8,-1.2) -- (0.8,-1.2);
\end{scope}

\begin{scope}[xshift = 1 cm, yshift = -0.2 cm]
\draw[->] (0.5,0.5) --(2,2);
\draw[->] (0.5,0) --(2,0);
\draw[->] (0.5,-0.5) --(2,-2);
\end{scope}

\begin{scope}[xshift = 5.5 cm, yshift = -0.2 cm]
\draw[->] (0.5,0.5) --(2,2);
\draw[->] (0.5,-0.5) --(2,-2);
\draw[->] (0.5,3) --(2,3);
\draw[->] (0.5,-3) --(2,-3);
\draw[->] (0.5,-2.5) --(2,-1);
\draw[->] (0.5,2.5) --(2,1);
\end{scope}

\begin{scope}[xshift = 8.5 cm, yshift = -0.2 cm]
\draw[->] (2,2) -- (3.5,0.5);
\draw[->] (2,0) -- (3.5,0);
\draw[->] (2,-2) -- (3.5,-0.5);
\end{scope}

\begin{scope}[xshift = 4.5 cm, yshift = 3 cm]
\draw (-0.15,0.866-0.05) to [out = -90, in = 0] (0.3,-0.3) to [out = 180, in = 0] (0,-0.3);
\draw  (0.15,0.866-0.05) to [out = -90, in = 180] (-0.3, -0.3) to [out = 0, in = 180] (0,-0.3);
\draw (-0.5, -0.7) to [out = 90, in = 180] (0,-0.5) to [out = 0, in = 90] (0.5, -0.7);
\draw (-0.5, -0.7) to [out = 270, in = 180] (0,-1) to [out = 0, in = 270] (0.5, -0.7);

\draw (0.15,0.866-0.05) to [out = 90, in = 135] (0.8,0.7) to [out = -45, in = 0] (0.8,-1.2);
\draw (-0.15,0.866-0.05) to [out = 90, in = 45] (-0.8,0.7) to [out = 225, in = 180] (-0.8,-1.2);
\draw (-0.8,-1.2) -- (0.8,-1.2);
\end{scope}

\begin{scope}[xshift = 4.5 cm, yshift = 0 cm]
\draw (0.15,0.866-0.05) to [out = -90, in = 0] (0.3,-0.3) to [out = 180, in = 90] (-0.5,-0.7);
\draw (-0.15,0.866-0.05) to [out = -90, in = 180] (-0.3, -0.3) to [out = 0, in = 90] (0.5,-0.7);
\draw (-0.5, -0.7) to [out = 270, in = 180] (0,-1) to [out = 0, in = 270] (0.5, -0.7);

\draw (0.15,0.866-0.05) to [out = 90, in = 135] (0.8,0.7) to [out = -45, in = 0] (0.8,-1.2);
\draw (-0.15,0.866-0.05) to [out = 90, in = 45] (-0.8,0.7) to [out = 225, in = 180] (-0.8,-1.2);
\draw (-0.8,-1.2) -- (0.8,-1.2);
\end{scope}

\begin{scope}[xshift = 4.5 cm, yshift = -3 cm]
\draw (0.15,0.866-0.05) to [out = -90, in = 0] (0.3,-0.3) to [out = 180, in = 0] (0,-0.3);
\draw (-0.15,0.866-0.05) to [out = -90, in = 180] (-0.3, -0.3) to [out = 0, in = 180] (0,-0.3);
\draw (-0.5, -0.7) to [out = 90, in = 180] (0,-0.5) to [out = 0, in = 90] (0.5, -0.7);
\draw (-0.5, -0.7) to [out = 270, in = 180] (0.8,-1.2);

\draw (0.15,0.866-0.05) to [out = 90, in = 135] (0.8,0.7) to [out = -45, in = 0] (0.8,-1.2);
\draw (-0.15,0.866-0.05) to [out = 90, in = 45] (-0.8,0.7) to [out = 225, in = 180] (-0.8,-1.2);
\draw (-0.8,-1.2) to [out = 0, in = 270] (0.5, -0.7) ;
\end{scope}

\begin{scope}[xshift = 4.5 cm]
\begin{scope}[xshift = 4.5 cm, yshift = 3 cm]
\draw (-0.15,0.866-0.05) to [out = -90, in = 0] (0.3,-0.3) to [out = 180, in = 90] (-0.5,-0.7);
\draw  (0.15,0.866-0.05) to [out = -90, in = 180] (-0.3, -0.3) to [out = 0, in = 90] (0.5,-0.7);
\draw (-0.5, -0.7) to [out = 270, in = 180] (0,-1) to [out = 0, in = 270] (0.5, -0.7);

\draw (0.15,0.866-0.05) to [out = 90, in = 135] (0.8,0.7) to [out = -45, in = 0] (0.8,-1.2);
\draw (-0.15,0.866-0.05) to [out = 90, in = 45] (-0.8,0.7) to [out = 225, in = 180] (-0.8,-1.2);
\draw (-0.8,-1.2) -- (0.8,-1.2);
\end{scope}

\begin{scope}[xshift = 4.5 cm, yshift = 0 cm]
\draw (-0.15,0.866-0.05) to [out = -90, in = 0] (0.3,-0.3) to [out = 180, in = 0] (0,-0.3);
\draw  (0.15,0.866-0.05) to [out = -90, in = 180] (-0.3, -0.3) to [out = 0, in = 180] (0,-0.3);
\draw (-0.5, -0.7) to [out = 90, in = 180] (0,-0.5) to [out = 0, in = 90] (0.5, -0.7);
\draw (-0.5, -0.7) to [out = 270, in = 180] (0.8, -1.2);

\draw (0.15,0.866-0.05) to [out = 90, in = 135] (0.8,0.7) to [out = -45, in = 0] (0.8,-1.2);
\draw (-0.15,0.866-0.05) to [out = 90, in = 45] (-0.8,0.7) to [out = 225, in = 180] (-0.8,-1.2);
\draw (-0.8,-1.2) to [out = 0, in = 270] (0.5, -0.7);
\end{scope}

\begin{scope}[xshift = 4.5 cm, yshift = -3 cm]
\draw (0.15,0.866-0.05) to [out = -90, in = 0] (0.3,-0.3) to [out = 180, in = 0] (0,-0.3);
\draw (-0.15,0.866-0.05) to [out = -90, in = 180] (-0.3, -0.3) to [out = 0, in = 90] (0.5,-0.7);
\draw (-0.5, -0.7) to [out = 90, in = 180] (0.3, -0.3);
\draw (-0.5, -0.7) to [out = 270, in = 180] (0.8,-1.2);

\draw (0.15,0.866-0.05) to [out = 90, in = 135] (0.8,0.7) to [out = -45, in = 0] (0.8,-1.2);
\draw (-0.15,0.866-0.05) to [out = 90, in = 45] (-0.8,0.7) to [out = 225, in = 180] (-0.8,-1.2);
\draw (-0.8,-1.2) to [out = 0, in = 270] (0.5, -0.7) ;

\end{scope}

\end{scope}

\begin{scope}[xshift = 13.5 cm]
\begin{scope}
\draw (-0.15,0.866-0.05) to [out = -90, in = 0] (0.3,-0.3) to [out = 180, in = 0] (0,-0.3);
\draw (0.15,0.866-0.05) to [out = -90, in = 180] (-0.3, -0.3) to [out = 0, in = 90] (0.5,-0.7);
\draw (-0.5, -0.7) to [out = 90, in = 180] (0.3, -0.3);
\draw (-0.5, -0.7) to [out = 270, in = 180] (0.8,-1.2);

\draw (0.15,0.866-0.05) to [out = 90, in = 135] (0.8,0.7) to [out = -45, in = 0] (0.8,-1.2);
\draw (-0.15,0.866-0.05) to [out = 90, in = 45] (-0.8,0.7) to [out = 225, in = 180] (-0.8,-1.2);
\draw (-0.8,-1.2) to [out = 0, in = 270] (0.5, -0.7) ;
\end{scope}

\end{scope}

\end{tikzpicture}$$
\caption{The cube of resolutions for $(P_3,C)$.}
\label{fig:3PrismCube2}
\end{figure}
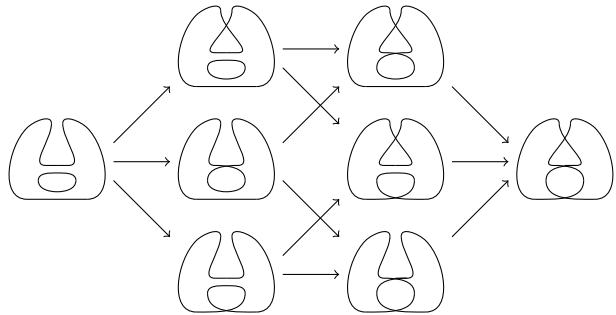

On the other hand, Figure \ref{fig:3PrismCube2} shows the cube of resolutions for $(P_3,C)$, and hence, we can calculate the polynomial directly to see that 
$$\langle P_3 : C \rangle_2(z) = z^{-2} - z^{-1} + 1 + z^3.$$
\noindent The polynomials for the two perfect matchings are different, and therefore the perfect matchings are different.  The polynomials depend upon the perfect matching. It is the goal of this paper to partially characterize that dependence.

The $2$-factor polynomial incorporates some of the most interesting results to  come out of knot theory in the past thirty years together with  counting techniques important to graph theory.  By evaluating the polynomials at 1, we see that ${\langle P_3 : L \rangle_2(1)}=0$ and $\langle P_3 : C \rangle_2(1)=2$.  The main theorem of this paper says that these numbers are the count of $2$-factors that contain  $L$ and $C$ respectively.  To describe the main theorem, we need to introduce some notions related to factorability.  

Recall that a $k$-factor of $G$ is a spanning $k$-regular subgraph of $G$.  For example, the vertex set $V(G)$ is a $0$-factor, and a perfect matching (with vertices) is a $1$-factor.  A $2$-factor is a disjoint set of a cycles that span the vertex set.  When $G$ is also regular, it is natural to count the number of $k$-factors ``between'' an $\ell$-factor $M$ of $G$ and $G$, i.e., the index of $k$-factors that contain $M$ and are contained in $G$.  

\begin{definition}
Given an $n$-regular graph $G$ and an $\ell$-factor $M$ of $G$, then a $k$-factor $K$ \emph{factors through} $M$ if $M$ is a subgraph of $K$. Denote the set of all $k$-factors of $G$ that factor through $M$ by $[G:M]$.  For a specific $k$, denote the set of all $k$-factors of $G$ that factor through $M$ by $[G:M]_k$ and denote the number of elements of $[G:M]_k$ by the index $|G:M|_k$.
\end{definition}

Our main theorem in this paper proves Conjecture 1.2 of \cite{BaldCohomology} and says that the $2$-factor polynomial counts the number of $2$-factors that factor through the perfect matching:

\begin{theorem}
\label{thm:MainTheorem}
Let $G$ be a planar trivalent graph with perfect matching $M$. Evaluating the $2$-factor polynomial of $(G,M)$ at $1$ counts the number of elements of $[G:M]_2$, i.e.,
$$\langle G:M\rangle_2(1) = |G:M|_2.$$
\end{theorem}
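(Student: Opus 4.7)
My plan is to unpack the state sum for $\langle G:M\rangle_2(1)$ and reorganize it as a signed count indexed by $\{0,1\}$-colorings of the non-matched edges of $G$. By the recursive rules of Figure~\ref{figure:2factordef},
$$\langle G:M\rangle_2(1) = \sum_s (-1)^{|s|}\, 2^{\ell(s)},$$
where $s$ runs over the $2^{|M|}$ resolution states, $|s|$ counts cross-resolutions, and $\ell(s)$ counts circles. Since $2^{\ell(s)}$ is the number of $\{0,1\}$-colorings of the circles and such a coloring is the same as a $\{0,1\}$-coloring $\phi$ of non-matched edges that is compatible with the arcs of $s$, I can swap the order of summation:
$$\langle G:M\rangle_2(1) = \sum_{\phi} \Bigl(\sum_{s\text{ admissible with }\phi} (-1)^{|s|}\Bigr).$$

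A local analysis at each matched edge $uv$ — writing $a,b$ for the non-matched edges at $u$ and $c,d$ for those at $v$, with $\{a,c\}$, $\{b,d\}$ the same-side pairs determined by the planar embedding — shows the inner sum factors as a product $\prod_i w_i(\phi)$ of local contributions $w_i\in\{-1,0,+1\}$, and that $w_i\ne 0$ iff $\{\phi(a),\phi(b)\}=\{0,1\}$ and $\{\phi(c),\phi(d)\}=\{0,1\}$. In that case the unique admissible resolution is open when $\phi(a)=\phi(c)$ (giving $w_i=+1$) and cross when $\phi(a)\ne\phi(c)$ (giving $w_i=-1$). Thus $\phi$ contributes only when it is a proper edge $2$-coloring of $H:=G\setminus M$. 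Since $H$ is $2$-regular, a proper edge $2$-coloring exists iff every cycle of $H$ has even length, and then there are $2^c$ of them (with $c$ the number of cycles of $H$); via $\phi\mapsto M\cup\phi^{-1}(0)$ these are in bijection with the $2$-factors of $G$ through $M$, so $|G:M|_2=2^c$. In particular, in the odd-cycle case $\langle G:M\rangle_2(1)=0=|G:M|_2$ is immediate.

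Assume now every cycle of $H$ is even; it remains to show $\prod_i w_i(\phi)=+1$ for each proper $\phi$. First I would establish invariance: swapping the two colors along any single cycle $C$ of $H$ flips $w_i$ at exactly the matched edges with one endpoint in $V(C)$ and one outside, and the identity $|V(C)|=2k+m$ (with $k$ the matched edges having both endpoints in $V(C)$ and $m$ those across its boundary) forces $m$ to be even when $|V(C)|$ is even. Since any two proper $\phi$'s differ by such swaps, $\prod_i w_i(\phi)$ is constant in $\phi$, and it suffices to compute it for a single $\phi$. Fix the associated $2$-factor $K=M\cup\phi^{-1}(0)$ and decompose $\prod_i w_i(\phi)=\prod_j\sigma(K_j)$ over the embedded even cycles $K_j$ of $K$, where $\sigma(K_j)=(-1)^{N_j}$ and $N_j$ counts the matched edges of $K_j$ at which the cycle \emph{twists} — i.e., exits at $v$ on the global side of the matched edge opposite to where it entered at $u$.

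The hardest step is to prove $N_j$ is even for every embedded even cycle $K_j$, and this is where planarity is essential. My preferred approach is a turning-number calculation: at each vertex of $K_j$, read off the exterior angle determined by the cyclic order of the three incident edges; since $K_j$ is a simple closed curve in the plane, the Hopf Umlaufsatz makes these angles sum to $\pm2\pi$, and a case analysis of ``CCW- versus CW-adjacent non-matched edge at each vertex'' relates this sum modulo $2\pi$ to the parity of $N_j$, forcing $N_j$ to be even. A combinatorial alternative is an inductive planar-surgery argument on $K_j$ that shrinks an innermost face of the planar subgraph bounded by $K_j$ to a strictly shorter even cycle preserving $\sigma$, reducing to the base cases $\theta$ and $K_4$ computed earlier in the paper. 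Either way, once $\sigma(K_j)=+1$ is established for each $j$, multiplying over $j$ and summing over the $2^c$ proper $\phi$'s yields $\langle G:M\rangle_2(1)=2^c=|G:M|_2$.
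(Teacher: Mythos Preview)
Your approach is correct and genuinely different from the paper's. The paper proceeds by showing that both $\langle G:M\rangle_2(1)$ and $|G:M|_2$ satisfy the same $IH$-relation and bubble relation, then uses a structural lemma (Lemma~\ref{lemma:DiskLemma}) to show every bridgeless $(G,M)$ is $IH$-equivalent to one in which $G'\setminus M'$ has a cycle of length at most $3$; the proof finishes by induction on $|M|$ together with a direct $3$-cycle computation (Lemma~\ref{lem:TriangleLemma}). Your route---rewriting $\langle G:M\rangle_2(1)=\sum_s(-1)^{|s|}2^{\ell(s)}$ as a sum over proper $2$-edge-colourings $\phi$ of $H=G\setminus M$ and then showing each contributes $+1$---bypasses the $IH$-machinery entirely. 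What the paper's method buys is a toolkit ($IH$-relations, bubble lemma) reusable for other questions about the polynomial and its categorification; what yours buys is a single, conceptually uniform computation that isolates exactly where planarity enters.

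That said, the step you flag as hardest is genuinely the crux, and neither of your two sketches is convincing as written. The Umlaufsatz idea is problematic because the exterior angles at the vertices of $K_j$ are real numbers depending on the particular embedding, not elements of $\{\pm 1\}$, so ``sum modulo $2\pi$'' does not obviously read off the parity of $N_j$ without a further geometric normalisation you have not supplied. The inductive surgery alternative is even vaguer: you would need to specify the surgery, check it preserves $\sigma(K_j)$, and verify it terminates.

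There is, however, a very short argument that replaces both sketches. Observe that $|s(\phi)|$ is exactly the number of immersed crossings in the state $s(\phi)$, and that at each such crossing the two local strands carry \emph{different} colours (one is a $0$-arc, the other a $1$-arc), since a cross-resolution at a matched edge $uv$ joins the colour-$0$ edge at $u$ to the colour-$0$ edge at $v$ across the colour-$1$ arc. Hence every circle of $s(\phi)$ is monochromatic and \emph{embedded} (no self-crossings), and every crossing is a transverse intersection of a colour-$0$ Jordan curve with a colour-$1$ Jordan curve. Two Jordan curves in the sphere meet transversally in an even number of points, so $|s(\phi)|=\sum_{C_0,C_1}|C_0\pitchfork C_1|$ is even. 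This gives $\prod_i w_i(\phi)=(-1)^{|s(\phi)|}=+1$ directly, without needing the cycle-by-cycle decomposition or the preliminary invariance step (which, while correct, is then unnecessary). With this replacement your proof is complete.
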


Looking back at the circular ladder examples, one can check by hand that $|P_3 : L|_2 = 0$, while $|P_3 : C|_2 = 2$, just as the theorem says.  Moreover, notice that in this example, $P_3 \setminus L$ has two $3$-cycles, while $P_3\setminus C$ contains a single $4$-cycle.  (Here and throughout we think of a $1$-factor as either a subgraph or a set of perfect matching edges, depending on context.) 

The graph $G\setminus M$ is always a set of disjoint cycles $\{C_1, C_2, \ldots, C_k\}$, and  the number of edges in each $C_i$ provides important information about $|G:M|_2$.  Consider one of these cycles, $C_i$, and note that every cycle of a $2$-factor that factors through $M$ must enter $C_i$ through a perfect matching edge of $M$ and then leave $C_i$ through a different perfect matching edge of $M$. Thus, any $2$-factor that factors through $M$ must span an even number of vertices of $C_i$.  If any cycle $C_i$ in $G\setminus M$ has an odd number of edges, i.e., if $C_i$ is an {\em odd cycle}, then $|G:M|_2 = 0$. 

On the other hand, if $C_i$ has an even number of edges for all $i$, then every $2$-factor that factors through $M$ consists of $M$ and a perfect matching for each $C_i$.  This is because every other edge of $C_i$ must be included in the $2$-factor.   Since each $C_i$ has two perfect matchings, there are $2^k$ choices.  Thus we obtain the following:

\begin{proposition}
\label{prop:count}
Let $G$ be a connected planar trivalent graph with perfect matching $M$ such that $G\setminus M$ is a set of $k$ disjoint cycles. If any cycle in this set is odd, then $|G:M|_2=0$. If all cycles are even, then $|G:M|_2= 2^k$.
\end{proposition}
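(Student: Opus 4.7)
The plan is to formalize the observations sketched in the paragraphs immediately preceding the proposition. First I would record the consequence of trivalency: because every vertex of $G$ has degree $3$ and lies on exactly one edge of the $1$-factor $M$, the complement $G \setminus M$ is a $2$-regular spanning subgraph of $G$, so it decomposes into the disjoint cycles $C_1, \ldots, C_k$ given in the hypothesis.

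The key step is a bijection between $[G:M]_2$ and the set of $1$-factors of $G \setminus M$. If $K$ is a $2$-factor with $M \subseteq K$, then at each vertex the edge of $M$ already contributes $1$ to the degree, so the remaining edges $K \setminus M$ must form a subgraph of $G \setminus M$ that is $1$-regular on $V(G)$, i.e., a $1$-factor of $G \setminus M$. Conversely, given any $1$-factor $N$ of $G \setminus M$, the union $M \cup N$ is a spanning $2$-regular subgraph of $G$ (hence a $2$-factor) that visibly factors through $M$. So $|G:M|_2$ equals the number of $1$-factors of the disjoint union $C_1 \sqcup \cdots \sqcup C_k$.

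The final step is the elementary observation that a $1$-factor of $C_1 \sqcup \cdots \sqcup C_k$ is the same data as a choice of perfect matching on each $C_i$ independently. A cycle has a perfect matching if and only if its length is even, and an even cycle has exactly two perfect matchings (the two alternating edge sets). Hence if any $C_i$ is odd there is no $1$-factor of $G \setminus M$ and $|G:M|_2 = 0$, while if every $C_i$ is even each of the $k$ cycles independently contributes a factor of $2$, giving $|G:M|_2 = 2^k$.

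The argument is essentially bookkeeping once the bijection is in place, so there is no real obstacle; the only point meriting care is the verification that both directions of the bijection are well-defined (spanning, $2$-regular, and supported in the correct edge set), which I would make explicit to avoid any ambiguity about counting matchings in $G \setminus M$ versus $2$-factors of $G$.
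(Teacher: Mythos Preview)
Your proof is correct and follows essentially the same approach as the paper: the paragraphs preceding the proposition already identify a $2$-factor $K$ factoring through $M$ with $M$ together with a perfect matching of each cycle $C_i$, and you have simply made that bijection $K \leftrightarrow K\setminus M$ explicit via the degree count. The only cosmetic difference is that the paper handles the odd-cycle case with an ``enter/leave along $C_i$'' picture rather than your direct degree argument, but both arguments encode the same parity obstruction.
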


Theorem \ref{thm:MainTheorem} and Propostion \ref{prop:count} taken together imply that the $2$-factor polynomial is $0$ whenever the graph $G\setminus M$ contains a cycle with an odd number of edges, and it is a power of two precisely when all cycles are even. Thus we immediately obtain:

\begin{corollary}
\label{odd}
Let $G$ be a planar trivalent graph with perfect matching
$M$. If any of the cycles in $G(V, E \setminus M)$ have an odd number of edges, then the $2$-factor polynomial
for $(G, M)$ satisfies
$${\langle {G:M}\rangle}_2(1)=0.$$
In particular, if $\langle {G:M} \rangle_2 (1) > 0$ for the pair $(G,M)$, then $M$ is an even perfect matching of $G$.
\end{corollary}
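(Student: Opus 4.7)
The plan is to derive this corollary as an immediate consequence of Theorem \ref{thm:MainTheorem} together with Proposition \ref{prop:count}, both of which are already stated. The corollary makes no new combinatorial assertion; it merely repackages these two results, so the proof should amount to a short logical chain rather than any new construction.

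First, I would invoke Theorem \ref{thm:MainTheorem} to replace the analytic quantity $\langle G:M\rangle_2(1)$ by the combinatorial quantity $|G:M|_2$, the number of $2$-factors of $G$ that factor through $M$. This reduces the claim to a statement about counting $2$-factors. Next, assume some cycle $C_i$ in $G \setminus M$ has odd length. As observed in the discussion preceding Proposition \ref{prop:count}, any $2$-factor factoring through $M$ must, on each $C_i$, be completed by taking every other edge of $C_i$; this is impossible when $C_i$ has odd length. Proposition \ref{prop:count} then yields $|G:M|_2 = 0$, and combining with the previous step gives $\langle G:M\rangle_2(1) = 0$, establishing the first assertion of the corollary.

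For the ``in particular'' statement, I would argue by contrapositive. An \emph{even perfect matching} is one for which every cycle of $G \setminus M$ has even length, so if $M$ fails to be even then $G \setminus M$ contains at least one odd cycle, and the argument just given forces $\langle G:M\rangle_2(1) = 0$. Equivalently, $\langle G:M\rangle_2(1) > 0$ precludes any odd cycle in $G \setminus M$, which is precisely the condition that $M$ be an even perfect matching of $G$.

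There is no real obstacle here: the corollary is built to follow directly from Theorem \ref{thm:MainTheorem} and Proposition \ref{prop:count}. The only point that requires care is matching the definition of ``even perfect matching'' to the condition ``every component of $G \setminus M$ is an even cycle,'' and verifying that the implication is used in the correct direction. All of the genuine work has already been absorbed into the two results cited.
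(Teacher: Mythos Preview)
Your proposal is correct and matches the paper's own treatment: the paper states the corollary as an immediate consequence of Theorem~\ref{thm:MainTheorem} and Proposition~\ref{prop:count}, exactly as you argue. There is nothing to add.
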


The existence of an even perfect matching for a planar trivalent graph implies that there is a Tait coloring for the graph, i.e., it is $3$-edge colorable (cf. \cite{KauffmanStateCalc}).  In fact, Theorem~\ref{thm:MainTheorem}  implies more. Pick three colors $\{\mf{i},\mf{j},\mf{k}\}$ to label edges. By Theorem~\ref{thm:MainTheorem}, $\langle {G:M} \rangle_2(1)$ counts the number of $2$-factors that factor through $M$.  For each such $2$-factor, label the perfect matching edges of the $2$-factor by $\mf{i}$ and the remaining edges of the $2$-factor by $\mf{j}$.  Label the edges in the complement of the $2$-factor by $\mf{k}$.  Thus, $\langle {G:M} \rangle_2(1)$ counts the number of Tait colorings of $G$ where the edges of $M$ are colored by $\mf{i}$. Thus, summing up over all perfect matchings counts the total number of Tait colorings of $G$.  This motivates the  definition:

\begin{definition} Let $G$ be a connected planar trivalent graph.  Define the {\em planar Tait polynomial of $G$} to be
$$T_G(z) = \sum_{M\in[G:V(G)]_1} \langle G:M \rangle_2(z).$$
\end{definition}

\noindent (Recall that $[G:V(G)]_1$ is the set of all $1$-factors of $G$.) The polynomial $T_G$ is an invariant of $G$ (cf.  \cite{BaldCohomology}).  In particular, one can prove the following about $T_G$:

\begin{theorem}
If $G$ is a connected planar trivalent graph, then $T_G(1)$ counts the number of Tait colorings of $G$, i.e.,
$$T_G(1) = \#|\text{Tait colorings of }G|.$$
\end{theorem}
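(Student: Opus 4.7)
The plan is to combine Theorem~\ref{thm:MainTheorem} with a straightforward bijection argument. By the definition of $T_G$ and linearity of evaluation at $z=1$,
$$T_G(1) = \sum_{M \in [G:V(G)]_1} \langle G:M \rangle_2(1),$$
and Theorem~\ref{thm:MainTheorem} replaces each summand by $|G:M|_2$. Thus $T_G(1)$ equals the total number of pairs $(M, K)$ where $M$ is a perfect matching of $G$ and $K$ is a $2$-factor of $G$ containing $M$. The goal is then to biject this set of pairs with the set of Tait colorings of $G$.

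In the forward direction, given such a pair $(M,K)$, I would assign colors to the edges of $G$ as suggested in the paragraph preceding the definition of $T_G$: color the edges of $M$ with $\mathfrak{i}$, the edges of $K \setminus M$ with $\mathfrak{j}$, and the edges of $E(G) \setminus K$ with $\mathfrak{k}$. Using trivalence at each vertex $v$, one edge of $M$ meets $v$, two edges of $K$ meet $v$ (so exactly one edge of $K \setminus M$ meets $v$), and the remaining third edge at $v$ lies in $E(G) \setminus K$. Hence the three color classes are perfect matchings partitioning $E(G)$, i.e., a Tait coloring.

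In the reverse direction, given a Tait coloring with color classes $M_\mathfrak{i}, M_\mathfrak{j}, M_\mathfrak{k}$, each class is a perfect matching. Setting $M := M_\mathfrak{i}$ and $K := M_\mathfrak{i} \cup M_\mathfrak{j}$ produces a perfect matching $M$ and a subgraph $K$ in which every vertex has degree $2$, so $K$ is a $2$-factor with $M \subseteq K$. The two constructions are evidently inverse to each other, so the bijection is established, and the theorem follows by summing.

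I do not expect any serious obstacle here: Theorem~\ref{thm:MainTheorem} is doing the heavy lifting, and the only substantive point to verify in the bijection is that trivalence forces $E(G) \setminus K$ to be a perfect matching whenever $K$ is a $2$-factor — which is a one-line degree count. The main subtlety is keeping the colors labeled (ordered), so that each Tait coloring corresponds to exactly one pair $(M,K)$ with $M$ chosen to be the $\mathfrak{i}$-colored class; this matches the convention implicit in the statement and the discussion immediately preceding the definition of $T_G$.
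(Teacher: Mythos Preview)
Your proposal is correct and follows essentially the same approach as the paper: the paper's argument is the informal paragraph immediately preceding the definition of $T_G$, which uses Theorem~\ref{thm:MainTheorem} and the same coloring bijection $(M,K)\mapsto(\mf{i},\mf{j},\mf{k})$ that you spell out. You have simply made the bijection and its inverse explicit, which the paper leaves to the reader.
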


For example, the theta graph $\theta$ has three perfect matchings, one for each edge of the graph.  The $2$-factor polynomial is the same for each perfect matching, thus
$$T_\theta(z)= 3z^{-2}+3.$$
 There are six Tait colorings of the $\theta$ graph, and the above computation of $T_\theta(z)$ implies $T_{\theta}(1)=6$.

Another example that we can calculate is $P_3$.  There is one perfect matching corresponding to the ladder $L$ and three perfect matchings corresponding to $C$ and two $120^\circ$ rotations of $C$. Therefore, 
$$T_{P_3}(z)=(z^{-3}-z^2+z^3-z^4)+3(z^{-2}-z^{-1}+1+z^3),$$
and $T_{P_3}(1)=6$, which is the number of Tait colorings of $P_3$. Note that the perfect matching $L$ does not contribute any colorings.

A planar trivalent graph $G$ has a Tait coloring if and only if its faces are $4$-colorable (cf. \cite{KauffmanStateCalc}, \cite{Heawood}, \cite{Kempe}). Hence, the four color theorem \cite{AppelHaken, AppelHaken2, RSST} is equivalent to the following theorem. For other examples of results equivalent to the four color theorem, see \cite{BN2} and \cite{Kauffman3}.

\begin{theorem}
\label{theorem:2FactorPoly}
If $G$ is a connected, bridgeless, planar trivalent graph, then $T_G(1)>0$.
\end{theorem}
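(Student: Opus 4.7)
The plan is to combine two inputs: the preceding theorem asserting that $T_G(1)$ equals the number of Tait colorings of $G$, and the four color theorem in its Tait-coloring formulation. The preceding theorem is already in hand (it follows from Theorem~\ref{thm:MainTheorem} together with the bijection, described in the paragraph between Corollary~\ref{odd} and the definition of $T_G$, that pairs $(M,K)$ consisting of a perfect matching $M$ and a $2$-factor $K$ factoring through $M$ correspond to Tait colorings by labeling $M$ with $\mathfrak{i}$, the remaining edges of $K$ with $\mathfrak{j}$, and $E(G)\setminus K$ with $\mathfrak{k}$). So the statement reduces to showing that every connected, bridgeless, planar trivalent graph admits at least one Tait coloring.

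First I would reduce $T_G(1)>0$ to the existence of a Tait coloring: by the previous theorem, $T_G(1)=\#|\text{Tait colorings of }G|$, so it suffices to exhibit one Tait coloring. Second, I would invoke the classical equivalence (due to Tait, as referenced via \cite{KauffmanStateCalc}, \cite{Heawood}, \cite{Kempe}): a connected bridgeless planar trivalent graph is $3$-edge-colorable if and only if its faces are $4$-colorable. Third, I would apply the four color theorem \cite{AppelHaken, AppelHaken2, RSST} to the planar map determined by $G$ (whose face structure is well-defined since $G$ is connected, bridgeless, and planar), producing a proper $4$-coloring of the faces. Translating back via Tait's equivalence gives a Tait coloring of $G$, and therefore $T_G(1)\geq 1>0$.

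The only substantive step is the application of the four color theorem; every other step is essentially a packaging of already-established results in the paper. In particular, I would not attempt a proof independent of the four color theorem — indeed, the paragraph preceding the statement explicitly asserts that Theorem~\ref{theorem:2FactorPoly} is \emph{equivalent} to the four color theorem, so any hope of a shorter independent argument would amount to a new proof of the four color theorem. The main obstacle, therefore, is not logical but expository: one must be careful to verify the hypotheses of Tait's equivalence (connected, bridgeless, planar, trivalent) match exactly those of the statement, and that the face structure used by the four color theorem is the one induced by the planar embedding underlying the perfect matching drawings used throughout the paper.
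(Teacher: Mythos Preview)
Your proposal is correct and matches the paper's approach exactly: the paper does not give an independent proof but simply observes, in the paragraph preceding the statement, that Theorem~\ref{theorem:2FactorPoly} is equivalent to the four color theorem via Tait's equivalence between $3$-edge-colorability and $4$-face-colorability, together with the preceding theorem that $T_G(1)$ counts Tait colorings. Your write-up spells out these implications a bit more carefully than the paper does, but the logical content is identical.
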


The planar Tait polynomial is a sum of $2$-factor polynomials. It is the authors' hope that the $2$-factor polynomial or its categorification from \cite{BaldCohomology} may shed light on the number of Tait colorings of a connected planar trivalent graph.

\section{Preliminaries}

The proof of Theorem \ref{thm:MainTheorem} rests upon the use of $IH$-moves to transform a given bridgeless, planar trivalent graph with a perfect matching $(G,M)$ into one where we know Theorem \ref{thm:MainTheorem} holds.   

\begin{definition}
\label{def:IHMove}
An $IH$-move is a local replacement of the configuration shown on the left in Figure \ref{figure:IH} with the one shown on the right.
\end{definition}

\begin{figure}[h]
$$\begin{tikzpicture}[scale = 0.8]

\draw[dashed] (0,0.5) circle (0.8 cm);

\draw[ultra thick] (0,0) -- (0,1);
\fill (0,0) circle (3pt);
\fill (0,1) circle (3pt);
\draw (-.4,-.2) -- (0,0) -- (.4,-.2);
\draw (-.4,1.2) -- (0,1) -- (.4,1.2);

\begin{scope}[xshift = 1 cm]
\draw[->] (.5,.6) -- (1,.6);
\draw[<-] (.5,.4) -- (1,.4);
\end{scope}

\begin{scope}[xshift = 2cm]

\draw[dashed] (1.8,0.5) circle (0.8 cm);

\draw (1.1,.9) -- (1.3,.5) -- (1.1,.1);
\fill (1.3,.5) circle (3pt);
\fill (2.3,.5) circle (3pt);
\draw [ultra thick] (1.3,.5) -- (2.3,.5);
\draw (2.5,.9) -- (2.3,.5) -- (2.5,.1);
\end{scope}

\end{tikzpicture}$$
\caption{An IH-move.}
\label{figure:IH}
\end{figure}
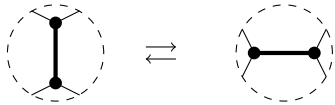

Note that we only perform an $IH$-move on a perfect matching edge.

\begin{definition}
\label{def:IHEquivalence}
Given two graphs, $G$ and $G'$, and perfect matchings for each $M$ and $M'$ respectively, we say that $(G,M)$ is $IH$-equivalent to $(G',M')$ if there is a sequence of $IH$-moves taking one to the other.
\end{definition}

Before we proving the main theorem, we need to establish a series of lemmas about $IH$-moves.

\begin{lemma}
\label{lem:IHLemma}
The $2$-factor polynomial and the number of $2$-factors of $G$ that factor through $M$ satisfy the following \textit{IH-relations}:

\begin{alignat}{7}
\tikz [thick, scale = .8, baseline=(current bounding box.center)]{\draw[ultra thick] (0,0) -- (0,1);
\fill (0,0) circle (3pt);
\fill (0,1) circle (3pt);
\draw (-.4,-.2) -- (0,0) -- (.4,-.2);
\draw (-.4,1.2) -- (0,1) -- (.4,1.2);
\draw (-.4,-.4) -- (-.8,.5) -- (-.4,1.4);
\draw (.4,-.4) -- (.8,.5) -- (.4,1.4);
\draw (.7,-.3) node{$2$};}
&  - & ~
\tikz[thick, scale = .8, baseline=(current bounding box.center)]{\draw (-0.5,.9) -- (-0.3,.5) -- (-0.5,.1);
\fill (-0.3,.5) circle (3pt);
\fill (0.7,.5) circle (3pt);
\draw [ultra thick] (-0.2,.5) -- (0.8,.5);
\draw (0.9,.9) -- (0.7,.5) -- (0.9,.1);
\draw (-.4,-.4) -- (-.8,.5) -- (-.4,1.4);
\draw (.8,-.4) -- (1.2,.5) -- (.8,1.4);
\draw (1.1,-.3) node{$2$};}
& = & ~
\tikz[thick, scale = .8, baseline=(current bounding box.center)]{
\fill[white!70!black] (.12,.6) to 
	(-.12,.6) to [out = -60, in = 90] 
	(-.1,.5) to [out = -90, in = -120] 
	(-.12,.4) to 
	(.12,.4) to [out = 120, in = -90]
	(.1,.5) to [out = 90, in = -120]
	(.12,.6);
	\draw (-.4,-.4) -- (-.8,.5) -- (-.4,1.4);
	\draw (.4,-.4) -- (.8,.5) -- (.4,1.4);
	\draw (.7,-.3) node{$2$};
	\draw (-.4,-.2) to [out = 60, in = -90] (-.1,.5) to [out = 90, in = -60] (-.4,1.2);
	\draw (.4,-.2) to [out = 120, in = -90] (.1,.5) to [out = 90, in = -120] (.4,1.2);}
& - & ~ 
\tikz[thick, scale = .8, baseline=(current bounding box.center)]{\draw (-.6,-.4) -- (-1,.5) -- (-.6,1.4);	
	\fill[white!70!black] 
	(.1,.6) to (-.1,.6) to (-.1,.4) to 
	(.1,.4) to (.1,.6);
	\draw (-.7,0.9) to [out = -30, in = 180] (0,.6) to [out = 0, in = 210] (0.7,0.9);
	\draw (-.7,0.1) to [out = 30, in = 180] (0,0.4) to [out = 0, in = 150] (0.7,0.1);
	\draw (.6,-.4) -- (1,.5) -- (.6,1.4);
	\draw (.9,-.3) node{$2$};
}
\label{eqn:IHPoly}\\
\tikz[thick, scale = .8, baseline=(current bounding box.center)]{
\draw[ultra thick] (0,0) -- (0,1);
\fill (0,0) circle (3pt);
\fill (0,1) circle (3pt);
\draw (-.4,-.2) -- (0,0) -- (.4,-.2);
\draw (-.4,1.2) -- (0,1) -- (.4,1.2);
\draw (-.5,-.4) -- (-.5,1.4);
\draw (.5,-.4) -- (.5,1.4);
\draw (.8,-.3) node{$2$};}
& - & \hspace{.2cm}
\tikz[thick, scale = .8, baseline=(current bounding box.center)]{\draw (-0.5,.9) -- (-0.3,.5) -- (-0.5,.1);
\fill (-0.3,.5) circle (3pt);
\fill (0.7,.5) circle (3pt);
\draw [ultra thick] (-0.2,.5) -- (0.8,.5);
\draw (0.9,.9) -- (0.7,.5) -- (0.9,.1);
\draw (-.6,-.4) -- (-.6,1.4);
\draw (1,-.4)  -- (1,1.4);
\draw (1.3,-.3) node{$2$};}
&= & ~
\tikz[thick, scale = .8, baseline=(current bounding box.center)]{\fill[white!70!black] (.12,.6) to 
	(-.12,.6) to [out = -60, in = 90] 
	(-.1,.5) to [out = -90, in = -120] 
	(-.12,.4) to 
	(.12,.4) to [out = 120, in = -90]
	(.1,.5) to [out = 90, in = -120]
	(.12,.6);
	\draw (-.5,-.4) -- (-.5,1.4);
	\draw (.5,-.4) -- (.5,1.4);
	\draw (.8,-.3) node{$2$};
	\draw (-.4,-.2) to [out = 60, in = -90] (-.1,.5) to [out = 90, in = -60] (-.4,1.2);
	\draw (.4,-.2) to [out = 120, in = -90] (.1,.5) to [out = 90, in = -120] (.4,1.2);}
&-& ~
\tikz[thick, scale = .8, baseline=(current bounding box.center)]{	\draw (-.8,-.4) -- (-.8,1.4);
	\fill[white!70!black] 
	(.1,.6) to (-.1,.6) to (-.1,.4) to 
	(.1,.4) to (.1,.6);
	\draw (-.7,0.9) to [out = -30, in = 180] (0,.6) to [out = 0, in = 210] (0.7,0.9);
	\draw (-.7,0.1) to [out = 30, in = 180] (0,0.4) to [out = 0, in = 150] (0.7,0.1);
	\draw (.8,-.4) -- (.8,1.4);
	\draw (1.1,-.3) node{$2$};}
	\label{eqn:IH2Factor}
\end{alignat}

\end{lemma}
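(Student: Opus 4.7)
The plan is to prove both equations in parallel: they differ only in how the strands outside the $IH$-region are drawn, and the same argument handles either case. The polynomial half is essentially mechanical from the bracket recursion; the count half requires a case analysis on the local structure of a $2$-factor near the $IH$-region.

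For the $2$-factor polynomial, I would apply the first relation of Figure~\ref{figure:2factordef} to the matching edge on each side of the equation. Let $I_0, I_1$ denote the $)($ and $\bigtimes$ resolutions of the $I$-configuration, and $H_0, H_1$ the corresponding resolutions of the $H$-configuration. The bracket rule gives
\begin{equation*}
\langle I \rangle_2 = \langle I_0 \rangle_2 - z \langle I_1 \rangle_2 \quad\text{and}\quad \langle H \rangle_2 = \langle H_0 \rangle_2 - z \langle H_1 \rangle_2.
\end{equation*}
The key observation is that $I_1$ and $H_1$ are literally the \emph{same} diagram: in both configurations the matching edge is replaced by a crossing that joins the upper-left stub to the lower-right stub and the upper-right stub to the lower-left stub. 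Subtracting cancels the $z\langle I_1\rangle_2$ and $z\langle H_1\rangle_2$ terms and leaves precisely the right-hand sides of (\ref{eqn:IHPoly}) and (\ref{eqn:IH2Factor}).

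For the $2$-factor count, let $u,v$ denote the endpoints of the matching edge $e$ in the $I$-configuration, with non-matching edges $e_1, e_2$ at $u$ and $e_3, e_4$ at $v$; let $u', v'$ denote the endpoints of the matching edge $e'$ after the $IH$-move. Any $2$-factor through $M$ must contain $e$ and, by degree counting at $u$ and $v$, exactly one edge from $\{e_1, e_2\}$ and one from $\{e_3, e_4\}$. This partitions $[G_I:M]_2$ into four sets $S_A, S_B, S_C, S_D$ indexed by the chosen pair. Analogously, $[G_H:M_H]_2$ partitions into four sets $S_E, S_F, S_G, S_H$ indexed by the choices at $u'$ (between $e_1$ and $e_3$) and at $v'$ (between $e_2$ and $e_4$). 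The sets $S_B$ and $S_F$ each consist of $2$-factors whose trace through the $IH$-region is the path $e_1\!-\!e_4$; since the graph outside this region is unchanged by the $IH$-move, restricting a $2$-factor to the complement gives a bijection $S_B \leftrightarrow S_F$, and likewise $S_C \leftrightarrow S_G$. Therefore $|G_I:M|_2 - |G_H:M_H|_2 = (|S_A|+|S_D|) - (|S_E|+|S_H|)$.

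To identify this difference with $|G_{I_0}:M\setminus\{e\}|_2 - |G_{H_0}:M_H\setminus\{e'\}|_2$, I would split $[G_{I_0}:M\setminus\{e\}]_2$ according to which of the two smoothed arcs $e_{13}, e_{24}$ lie in the $2$-factor. The subcases ``only $e_{13}$'' and ``only $e_{24}$'' biject with $S_A$ and $S_D$, while ``both'' and ``neither'' biject with $2$-regular subgraphs of the common complement $G^{\ast} := G_I \setminus \{u,v\} = G_H \setminus \{u',v'\}$ that contain $M \setminus \{e\}$ and have prescribed degrees at the external neighbors $w_1, w_2, w_3, w_4$ of $u, v$. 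The parallel split of $[G_{H_0}:M_H\setminus\{e'\}]_2$ by $e_{12}, e_{34}$ yields $S_E$, $S_H$, and the same two ``both/neither'' counts (each described intrinsically in terms of $G^{\ast}$), so those extra contributions cancel in the difference. The main obstacle will be the bookkeeping for the ``both'' and ``neither'' subcases; the cleanest tactic is to describe all contributions as near-$2$-factors of $G^{\ast}$ with specified boundary behavior, which makes the required bijections transparent.
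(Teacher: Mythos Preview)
Your polynomial argument is correct and coincides with the paper's: both observe that the cross resolution of the $I$- and $H$-configurations is the same diagram, so subtracting the two bracket expansions cancels the $-z\langle\,\cdot\,\rangle_2$ terms and leaves exactly the difference of open resolutions.

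Your argument for the $2$-factor count is also correct, but it is genuinely different from the paper's. The paper does not build local bijections at all; instead it invokes Proposition~\ref{prop:count}, which says $|G:M|_2$ is either $0$ or $2^k$ depending on the parities of the cycles in $G\setminus M$. The proof then tracks how an $IH$-move changes the cycle decomposition of $G\setminus M$ (merging two cycles into one, or vice versa) and checks Equation~\eqref{eqn:IH2Factor} case by case on the parities of the cycle lengths involved. Your approach is more elementary in that it avoids Proposition~\ref{prop:count} entirely and works by partitioning each side into four local types and matching them up; the ``both/neither'' bookkeeping you flag is exactly the right subtlety, and your observation that those contributions are described by the same boundary-value problem on $G^\ast$ is what makes the cancellation go through. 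The paper's route is shorter once Proposition~\ref{prop:count} is in hand and makes the role of cycle parity explicit; your route is self-contained and bijective, and would survive in settings where a closed formula like Proposition~\ref{prop:count} is unavailable.
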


\begin{proof}
Observe that the first equation in Figure \ref{figure:2factordef} remains true when each term is rotated 90 degrees, as shown below. This new relation, subtracted from the the first equation in Figure \ref{figure:2factordef} yields Equation \ref{eqn:IHPoly}.

$$\begin{tikzpicture}[thick, scale = .8]
	
\begin{scope}[yshift = -2 cm]

\begin{scope}[xshift = -0.3cm]
\draw (-0.5,.9) -- (-0.3,.5) -- (-0.5,.1);
\fill (-0.3,.5) circle (3pt);
\fill (0.7,.5) circle (3pt);
\draw [ultra thick] (-0.2,.5) -- (0.8,.5);
\draw (0.9,.9) -- (0.7,.5) -- (0.9,.1);

\draw (-.4,-.4) -- (-.8,.5) -- (-.4,1.4);
\draw (.8,-.4) -- (1.2,.5) -- (.8,1.4);
\draw (1.1,-.3) node{$2$};
\end{scope}

\draw (1.5, .5) node{$=$};


\begin{scope}[xshift = 3.3 cm]

	\draw (-.6,-.4) -- (-1,.5) -- (-.6,1.4);
	
	\fill[white!70!black] 
	(.1,.6) to (-.1,.6) to (-.1,.4) to 
	(.1,.4) to (.1,.6);

	\draw (-.7,0.9) to [out = -30, in = 180] (0,.6) to [out = 0, in = 210] (0.7,0.9);
	\draw (-.7,0.1) to [out = 30, in = 180] (0,0.4) to [out = 0, in = 150] (0.7,0.1);

	\draw (.6,-.4) -- (1,.5) -- (.6,1.4);
	\draw (.9,-.3) node{$2$};

\end{scope}


\draw (5, .5) node{$-$};
\draw (5.5,.5) node{$z$};

\begin{scope}[xshift = 6.5 cm]
	\draw (-.4,-.4) -- (-.8,.5) -- (-.4,1.4);
	\draw (.4,-.4) -- (.8,.5) -- (.4,1.4);
	\draw (.7,-.3) node{$2$};
	\draw (-.4,-.2) -- (.4,1.2);
	\draw (-.4,1.2) -- (.4,-.2);

\end{scope}

\end{scope}

\end{tikzpicture}$$

The index $|{G:M}|_2$ of $2$-factors that factor through $M$ satisfies the same IH-relation (Equation~\ref{eqn:IH2Factor}). Label the four pictures of Equation \ref{eqn:IH2Factor} from left to right by $(G_i,M_i)$ for $i=1, 2 , 3,$ and $4$. Suppose that the non-perfect matching edges  in $(G_1,M_1)$ are part of the same cycle $C_1$ in $G_1(V_1,E_1\setminus M_1)$ as in Figure~\ref{fig:IHmovesequationexample}.  The non-perfect matching edges of the second picture of Equation \ref{eqn:IH2Factor} will then be parts of two different cycles $C_2$ and $\widehat{C}_2$ in $G_2(V_2,E_2\setminus M_2)$, and so on as displayed in Figure~\ref{fig:IHmovesequationexample}. Let $C_4$ be the cycle corresponding to $C_1$ in the fourth picture. Clearly the number of cycles in $G_1\setminus M_1$ is equal to the number of cycles in $G_4 \setminus M_4$, and since $C_4$ has two fewer edges than $C_1$, both cycles have the same parity.  Therefore, by Proposition~\ref{prop:count}, $|G_1:M_1|_2 = |G_4:M_4|_2$.

\begin{figure}[H]
$$\begin{tikzpicture}[scale=0.8]

\draw (0, .5) node{$(G_1, M_1)$};
\draw (3.7, .5) node{$(G_2, M_2)$};
\draw (7.4, .5) node{$(G_3, M_3)$};
\draw (11, .5) node{$(G_4, M_4)$};

\begin{scope}[yshift = -1.5cm]
\draw[ultra thick] (0,0) -- (0,1);
\fill (0,0) circle (3pt);
\fill (0,1) circle (3pt);
\draw (-.4,-.2) -- (0,0) -- (.4,-.2);
\draw (-.4,1.2) -- (0,1) -- (.4,1.2);

\draw[dashed] (-.4,-.2) to [out = 200, in = 270] (-1.5,0.5) to [out = 90, in = 160] (-.4,1.2);
\draw[dashed] (.4,-.2) to [out = -20, in = 270] (1.5,0.5) to [out = 90, in = 20] (.4,1.2);

\begin{scope}[xshift = 3.5cm]
	\draw (-0.5,.9) -- (-0.3,.5) -- (-0.5,.1);
	\fill (-0.3,.5) circle (3pt);
	\fill (0.7,.5) circle (3pt);
	\draw [ultra thick] (-0.2,.5) -- (0.8,.5);
	\draw (0.9,.9) -- (0.7,.5) -- (0.9,.1);

	\draw[dashed] (-0.5,0.9) to [out = 100, in = 90] (-1.5,0.5) to [out = 270, in = 260] (-0.5,0.1);
	\draw[dashed] (0.9,0.9) to [out = 80, in = 90] (1.9,0.5) to [out = 270, in = -80] (0.9,0.1);

\end{scope}

\begin{scope}[xshift = 7.4cm]

	\fill[white!70!black] (.12,.6) to 
	(-.12,.6) to [out = -60, in = 90] 
	(-.1,.5) to [out = -90, in = -120] 
	(-.12,.4) to 
	(.12,.4) to [out = 120, in = -90]
	(.1,.5) to [out = 90, in = -120]
	(.12,.6);

	\draw (-.4,-.2) to [out = 60, in = -90] (-.1,.5) to [out = 90, in = -60] (-.4,1.2);
	\draw (.4,-.2) to [out = 120, in = -90] (.1,.5) to [out = 90, in = -120] (.4,1.2);

	\draw[dashed] (-.4,-.2) to [out = 200, in = 270] (-1.5,0.5) to [out = 90, in = 160] (-.4,1.2);
	\draw[dashed] (.4,-.2) to [out = -20, in = 270] (1.5,0.5) to [out = 90, in = 20] (.4,1.2);

\end{scope}

\begin{scope}[xshift = 11cm]

	\fill[white!70!black] 
	(.1,.6) to (-.1,.6) to (-.1,.4) to 
	(.1,.4) to (.1,.6);

	\draw (-.7,0.9) to [out = -30, in = 180] (0,.6) to [out = 0, in = 210] (0.7,0.9);
	\draw (-.7,0.1) to [out = 30, in = 180] (0,0.4) to [out = 0, in = 150] (0.7,0.1);

	\draw[dashed] (-0.7,0.9) to [out = 150, in = 90] (-1.7,0.5) to [out = 270, in = 240] (-0.7,0.1);
	\draw[dashed] (0.7,0.9) to [out = 30, in = 90] (1.7,0.5) to [out = 270, in = -30] (0.7,0.1);
	
\end{scope}
\end{scope}

\begin{scope}[yshift = -2.7 cm]
\draw (0, .5) node{$C_1$};
\draw (2.8, .5) node{$C_2$};
\draw (4.7, .5) node{$\widehat{C_2}$};
\draw (6.8, .5) node{$C_3$};
\draw (8.2, .5) node{$\widehat{C_3}$};
\draw (11, .5) node{$C_4$};

\end{scope}
\end{tikzpicture}$$
\caption{An $IH$-move when the vertices of the perfect matching edge in $(G_1,M_1)$ are spanned by a single cycle, $C_1$.}
\label{fig:IHmovesequationexample}
\end{figure}
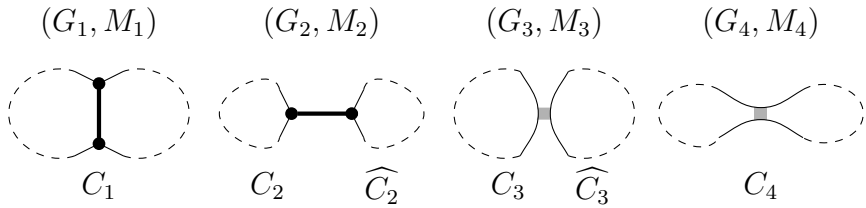

Suppose that $C_2$ and $\widehat{C}_2$ have $k$ and $\widehat{k}$ edges respectively. The corresponding cycles $C_3$ and $\widehat{C}_3$ have $k-1$ and $\widehat{k}-1$ edges respectively. If both $k$ and $\widehat{k}$ are even, then $k-1$ and $\widehat{k}-1$ are odd, implying that $|G_3:M_3|_2=0$. Also, in this case, $|G_2:M_2|_2=2|G_1:M_1|_2$.  Thus, Equation \ref{eqn:IH2Factor} is satisfied. If exactly one of $k$ and $\widehat{k}$ is odd, then all four terms of Equation \ref{eqn:IH2Factor} are zero. If both $k$ and $\widehat{k}$ are odd, then $|G_2:M_2|_2=0$ and $|G_3:M_3|_2= 2|G_1:M_1|_2$, and Equation \ref{eqn:IH2Factor} is again satisfied. 

Finally, if the non-perfect matching edges in the first picture of Equation \ref{eqn:IH2Factor}  lie in two different cycles in $G_1(V_1,E_1\setminus M_1)$, then the non-perfect matching edges in second picture of Equation \ref{eqn:IH2Factor} must be part of one cycle. Thus, the argument for this case is the same as the previous case with the pictures reversed.
\end{proof}

\begin{lemma}[Bubble Lemma]
\label{lem:bubblerelation}
The $2$-factor polynomial, when evaluated at $1$, and the index of $2$-factors that factor through $M$ both satisfy the following bubble relation:

$$\begin{tikzpicture}[scale = 0.7]
\begin{scope}[xshift= 0cm]
\draw[ultra thick] (0,-0.5) -- (0,0.25);
\draw[ultra thick] (0,0.75) -- (0,1.5);

\fill (0,-0.5) circle (3pt);
\fill (0,1.5) circle (3pt);
\fill (0,0.25) circle (3pt);
\fill (0,0.75) circle (3pt);

\draw (0,0.5) circle (0.25 cm);
\draw (-.4,-.7) -- (0,-0.5) -- (.4,-.7);
\draw (-.4,1.7) -- (0,1.5) -- (.4,1.7);
\draw (-.4,-.9) -- (-.8,.5) -- (-.4,1.9);
\draw (.4,-.9) -- (.8,.5) -- (.4,1.9);
\draw (.7,-.8) node{$2$};

\draw (1.1,0.5) node{$(1)$};
\end{scope}

\draw (1.75, .5) node {$=$};
\draw (2.25,0.5) node {$2$};

\begin{scope}[xshift= 3.25 cm]
\draw[ultra thick] (0,0) -- (0,1);
\fill (0,0) circle (3pt);
\fill (0,1) circle (3pt);
\draw (-.4,-.2) -- (0,0) -- (.4,-.2);
\draw (-.4,1.2) -- (0,1) -- (.4,1.2);
\draw (-.4,-.4) -- (-.8,.5) -- (-.4,1.4);
\draw (.4,-.4) -- (.8,.5) -- (.4,1.4);
\draw (.7,-.3) node{$2$};

\draw (1.1,0.5) node{$(1)$};

\end{scope}

\begin{scope}[yshift = -3 cm, xshift= 0cm]
\draw[ultra thick] (0,-0.5) -- (0,0.25);
\draw[ultra thick] (0,0.75) -- (0,1.5);

\fill (0,-0.5) circle (3pt);
\fill (0,1.5) circle (3pt);
\fill (0,0.25) circle (3pt);
\fill (0,0.75) circle (3pt);

\draw (0,0.5) circle (0.25 cm);
\draw (-.4,-.7) -- (0,-0.5) -- (.4,-.7);
\draw (-.4,1.7) -- (0,1.5) -- (.4,1.7);
\draw (-.5,-.9) -- (-.5,1.9);
\draw (.5,-.9) -- (.5,1.9);
\draw (.7,-.8) node{$2$};

\draw (1.75, .5) node {$=$};
\draw (2.25,0.5) node {$2$};

\begin{scope}[ xshift= 3 cm]
\draw[ultra thick] (0,0) -- (0,1);
\fill (0,0) circle (3pt);
\fill (0,1) circle (3pt);
\draw (-.4,-.2) -- (0,0) -- (.4,-.2);
\draw (-.4,1.2) -- (0,1) -- (.4,1.2);
\draw (-.5,-.4) -- (-.5,1.4);
\draw (.5,-.4) -- (.5,1.4);
\draw (.7,-.3) node{$2$};
\end{scope}
\end{scope}

\end{tikzpicture}$$
\end{lemma}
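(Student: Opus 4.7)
The plan is to verify the bubble relation separately for the $2$-factor polynomial evaluated at $z=1$ and for the index $|G:M|_2$. Both equations in the statement differ only in how the external strands are drawn outside the bubble, so a single local argument will handle both.

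For the polynomial part, I would apply the first bracket relation of Figure~\ref{figure:2factordef} to each of the two perfect matching edges of the bubble in turn, expanding the bracket of the left-hand configuration into four smoothed terms indexed by $(T,B)\in\{0,1\}^2$ with coefficient $(-z)^{T+B}$, where $0$ denotes the parallel smoothing and $1$ the crossed smoothing. The central observation is that inside the bubble, each of these four local states reduces to one of only two pictures: the parallel smoothing or the crossed smoothing of a single perfect matching edge with the same four external strands. Tracing the two bubble arcs shows that the $(0,0)$ and $(1,1)$ states both produce the parallel picture, because an even number of crossings preserves the top-to-bottom connectivity through the arcs, while the $(0,1)$ and $(1,0)$ states both produce the crossed picture. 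Writing $A$ and $B$ for the brackets of the parallel and crossed pictures respectively, summing the four terms gives
\[
\big\langle \text{bubble} \big\rangle_2 \;=\; (1+z^2)A - 2zB,
\]
while applying the same bracket relation directly to the right-hand side gives $2A - 2zB$. The difference is $(z^2-1)A$, which vanishes at $z=1$, establishing the polynomial half of the lemma.

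For the index $|G:M|_2$, I would construct an explicit two-to-one map from the $2$-factors of the bubble configuration that factor through $M$ onto the $2$-factors of the plain configuration that factor through the reduced matching $M'$. Both perfect matching edges of the bubble must belong to any such $2$-factor, and at each of the two interior bubble vertices exactly one additional edge must be selected. Since the only options at each interior vertex are the two parallel arcs of the bubble, and these arcs are shared between the two vertices, consistency forces a single binary choice of arc. This arc choice is independent of how the $2$-factor continues through the four external strands, so forgetting it defines a $2$-to-$1$ surjection onto the $2$-factors of the plain configuration with matching external data, giving the desired factor of $2$.

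The main obstacle I anticipate is the bookkeeping required to make both the local smoothing calculation and the local bijection genuinely independent of the rest of the graph, i.e.\ to treat the four external strand endpoints as formal ``outputs'' of the local diagram. Once this is set up carefully, the second equation in the statement follows from the first by the identical argument, since only the shape of the external strands (curved around the sides versus running vertically) differs between them.
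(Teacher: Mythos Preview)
Your proof is correct, and for the polynomial part it is essentially the paper's own argument: the paper also resolves both perfect matching edges of the bubble, obtains the four smoothed terms, and collapses them (at $z=1$) to $2A-2B=2(A-B)$, which is twice the bracket of a single perfect matching edge. Your version is slightly cleaner in that you keep $z$ general and isolate the discrepancy as $(z^{2}-1)A$ before specializing.

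For the index $|G:M|_2$ your approach differs from the paper's. The paper simply invokes Proposition~\ref{prop:count}: the digon is an extra even cycle in $G\setminus M$, so the count of $2$-factors doubles (or both sides are zero if some other cycle is odd). Your explicit $2$-to-$1$ map via the binary arc choice at the two inner vertices is a direct combinatorial version of the same fact; it is self-contained and perfectly valid, though longer than the one-line appeal to Proposition~\ref{prop:count}.

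One small misreading to fix: the two displayed equations in the lemma are not the same relation drawn with differently shaped external strands. The angled delimiters in the first row denote the bracket $\langle\,\cdot\,\rangle_2(1)$, while the straight vertical delimiters in the second row denote the index $|\,\cdot\,|_2$. Since you prove both the polynomial and the index statements anyway, this does not affect correctness, but your closing remark that the second equation follows from the first ``since only the shape of the external strands differs'' should be deleted.
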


\begin{proof}
The relationship for the index of $2$-factors that factor through $M$ follows immediately from Proposition \ref{prop:count}. The proof of the bubble lemma for the $2$-factor polynomial is given by resolving the perfect matchings as in the picture below:

$$\begin{tikzpicture}[scale=0.75]
\begin{scope}[xshift= 0cm]

\draw[ultra thick] (0,-0.5) -- (0,0.25);
\draw[ultra thick] (0,0.75) -- (0,1.5);

\fill (0,-0.5) circle (3pt);
\fill (0,1.5) circle (3pt);
\fill (0,0.25) circle (3pt);
\fill (0,0.75) circle (3pt);

\draw (0,0.5) circle (0.25 cm);
\draw (-.4,-.7) -- (0,-0.5) -- (.4,-.7);
\draw (-.4,1.7) -- (0,1.5) -- (.4,1.7);
\draw (-.4,-.9) -- (-.8,.5) -- (-.4,1.9);
\draw (.4,-.9) -- (.8,.5) -- (.4,1.9);
\draw (.7,-.8) node{$2$};

\draw (1.1,0.5) node{$(1)$};
\end{scope}

\draw (1.75, .5) node {$=$};

\begin{scope}[xshift = 3.25 cm]

	\fill[white!70!black] (.11,1.35) to 
	(-.11,1.35) to [out = -60, in = 90] 
	(-.1,1.25) to [out = -90, in = -120] 
	(-.11,1.15) to 
	(.11,1.15) to [out = 120, in = -90]
	(.1,1.25) to [out = 90, in = -120]
	(.11,1.35);
	
\draw[ultra thick] (0,-0.5) -- (0,0.25);

\fill (0,-0.5) circle (3pt);
\fill (0,0.25) circle (3pt);

\draw (0,0.25) to [out = 95, in = 270] (-0.2,0.75);
\draw (-0.2,0.75) to [out = 90, in = 270] (-0.1,1.25);
\draw (-0.1,1.25) to [out = 90, in = 290] (-0.3,1.75);

\draw (0,0.25) to [out = 85, in = 270] (0.2,0.75);
\draw (0.2,0.75) to [out = 90, in = 270] (0.1,1.25);
\draw (0.1,1.25) to [out = 90, in = 250] (0.3,1.75);

\draw (-.4,-.7) -- (0,-0.5) -- (.4,-.7);
\draw (-.4,-.9) -- (-.8,.5) -- (-.4,1.9);
\draw (.4,-.9) -- (.8,.5) -- (.4,1.9);
\draw (.7,-.8) node{$2$};

\draw (1.1,0.5) node{$(1)$};
\end{scope}

\draw (5,0.5) node{$-$};

\begin{scope}[xshift = 6.5 cm]
\draw[ultra thick] (0,-0.5) -- (0,0.25);

\fill (0,-0.5) circle (3pt);
\fill (0,0.25) circle (3pt);

\draw (0,0.25) to [out = 95, in = 270] (-0.2,0.75);
\draw (-0.2,0.75) to [out = 90, in = 250] (0.3,1.75);

\draw (0,0.25) to [out = 85, in = 270] (0.2,0.75);
\draw (0.2,0.75) to [out = 90, in = 290] (-0.3,1.75);

\draw (-.4,-.7) -- (0,-0.5) -- (.4,-.7);
\draw (-.4,-.9) -- (-.8,.5) -- (-.4,1.9);
\draw (.4,-.9) -- (.8,.5) -- (.4,1.9);
\draw (.7,-.8) node{$2$};

\draw (1.1,0.5) node{$(1)$};
\end{scope}


\begin{scope}[yshift = -3 cm]

\draw (1.75, .5) node {$=$};

\begin{scope}[xshift = 3.25 cm]

	\fill[white!70!black] 
	(.11,1.15 + 0.1) 
	to (-.11,1.15 + 0.1) to (-.11,1.15 - 0.1) 
	to (.11,1.15 - 0.1) to (.11,1.15 +0.1);
	
	\fill[white!70!black] 
	(.11,-0.15 + 0.1) 
	to (-.11,-0.15 + 0.1) to (-.11,-0.15 - 0.1) 
	to (.11,-0.15 - 0.1) to (.11,0.15 +0.1);

\draw (-0.3,-0.8) 
to [out = 70, in = 270] (-0.1,-0.8 + 0.65)
to [out = 90, in = 270] (-0.2,-0.8 + 2*0.65 ) 
to [out = 90, in = 270] (-0.1,-0.8 + 3*0.65)
to [out = 90, in = 290] (-0.3,-0.8 + 4*0.65);

\draw (0.3,-0.8) 
to [out = 110, in = 270] (0.1,-0.8 + 0.65)
to [out = 90, in = 270] (0.2,-0.8 + 2*0.65 ) 
to [out = 90, in = 270] (0.1,-0.8 + 3*0.65)
to [out = 90, in = 250] (0.3,-0.8 + 4*0.65);

\draw (-.4,-.9) -- (-.8,.5) -- (-.4,1.9);
\draw (.4,-.9) -- (.8,.5) -- (.4,1.9);
\draw (.7,-.8) node{$2$};

\draw (1.1,0.5) node{$(1)$};
\end{scope}

\draw (5,0.5) node{$-$};

\begin{scope}[xshift = 6.5 cm]

	\fill[white!70!black] 
	(.11,1.15 + 0.1) 
	to (-.11,1.15 + 0.1) to (-.11,1.15 - 0.1) 
	to (.11,1.15 - 0.1) to (.11,1.15 +0.1);
	
\draw (0.3,-0.8) 
to [out = 110, in = 290] (0,-0.8 + 0.65)
to [out = 110, in = 270] (-0.2,-0.8 + 2*0.65 ) 
to [out = 90, in = 270] (-0.1,-0.8 + 3*0.65)
to [out = 90, in = 290] (-0.3,-0.8 + 4*0.65);

\draw (-0.3,-0.8) 
to [out = 70, in = 250] (0,-0.8 + 0.65)
to [out = 70, in = 270] (0.2,-0.8 + 2*0.65 ) 
to [out = 90, in = 270] (0.1,-0.8 + 3*0.65)
to [out = 90, in = 250] (0.3,-0.8 + 4*0.65);

\draw (-.4,-.9) -- (-.8,.5) -- (-.4,1.9);
\draw (.4,-.9) -- (.8,.5) -- (.4,1.9);
\draw (.7,-.8) node{$2$};

\draw (1.1,0.5) node{$(1)$};
\end{scope}

\draw (8.3,0.5) node{$-$};

\begin{scope}[xshift = 9.8 cm]

	\fill[white!70!black] 
	(.11,-0.15 + 0.1) 
	to (-.11,-0.15 + 0.1) to (-.11,-0.15 - 0.1) 
	to (.11,-0.15 - 0.1) to (.11,0.15 +0.1);
		
\draw (0.3,-0.8 + 4*0.65) 
to [out = 250, in = 70] (0,-0.8 + 3*0.65)
to [out = 70, in = 90] (-0.2,-0.8 + 2*0.65 ) 
to [out = 270, in = 90] (-0.1,-0.8 + 0.65)
to [out = 270, in = 70] (-0.3,-0.8);

\draw (-0.3,-0.8 + 4*0.65) 
to [out = -70, in = 110] (0,-0.8 + 3*0.65)
to [out = -70, in = 90] (0.2,-0.8 + 2*0.65 ) 
to [out = 270, in = 90] (0.1,-0.8 + 0.65)
to [out = 270, in = 110] (0.3,-0.8);

\draw (-.4,-.9) -- (-.8,.5) -- (-.4,1.9);
\draw (.4,-.9) -- (.8,.5) -- (.4,1.9);
\draw (.7,-.8) node{$2$};

\draw (1.1,0.5) node{$(1)$};
\end{scope}

\draw (11.5,0.5) node{$+$};

\begin{scope}[xshift = 12.9 cm]
	
\draw (0.3,-0.8) 
to [out = 110, in = 290] (0,-0.8 + 0.65)
to [out = 110, in = 270] (-0.2,-0.8 + 2*0.65 ) 
to [out = 90, in = 250] (0,-0.8 + 3*0.65)
to [out = 70, in = 250] (0.3,-0.8 + 4*0.65); 

\draw (-0.3,-0.8) 
to [out = 70, in = 250] (0,-0.8 + 0.65)
to [out = 70, in = 270] (0.2,-0.8 + 2*0.65 ) 
to [out = 90, in = 290] (0,-0.8 + 3*0.65)
to [out = 110, in = 290] (-0.3,-0.8 + 4*0.65); 

\draw (-.4,-.9) -- (-.8,.5) -- (-.4,1.9);
\draw (.4,-.9) -- (.8,.5) -- (.4,1.9);
\draw (.7,-.8) node{$2$};

\draw (1.1,0.5) node{$(1)$};
\end{scope}

\end{scope}


\begin{scope}[yshift = -5.5 cm]

\draw (1.75, .5) node {$=$};

\begin{scope}[xshift = 3.25 cm]
	\draw (-1,.5) node{$2$};
	\fill[white!70!black] (.12,.6) to 
	(-.12,.6) to [out = -60, in = 90] 
	(-.1,.5) to [out = -90, in = -120] 
	(-.12,.4) to 
	(.12,.4) to [out = 120, in = -90]
	(.1,.5) to [out = 90, in = -120]
	(.12,.6);
	\draw (-.4,-.4) -- (-.8,.5) -- (-.4,1.4);
	\draw (.4,-.4) -- (.8,.5) -- (.4,1.4);
	\draw (.7,-.3) node{$2$};
	\draw (-.4,-.2) to [out = 60, in = -90] (-.1,.5) to [out = 90, in = -60] (-.4,1.2);
	\draw (.4,-.2) to [out = 120, in = -90] (.1,.5) to [out = 90, in = -120] (.4,1.2);
	\draw (1.1,0.5) node{$(1)$};
	
\end{scope}


\draw (5, .5) node{$-$};
\draw (5.5,.5) node{$2$};

\begin{scope}[xshift = 6.5 cm]
	\draw (-.4,-.4) -- (-.8,.5) -- (-.4,1.4);
	\draw (.4,-.4) -- (.8,.5) -- (.4,1.4);
	\draw (.7,-.3) node{$2$};
	\draw (-.4,-.2) -- (.4,1.2);
	\draw (-.4,1.2) -- (.4,-.2);
	\draw (1.1,0.5) node{$(1)$};
\end{scope}

\end{scope}

\begin{scope}[yshift = -7.5 cm, xshift= 3.25 cm]
\draw (-1.5, .5) node {$=$};

	\draw (-1,.5) node{$2$};
\draw[ultra thick] (0,0) -- (0,1);
\fill (0,0) circle (3pt);
\fill (0,1) circle (3pt);
\draw (-.4,-.2) -- (0,0) -- (.4,-.2);
\draw (-.4,1.2) -- (0,1) -- (.4,1.2);
\draw (-.4,-.4) -- (-.8,.5) -- (-.4,1.4);
\draw (.4,-.4) -- (.8,.5) -- (.4,1.4);
\draw (.7,-.3) node{$2$};

\draw (1.1,0.5) node{$(1)$};

\end{scope}

\end{tikzpicture}$$

\end{proof}

\begin{remark}
Note that the bubble relation in Lemma \ref{lem:bubblerelation} is not true of the polynomial itself, just the value of the polynomial when evaluated at $1$.  
\end{remark}

\begin{lemma}
\label{lem:bridgelemma}
Let $G$ be a connected planar trivalent graph with a perfect matching $M$.  If $G$ contains a bridge, then $\langle G : M\rangle_2(1) = 0$, and $|{G:M}|_2 = 0$.  
\end{lemma}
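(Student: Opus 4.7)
The plan is to first show that the bridge $e$ must lie in $M$, which handles $|G:M|_2=0$ almost immediately. Write $e=uv$ and let $A,B$ be the two components of $G\setminus e$, with $u\in A$ and $v\in B$. Since $G$ is $3$-regular, a handshake-lemma count on the induced subgraph $G[A]$ (in which $u$ has degree $2$ and every other vertex has degree $3$) forces $|A|$ to be odd, and similarly $|B|$ is odd. If $e\notin M$, then $M$ would have to restrict to a perfect matching of $A$, contradicting odd cardinality. So $e\in M$, and any $2$-factor $K$ factoring through $M$ would contain $e$; but $K$ is a disjoint union of cycles, and no cycle contains a bridge of $G$. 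Hence $|G:M|_2=0$.

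For the polynomial, I use the state-sum expansion
\[
\langle G:M\rangle_2(z) \;=\; \sum_s (-z)^{k(s)}(z+z^{-1})^{\ell(s)},
\]
where $s$ ranges over all assignments of a $)($- or $X$-resolution to each perfect matching edge, $k(s)$ counts the $X$-resolutions, and $\ell(s)$ counts the immersed loops in state $s$. Pair each state $s$ with the state $\tilde s$ obtained by flipping the resolution at $e$; this is a fixed-point-free involution with $k(\tilde s)=k(s)\pm 1$. If $\ell(s)=\ell(\tilde s)$ for every such pair, then at $z=1$ the pair contributes $(-1)^{k(s)}\bigl(2^{\ell(s)}-2^{\ell(\tilde s)}\bigr)=0$, yielding $\langle G:M\rangle_2(1)=0$.

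The crux is therefore the invariance $\ell(s)=\ell(\tilde s)$. Fix all resolutions other than the one at $e$ and consider the resolved diagram restricted to the $A$-side of the bridge. This region meets the bridge at exactly two boundary half-edges (the two non-$M$ half-edges at $u$); every strand of a fully resolved diagram is either an interior closed loop or an arc with both endpoints on the boundary, so the two boundary points are the endpoints of a single $A$-side arc, together with some number $\ell_A$ of interior loops. Symmetrically, the $B$-side contributes exactly one arc between the two boundary points at $v$, plus $\ell_B$ interior loops. The $)($- and $X$-resolutions of $e$ supply two different pairings of the four boundary points across the bridge, but in either case the two bridge connectors, together with the $A$-arc and the $B$-arc, form a single closed loop. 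Thus both $s$ and $\tilde s$ have $\ell_A+\ell_B+1$ loops.

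The main obstacle is this last combinatorial check: that each of the two bridge resolutions splices the $A$-arc and the $B$-arc into exactly one loop rather than two. This is what distinguishes the bridge case—because each side contributes only a single arc meeting the bridge, the two possible reconnections amount to a cyclic re-pairing of four arcs already lying on a single cycle, which preserves the loop count.
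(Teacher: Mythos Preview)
Your proof is correct. The underlying idea matches the paper's: first establish that the bridge $e$ must lie in $M$, then exploit the resolution at $e$. The paper, however, simply cites \cite{BaldCohomology} for both of those facts, whereas you have supplied self-contained arguments.

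For $|G:M|_2=0$, your direct argument (a bridge cannot lie on a cycle of a $2$-factor) is cleaner than going through Proposition~\ref{prop:count}, though the parity count you give is essentially the same one needed to see that $G\setminus M$ has an odd cycle on each side of the bridge.

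For the polynomial, your state-pairing argument is really a proof that $\langle G_{\text{open}}\rangle_2 = \langle G_{\text{cross}}\rangle_2$ as full polynomials, not just at $z=1$: the bijection $s\leftrightarrow\tilde s$ preserves $\ell$ and shifts $k$ by one, so term-by-term the state sums for the two resolutions of $e$ agree. Combined with the skein relation this gives $\langle G:M\rangle_2 = (1-z)\langle G_{\text{open}}\rangle_2$, which is exactly the $(z-1)$ factor the paper invokes by citation. So your argument actually recovers the stronger statement the paper quotes, while only needing the $z=1$ specialization for the lemma at hand.
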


\begin{proof}
The result  for $|{G:M}|_2 $ follows from Proposition \ref{prop:count}. For the $2$-factor polynomial, first note that if $G$ has a bridge, then it must be a perfect matching edge (cf. \cite{BaldCohomology}). Resolving this perfect matching edge shows that the $2$-factor polynomial has a $(z-1)$ factor, and therefore $\langle {G:M}\rangle_2(1)=0$ (again, see \cite{BaldCohomology}).
\end{proof}

\begin{figure}[h]
$$\begin{tikzpicture}[thick, scale = 0.8]

\fill (0,0) circle (3pt);
\fill (0,1) circle (3pt);
\draw [ultra thick] (0,0) -- (0,1);
\draw [thin] (0,0) to [out = 135, in =-90] (-.25,.5) to [out = 90, in = 225] (0,1);

\draw[->] (1,.5) -- (2,.5);

\draw [thin] (2.95,.5) circle (.3cm);
\draw [ultra thick] (3.25,.5) -- (4.25,.5);
\fill (3.25,.5) circle (3pt);
\fill (4.25,.5) circle (3pt);


\begin{scope}[yshift = -1.5cm]
	\fill (0,0) circle (3pt);
	\fill (0,1) circle (3pt);
	\draw [ultra thick] (0,0) -- (0,1);
	\draw [thin] (0,0) -- (-.72,.5) --  (0,1);
	\fill (-.72,.5) circle (3pt);
	\draw[ultra thick] (-.72,.5)  -- (-1.1,.5);

	\draw[->] (1,.5) -- (2,.5);

	\draw [thin] (3.2,.5) ellipse(.3cm and .2cm);
	\draw [ultra thick] (3.5,.5) -- (4.5,.5);
	\fill (3.5,.5) circle (3pt);
	\fill (4.5,.5) circle (3pt);
	\fill (2.9, .5) circle (3pt);
	\draw [ultra thick] (2.9,.5) -- (2.4,.5);
\end{scope}


\begin{scope}[yshift = -3cm]
	\fill (0,0) circle (3pt);
	\fill (0,1) circle (3pt);
	\draw [ultra thick] (0,0) -- (0,1);
	\draw [thin] (0,0) -- (-1,0) -- (-1,1) -- (0,1);
	\fill (-1,0) circle (3pt);
	\fill (-1,1) circle (3pt);
	\draw[ultra thick] (-1,0) -- (-1.2,-.2);
	\draw[ultra thick] (-1,1) -- (-1.2, 1.2);

	\draw[->] (1,.5) -- (2,.5);

	\draw [thin] (3.5,.5) -- (2.8,1) -- (2.8,0) -- (3.5,.5);
	\draw [ultra thick] (3.5,.5) -- (4.5,.5);
	\fill (3.5,.5) circle (3pt);
	\fill (4.5,.5) circle (3pt);
	\fill (2.8,1) circle (3pt);
	\fill (2.8,0) circle (3pt);
	\draw [ultra thick] (2.8,1) -- (2.6,1.15);
	\draw [ultra thick] (2.8,0) -- (2.6,-.15);
\end{scope}

\begin{scope}[xshift = 6cm]

	\draw[ultra thick] (0,0) -- (0,1);
	\draw[ultra thick] (1,0) -- (1,1);
	\draw [thin] (-.3,1) -- (1.3,1);
	\draw [thin] (-.3,0) -- (1.3,0);
	\fill (0,0) circle (3pt);
	\fill (0,1) circle (3pt);
	\fill (1,0) circle (3pt);
	\fill (1,1) circle (3pt);

	\draw[->] (2,.5) -- (3,.5); 
	
	\draw [thin] (4.5,.5) ellipse(.3cm and .2cm);
	\draw [ultra thick] (4.8,.5) -- (5.5,.5);
	\fill (4.2,.5) circle (3pt);
	\fill (4.8,.5) circle (3pt);
	\fill (3.5, .5) circle (3pt);
	\fill (5.5,.5) circle (3pt);
	\draw [ultra thick] (4.2,.5) -- (3.5,.5);

\end{scope}


\begin{scope}[xshift = 6cm, yshift = -1.5cm]

	\draw[ultra thick] (0,0) -- (0,1);
	\draw[ultra thick] (1,0) -- (1,1);
	\draw [thin] (-.3,1) -- (1.3,1);
	\draw [thin] (-.3,0) -- (1.3,0);
	\fill (0,0) circle (3pt);
	\fill (0,1) circle (3pt);
	\fill (1,0) circle (3pt);
	\fill (1,1) circle (3pt);
	\fill (.5,1) circle (3pt);
	\draw[ultra thick] (.5,1) -- (.5,1.4);

	\draw[->] (2,.5) -- (3,.5); 
	
	\fill (4,0) circle (3pt);
	\fill (5,0) circle (3pt);
	\fill (4.5,1) circle (3pt);
	\draw[ultra thick] (4,0) -- (3.6,0);
	\draw[ultra thick] (5,0) -- (5.4,0);
	\draw[ultra thick] (4.5,1) -- (4.5,1.4);
	\draw [thin] (4,0) -- (5,0) -- (4.5,1) -- (4,0);

\end{scope}


\begin{scope}[xshift = 6cm, yshift = -3cm]

	\draw[ultra thick] (.25,.9) -- (.75,.9);
	\draw[ultra thick] (.25,.1) -- (0,.5);
	\draw[ultra thick] (.75,.1) -- (1,.5);
	\draw [thin] (1.3,.5) -- (1,.5) -- (.75,.9) -- (1,1.2);
	\draw [thin] (0, 1.2) -- (.25,.9) -- (0,.5) -- (-.3,.5);
	\draw [thin] (0,0) -- (.25,.1) -- (.75,.1) -- (1,0);
	\fill (1,.5) circle (3pt);
	\fill (.75, .9) circle (3pt);
	\fill (.25,.9) circle (3pt);
	\fill (0,.5) circle (3pt);
	\fill (.25,.1) circle (3pt);
	\fill (.75,.1)circle (3pt);
	
	\draw[->] (2,.5) -- (3,.5); 
	
	\fill (4,0) circle (3pt);
	\fill (5,0) circle (3pt);
	\fill (4.5,1) circle (3pt);
	\draw[ultra thick] (4,0) -- (3.6,0);
	\draw[ultra thick] (5,0) -- (5.4,0);
	\draw[ultra thick] (4.5,1) -- (4.5,1.4);
	\draw [thin] (4,0) -- (5,0) -- (4.5,1) -- (4,0);

\end{scope}

\end{tikzpicture}$$
\caption{Local configurations of the pair $(G,M)$ that may appear when ${G(V,E\setminus M)}$ has one cycle.}
\label{figure:configs}
\end{figure}
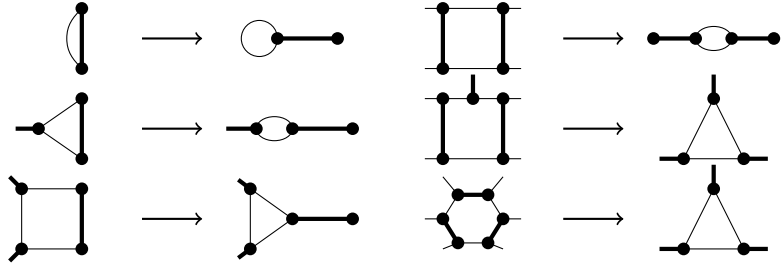

\begin{lemma}
\label{lemma:DiskLemma}
Let the pair $(G,M)$ be a planar trivalent graph $G$ with perfect matching $M$.  If  ${G(V,E\setminus M})$ is a single cycle, then $G$ must contain one of the local configurations appearing to the left of each arrow in Figure \ref{figure:configs}.
\end{lemma}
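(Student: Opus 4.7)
My strategy is to apply Euler's formula to the planar embedding of $G$ to guarantee a face of small size, and then to classify that face by a case analysis on its combinatorial type.

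Since $G$ is a connected planar trivalent graph with $|V(G)| = 2n$, we have $|E(G)| = 3n$ and Euler's formula yields $|F(G)| = n+2$. Summing boundary lengths, $\sum_{f}|\partial f| = 2|E(G)| = 6n$, so the average face size is $\tfrac{6n}{n+2} < 6$. Hence $G$ contains a face $f$ with $|\partial f| \leq 5$.

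I would then classify such a face $f$ by the pair $(s,m)$, where $s = |\partial f|$ and $m$ is the number of edges of $M$ appearing on $\partial f$. The following three constraints drive the enumeration:
\begin{enumerate}
\item every vertex of $G$ lies on exactly one matching edge;
\item the non-matching edges form a single cycle $H$ of length $2n$, so any maximal run of non-matching edges on $\partial f$ is a sub-arc of $H$, and $H$ contains no odd cycle;
\item two matching edges cannot share a vertex, so matching edges on $\partial f$ must be separated by at least one cycle edge.
\end{enumerate}
Running through the pairs $(s,m)$ with $2 \leq s \leq 5$, many are ruled out: for instance $(3,0)$ would produce a $3$-cycle in $H$, contradicting the parity of $|H|=2n$; $(2,2)$, $(3,2)$, $(4,3)$, $(4,4)$, $(5,3)$, $(5,4)$ and $(5,5)$ are all forbidden by constraint (iii). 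The admissible pairs are precisely $(2,1),(3,1),(4,1),(4,2),(5,2)$, and in each case, drawing $\partial f$ together with the outgoing edges at its vertices produces the corresponding picture among the first five configurations displayed to the left of the arrows in Figure~\ref{figure:configs}.

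The sixth, hexagonal, configuration is then handled by a secondary step: if none of the first five configurations occurs in $G$ but the small face $f$ is a pentagon with a single matching edge (the case $(5,1)$), then the three interior vertices of that pentagon each carry an external matching edge, and planarity of $G$ together with the chord structure on the opposite side of $H$ forces a hexagonal face with three alternating matching edges, matching the last configuration in Figure~\ref{figure:configs}.

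The main obstacle will be justifying this last step cleanly, namely showing that a pentagonal face with only one matching edge really does force the hexagonal configuration somewhere in $G$. This requires tracking how chords on both sides of $H$ interact, and is the subtlest part of the enumeration; the preceding identification of $(s,m)$ pairs with the first five pictures of Figure~\ref{figure:configs} is largely a routine combinatorial check.
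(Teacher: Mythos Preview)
Your approach via Euler's formula is genuinely different from the paper's: the paper never looks at all the faces of $G$, but instead restricts to one side $D$ of the cycle $C$ (the side carrying at least half of the matching chords), forms the dual tree on the disk faces cut out by those chords, and uses the leaf-counting identity $\delta_1=2+\sum_{m\ge 3}(m-2)\delta_m$ together with a count of ``free'' vertices to force one of the six $D_m^\ell$ configurations inside $D$. Your identification of the pairs $(2,1),(3,1),(4,1),(4,2),(5,2)$ with the first five pictures is correct, and your exclusions of the other small pairs are fine.

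The genuine gap is the case $(s,m)=(5,1)$, and your ``secondary step'' does not close it. First, the three matching edges at $v_3,v_4,v_5$ need not lie on the opposite side of $H$: they can perfectly well be chords on the \emph{same} side of $H$ as your pentagon, bounding other interior faces, so no conclusion about ``the chord structure on the opposite side'' is available. Second, and more fundamentally, a single $(5,1)$ face carries no local information that forces a $(6,3)$ hexagon anywhere in $G$; a global count is unavoidable. One clean repair is this: set $\ell_f=s_f-2m_f$ for each face, and observe that the hypothesis ``none of the six configurations appears'' forces $\ell_f+m_f\ge 4$ whenever $m_f\ge 1$. Since $\sum_f m_f=2|M|=2n$ and $\sum_f \ell_f=|V|=2n$ (each vertex is free on exactly one of the three faces meeting it), summing gives $4n\ge 4(n+2)$ if every face has $m_f\ge 1$, and $4n\ge 2n+4(n+1)$ if one side of $H$ is chord-free; either way a contradiction. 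This fixes your argument, but it is essentially the paper's inequality transported to all of $G$: the bare Euler bound ``some face has size $\le 5$'' is not strong enough on its own.
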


\begin{proof}
Suppose that $M$ has $k$ edges, and that $G$ has $n$ vertices ($n=2k$). Since ${G(V,{E\setminus M})}$ is a cycle, it has an inside and an outside. Either the inside or outside of $C$ contains at least $k/2$ edges of the perfect matching $M$. Without loss of generality, assume the inside of $C$ contains at least $k/2$ edges of $M$. Call this inside region $D$.  Since at least $k/2$ edges of $M$ lie in $D$, those edges split $D$ into at least $1 + k/2$ disk faces.  The boundary of each of these disks will contain some number of edges of $M$, and some number of vertices that do not belong to any edge of $M$.  If such a disk has $m$ edges from $M$ and $\ell$ vertices not incident to any edge of $M$ on the inside of the cycle $C$, then we label that disk $D^\ell_m$. The configurations in Figure \ref{figure:configs} are $D_1^0, D_1^1, D_1^2, D_2^0, D_2^1$ and $D_3^0$. By way of contradiction, assume no such configuration appears in $D$.

We form another graph $T$ by placing a vertex in each face of $D$ and joining two vertices when their associated faces are separated by an edge of $M$. The graph $T$ is a tree, and the degree of each vertex in $T$ is the number of perfect matching edges $M$ in the boundary of that disk face. Let $\delta_m$ denote the number of faces of the form $D_m^\ell$ for any $\ell$. Hence $\delta_m$ is the number of degree $m$ vertices in the tree $T$. A standard way to count leaves in a tree results in the following formula:
$$\delta_1 = 2 + \sum_{m=3}^\infty (m-2)\delta_m.$$

Define $\delta_{\ge 4} = \sum_{m=4}^\infty \delta_m$. The above equality implies that $\delta_1 \geq 2 + \delta_3 + 2\delta_{\ge 4},$ or equivalently that $\delta_{\ge 4} \leq  \frac{1}{2}\left(\delta_1 - \delta_3\right)-1$. Therefore
 $$ \frac{k}{2}+1 \leq \delta_1 + \delta_2 + \delta_3 + \delta_{\ge 4}  \leq   \frac{3\delta_1}{2} + \delta_2 + \frac{\delta_3 }{2}-1,$$
 or
 $$k \leq 3\delta_1 +2\delta_2 + \delta_3 - 4.$$

Since the faces in Figure \ref{figure:configs} are assumed not to appear in $C$, each disk of the form $D_1^{\ell}$ must have $\ell\geq 3$, each disk of the form $D_2^{\ell}$ must have $\ell\geq 2$, and each disk of the form $D_3^\ell$ must have $\ell\geq 1$. There are also at most $k$ vertices of $G$ not incident to an edge of the matching $M$ inside $C$. By counting the number of vertices of $G$ not incident to an edge of the matching $M$ inside $C$, we obtain the inequality 
$$k \geq 3\delta_1 + 2\delta_2 + \delta_3.$$
Therefore
$$k \geq 3\delta_1 + 2\delta_2 + \delta_3>3\delta_1 +2\delta_2 + \delta_3 - 4 \geq k.$$
Having arrived at a contradiction, we conclude that some face from Figure \ref{figure:configs} appears in $C$.
\end{proof}

\begin{lemma}
\label{lem:IHMaxCycle}
Let $G$ be a connected bridgeless planar trivalent  graph with perfect matching $M$. There exists a sequence of IH-moves transforming the pair $(G,M)$ into the pair $(G',M')$ where $G'$ is a planar trivalent bridgeless graph and $M'$ is a perfect matching of $G'$ such that $G'(V',E'\setminus M')$ contains a cycle of length at most three.
\end{lemma}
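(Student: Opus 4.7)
The plan is to induct on the number $c$ of cycles of the $2$-regular subgraph $G \setminus M$, using Lemma \ref{lemma:DiskLemma} to handle the base case and IH-moves to reduce the general case to it.

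\textbf{Base case} ($c = 1$): Here the hypothesis of Lemma \ref{lemma:DiskLemma} is satisfied, so $G$ must contain at least one of the six configurations $D_1^0, D_1^1, D_1^2, D_2^0, D_2^1, D_3^0$ of Figure \ref{figure:configs}. For whichever of the six appears, I would perform the IH-move indicated on the arrow. In each of the six cases, a direct inspection of the right-hand side of Figure \ref{figure:configs} shows that this move creates a short cycle in the non-matching subgraph of the new pair $(G', M')$: a loop of length one from $D_1^0$, a digon of length two from $D_1^1$ and $D_2^0$, and a triangle of length three from $D_1^2$, $D_2^1$, and $D_3^0$.

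\textbf{Inductive step} ($c \ge 2$): Because $G$ is connected, there must exist a matching edge $e = \{u,v\} \in M$ with $u$ and $v$ on distinct cycles $C_1, C_2$ of $G \setminus M$; otherwise every matching edge has both endpoints on a single cycle, and the cycles form disjoint components of $G$, contradicting connectedness. A direct trace of the rewiring of the four non-matching edges at $u$ and $v$ shows that the IH-move on $e$ fuses $C_1$ and $C_2$ into a single cycle of the new non-matching subgraph, leaves every other cycle of $G \setminus M$ unchanged, and preserves connectedness of $G$. Therefore the cycle count strictly decreases, and after at most $c-1$ such moves we reach the base case.

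\textbf{Main obstacle:} The most delicate point is maintaining the bridgeless hypothesis on $G'$. In particular, the IH-move on a $D_1^0$ configuration creates a vertex whose only non-matching edges form a loop, which makes the new matching edge incident to it a bridge. This case requires a separate argument, either showing that $D_1^0$ cannot arise in a bridgeless trivalent graph whose non-matching subgraph is a single cycle, or performing one or more additional IH-moves after the main one to eliminate the bridge while preserving the newly produced short cycle. A parallel verification is needed in the inductive step to confirm that the merging IH-move never introduces a bridge; the natural argument is that the fused cycle $C_1 \cup C_2$, taken together with the matching edges adjacent to it, witnesses $2$-edge-connectivity in a neighborhood of $e$, so no bridge can appear locally after the rewiring.
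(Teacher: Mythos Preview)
Your approach is essentially the same as the paper's: merge the cycles of $G\setminus M$ one by one via IH-moves on matching edges joining distinct cycles until a single cycle remains, then invoke Lemma~\ref{lemma:DiskLemma} and perform the IH-move(s) indicated in Figure~\ref{figure:configs} to produce a cycle of length $\le 3$. The paper phrases the first stage as ``there is a sequence of IH-moves'' rather than an explicit induction, but the content is identical.

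Your ``main obstacle'' paragraph is well taken, and in fact the paper's own proof does not address bridgelessness of the intermediate or final graphs at all. Indeed, the $D_1^0$ case you single out \emph{does} produce a bridge, and this can genuinely occur starting from a bridgeless $G$. The resolution is not to repair the lemma but to notice that the ``bridgeless'' clause in the conclusion is not actually used downstream: in the proof of Theorem~\ref{thm:MainTheorem} the authors explicitly treat the case where $G'(V',E'\setminus M')$ has a cycle of length~$1$ by observing that $G'$ then has a bridge and appealing to Lemma~\ref{lem:bridgelemma}. So the lemma is effectively applied in the weaker form ``$G'$ is planar trivalent with perfect matching $M'$ and $G'\setminus M'$ has a cycle of length $\le 3$,'' and your argument already establishes that. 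You need not supply the additional moves you propose; it suffices to drop the bridgeless claim from the conclusion and note that the application in Theorem~\ref{thm:MainTheorem} does not require it.
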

\begin{proof}
An IH-move either merges or splits two cycles of $G(V,E\setminus M)$. In a connected graph where $G(V,E\setminus M)$ contains more than one cycle, one can perform an IH-move that merges two cycles of $G(V,E\setminus M)$. Thus there is a sequence of IH-moves transforming $(G,M)$ into the pair $(G_1,M_1)$ such that $G_1(V_1,E_1\setminus M_1)$ is a single cycle $C$. 

Lemma \ref{lemma:DiskLemma} implies that a face as in Figure \ref{figure:configs} must appear either inside or outside of the cycle $C$ in $G_1$. Performing the IH-move(s) in Figure \ref{figure:configs} to the appropriate edge(s) of $M_1$ results in a pair $(G',M')$ such that $G'(V',E'\setminus M')$ contains a cycle of length at most three.
\end{proof}

\begin{lemma}
\label{lem:TriangleLemma}
Let $G$ be a planar trivalent graph $G$ with a perfect matching $M$.  If $G\setminus M$ contains a $3$-cycle, then $\langle{G:M} \rangle_2(1)  = 0$.
\end{lemma}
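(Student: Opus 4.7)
The strategy I would pursue is to resolve the three matching edges $m_1, m_2, m_3$ incident to the vertices of the 3-cycle in $G\setminus M$, producing $2^3=8$ immersed diagrams, and then show that their signed sum vanishes at $z=1$.

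\textbf{Setup.} Label the 3-cycle's vertices as $v_1,v_2,v_3$, with triangle edges $e_{12},e_{23},e_{13}$ and matching edges $m_i = v_iu_i$. Iterating the first rule of Figure~\ref{figure:2factordef} on each $m_i$ produces
$$
\langle G : M \rangle_2(z) \;=\; \sum_{s\in\{O,X\}^3} (-z)^{|X(s)|}\,\langle \Gamma_s \rangle_2(z),
$$
where $\Gamma_s$ is the immersed diagram obtained by resolving each $m_i$ according to $s_i$. At $z=1$ the coefficients become $(-1)^{|X(s)|}$, so the goal is to show that the eight values $\langle\Gamma_s\rangle_2(1)$ combine with alternating signs to zero.

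\textbf{First reduction.} I would first resolve only $m_2$ and $m_3$, producing four intermediate diagrams $\Gamma_{OO},\Gamma_{OX},\Gamma_{XO},\Gamma_{XX}$. In each of these, the vertices $v_2,u_2,v_3,u_3$ are smoothed out by their resolutions, and the three triangle edges become arcs. The crucial observation is that the edge $e_{23}$ together with the two spliced half-edges at $v_2$ and $v_3$ assembles into a single arc that connects two of the external endpoints $\{p_2,q_2,p_3,q_3\}$, while $e_{12}$ and $e_{13}$ become arcs emanating from $v_1$ toward the remaining two external endpoints. Thus each $\Gamma_{s_2s_3}$ is, up to isotopy, the original rest of $G$ together with the matching edge $m_1$ flanked by two parallel arcs joining $v_1$ and $u_1$ (one on each side of $m_1$), plus an additional ``stray'' arc.

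\textbf{Cancellation at $z=1$.} I would then resolve $m_1$ within each of the four intermediate diagrams. In every case the parallel arcs around $m_1$ create the two-parallel-connection configuration needed to invoke the Bubble Lemma (Lemma~\ref{lem:bubblerelation}) at $z=1$. The lemma forces $\langle\Gamma_{O,s_2,s_3}\rangle_2(1)$ and $\langle\Gamma_{X,s_2,s_3}\rangle_2(1)$ to be equal, since the two resolutions of $m_1$ inside a bubble-like configuration contribute identical values at $z=1$. Summing the four pairs with their signs then produces the desired vanishing. The odd length of the 3-cycle is essential here: it is what makes the arc assembled from $e_{23}$ land on the ``opposite'' side of $m_1$ from $m_1$ itself, producing genuine parallelism rather than self-loops that would not invoke the bubble cancellation.

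\textbf{Main obstacle.} The main technical step is verifying that the bubble configuration near $m_1$ in each $\Gamma_{s_2s_3}$ really satisfies the hypothesis of Lemma~\ref{lem:bubblerelation}, since the ``bubble'' here involves one matching edge ($m_1$) and a non-matching arc (built from $e_{23}$ and pieces of the $q_i$'s), rather than the two matching edges depicted in the lemma's statement. I would handle this either by a direct reworking of the argument in the proof of Lemma~\ref{lem:bubblerelation} in this slightly generalized setting, or by first inserting an auxiliary IH-move via Lemma~\ref{lem:IHLemma} to convert the arc into a matching edge to which the stated Bubble Lemma applies directly. As a sanity check, I would verify the entire calculation against the base case $(P_3,L)$, for which $\langle P_3:L\rangle_2(1)=0$ is already known, to confirm that all pairings and signs are consistent.
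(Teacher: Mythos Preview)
Your proposed cancellation mechanism does not work, and your own sanity check exposes it. You claim that for each fixed $(s_2,s_3)$ the two values $\langle\Gamma_{O,s_2,s_3}\rangle_2(1)$ and $\langle\Gamma_{X,s_2,s_3}\rangle_2(1)$ are equal. Test this on $(P_3,L)$: with $s_2s_3=OO$ the all-open state has three circles (value $8$) while the state with only $m_1$ crossed has two circles (value $4$). The four differences are $4,\,2,\,2,\,0$, and they combine with signs $+,-,-,+$ to give zero, but no individual pair cancels. So the vanishing is a genuinely global phenomenon across all eight states, not a pairwise one.

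The underlying geometric picture is also off. After resolving $m_2$ and $m_3$, the arcs emanating from $v_1$ have no reason to land at $u_1$; in the $OO$ case the three triangle edges simply splice into a loop based at $v_1$, while the half-edges at $u_2,u_3$ wander off into the rest of the graph. There are no ``two parallel arcs joining $v_1$ and $u_1$'' in general, so neither the Bubble Lemma nor any variant of it is available at $m_1$. Your proposed fix via an $IH$-move does not apply either: $IH$-moves act on perfect-matching edges of a trivalent graph, not on strands of a partially resolved immersed diagram.

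The paper's argument proceeds in the opposite order: first resolve every matching edge \emph{not} incident to the triangle, which factors the bracket as a sum of terms $\pm 2^\ell\langle G'\rangle_2(1)$ where $G'$ is a six-endpoint tangle containing only the three matching edges $m_1,m_2,m_3$. The outside of $G'$ is then just three arcs connecting the six endpoints, and up to the symmetry of the disk there are only seven such connection patterns. For each pattern one writes down the eight-state cube explicitly and checks that the alternating sum of $2^{\#\text{circles}}$ is zero. The oddness of the $3$-cycle enters not through any bubble-type reduction but through this finite case check.
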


\begin{proof}
Assume that $M$ has $k$ edges and $G$ has $n$ vertices ($n=2k$). Observe that if 2 vertices of the cycle are also connected by a perfect matching edge, then the third vertex must be an endpoint of a perfect matching edge that is a bridge, and therefore, by Lemma~\ref{lem:bridgelemma}, we are done. Thus, we may assume that the three perfect matching edges incident with the $3$-cycle are distinct.  For the same reason, we may assume that all three perfect matching edges are all outside of the $3$-cycle or are all inside the $3$-cycle.  Thus, we may assume, without loss of generality,  that the $3$-cycle, with distinct perfect matching edges at each vertex, appears as in Figure~\ref{fig:3cycle}.

\begin{figure}[H]
$$\begin{tikzpicture}[scale=.8]

\draw[dashed] (0,-0.18) circle (1 cm);
\draw[ultra thick] (-0.5,-0.5) -- (-0.8,-0.8);
\draw[ultra thick] (0.5,-0.5) -- (0.8,-0.8);
\draw[ultra thick] (0,0.866-0.5) -- (0,0.866-0.05);

\fill (-0.5,-0.5) circle (3pt);
\fill (0.5,-0.5) circle (3pt);
\fill (0,0.866-0.5) circle (3pt);
\draw (-0.5,-0.5) -- (0.5,-0.5) -- (0,0.866-0.5) -- cycle;

\end{tikzpicture}$$
\caption{A $3$-cycle.}
\label{fig:3cycle}
\end{figure}
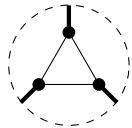

Because the polynomial is calculated recursively, the cube of resolutions for $\langle {G:M} \rangle_2(1)$ may be decomposed as follows.  First, resolve every perfect matching edge that is not incident to the $3$-cycle to get $2^{k-3}$ ``prestates."  Each prestate $P$ will consist of a number of immersed circles and a graph $G'$ whose vertices are spanned only by the three perfect matching edges of the $3$-cycle.  Note: the graph $G'$ may be immersed away from the $3$-cycle and the three perfect matching edges.  If there are $\ell$ immersed circles in the prestate $P$, then by the relations in Figure~\ref{figure:2factordef}, $$\langle P \rangle_2(1) =  2^\ell \cdot \left(\langle G' \rangle_2(1)\right).$$
We will show that in all possible cases for $G'$, $\langle G' \rangle_2(1) = 0$.  Since $\langle {G:M} \rangle_2(1)$ is made up as sums of terms of the form $\pm \langle P \rangle_2(1)$, and each of these terms is zero,  $\langle{G:M}\rangle_2(1)=0$.

Resolving all three perfect matching edges of the $3$-cycle using all open-resolutions $)($ produces the picture shown in Figure \ref{fig:3cycleres}. Thus, the ``all-open-resolution'' of the graph $G'$ of a prestate $P$  corresponds to a choice of connecting the six endpoints of Figure~\ref{fig:3cycleres} with three arcs.  

\begin{figure}[h]
$$\begin{tikzpicture}[scale=.8]

\draw[dashed] (0,-0.18) circle (1 cm);
\draw (0.15,0.866-0.05) to [out = -90, in = 135] (0.8 + 0.08,-0.8 + 0.15);
\draw (-0.15,0.866-0.05) to [out = -90, in = 45] (-0.8 - 0.08,-0.8 + 0.15);
\draw (-0.8 + 0.095,-0.8 - 0.1) to [out = 45, in = 135] (0.8 - 0.095,-0.8 - 0.1);

\end{tikzpicture}$$
\caption{Resolving $G'$ using three open-resolutions $)($ on the perfect matching edges.}
\label{fig:3cycleres}
\end{figure}
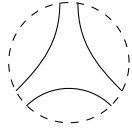

Up to symmetry of the disk, there are seven ways to connect these six endpoints up to form a set of immersed circles.  These seven configurations are shown in Figure \ref{fig:3cyclecases}. (The actual all-open-resolution of a particular $G'$ may have more circle intersections, of course, but these extra intersections do not effect any of the calculations.)  Thus, to prove $\langle G'\rangle_2(1)=0$ for all possible graphs $G'$ of the prestates, we need only show that it is zero for these seven configurations.

\begin{figure}[h]
$$\begin{tikzpicture}[scale = 0.5]

\begin{scope}[xshift = 1.5 cm]
\draw[dashed] (0,-0.18) circle (1 cm);
\draw (0.15,0.866-0.05) to [out = -90, in = 135] (0.8 + 0.08,-0.8 + 0.15);
\draw (-0.15,0.866-0.05) to [out = -90, in = 45] (-0.8 - 0.08,-0.8 + 0.15);
\draw (-0.8 + 0.095,-0.8 - 0.1) to [out = 45, in = 135] (0.8 - 0.095,-0.8 - 0.1);
\draw (0.15,0.866-0.05) to [out = 90, in = 0] (0, 1.2) to [out = 180, in = 90] (-0.15,0.866-0.05);
\draw (0.8 + 0.08,-0.8 + 0.15) to [out = -45, in = 45]  (0.8 + 0.2,-0.8 - 0.2) to [out = 225, in = -45]  (0.8 - 0.095,-0.8 - 0.1);
\draw (-0.8 - 0.08,-0.8 + 0.15) to [out = 225, in = 135] (-0.8 - 0.2,-0.8 - 0.2) to [out = -45, in = 225] (-0.8 + 0.095,-0.8 - 0.1);
\end{scope}

\begin{scope}[xshift = 4.5 cm]
\draw[dashed] (0,-0.18) circle (1 cm);
\draw (0.15,0.866-0.05) to [out = -90, in = 135] (0.8 + 0.08,-0.8 + 0.15);
\draw (-0.15,0.866-0.05) to [out = -90, in = 45] (-0.8 - 0.08,-0.8 + 0.15);
\draw (-0.8 + 0.095,-0.8 - 0.1) to [out = 45, in = 135] (0.8 - 0.095,-0.8 - 0.1);
\draw (0.15,0.866-0.05) to [out = 90, in = 0] (0, 1.2) to [out = 180, in = 90] (-0.15,0.866-0.05);
\draw (0.8 + 0.08,-0.8 + 0.15) to [out = -45, in = 0] (0, -1.75) to [out = 180, in = 225] (-0.8 - 0.08,-0.8 + 0.15) ;
\draw (0.8 - 0.095,-0.8 - 0.1) to [out = -45, in = 0] (0, -1.5) to [out = 180, in = 225] (-0.8 + 0.095,-0.8 - 0.1);
\end{scope}

\begin{scope}[xshift = 7.5 cm]
\draw[dashed] (0,-0.18) circle (1 cm);
\draw (0.15,0.866-0.05) to [out = -90, in = 135] (0.8 + 0.08,-0.8 + 0.15);
\draw (-0.15,0.866-0.05) to [out = -90, in = 45] (-0.8 - 0.08,-0.8 + 0.15);
\draw (-0.8 + 0.095,-0.8 - 0.1) to [out = 45, in = 135] (0.8 - 0.095,-0.8 - 0.1);
\draw (0.15,0.866-0.05) to [out = 90, in = 0] (0, 1.2) to [out = 180, in = 90] (-0.15,0.866-0.05);
\draw (0.8 + 0.08,-0.8 + 0.15) to [out = -45, in = -20] (0, -1.5) to [out = 160, in = 225] (-0.8 + 0.095,-0.8 - 0.1);
\draw (0.8 - 0.095,-0.8 - 0.1) to [out = -45, in = 20] (0, -1.5) to [out = 200, in = 225]  (-0.8 - 0.08,-0.8 + 0.15) ;
\end{scope}

\begin{scope}[yshift = -3.2 cm]

\draw[dashed] (0,-0.18) circle (1 cm);
\draw (0.15,0.866-0.05) to [out = -90, in = 135] (0.8 + 0.08,-0.8 + 0.15);
\draw (-0.15,0.866-0.05) to [out = -90, in = 45] (-0.8 - 0.08,-0.8 + 0.15);
\draw (-0.8 + 0.095,-0.8 - 0.1) to [out = 45, in = 135] (0.8 - 0.095,-0.8 - 0.1);
\draw (0.15,0.866-0.05) to [out = 90, in = 135] (1.2,0.7) to [out = -45, in = -45] (0.8 + 0.08,-0.8 + 0.15);
\draw (-0.15,0.866-0.05) to [out = 90, in = 45] (-1.20, 0.7) to [out = 225, in = 225] (-0.8 - 0.08,-0.8 + 0.15);
\draw (0.8 - 0.095,-0.8 - 0.1) to [out = -45, in = 0] (0, -1.6) to [out = 180, in = 225]  (-0.8 + 0.095,-0.8 - 0.1);

\begin{scope}[xshift = 3 cm]
\draw[dashed] (0,-0.18) circle (1 cm);
\draw (0.15,0.866-0.05) to [out = -90, in = 135] (0.8 + 0.08,-0.8 + 0.15);
\draw (-0.15,0.866-0.05) to [out = -90, in = 45] (-0.8 - 0.08,-0.8 + 0.15);
\draw (-0.8 + 0.095,-0.8 - 0.1) to [out = 45, in = 135] (0.8 - 0.095,-0.8 - 0.1);
\draw (0.15,0.866-0.05) to [out = 90, in = 135] (1.2,0.7) to [out = -45, in = -45] (0.8 - 0.095,-0.8 - 0.1);
\draw (-0.15,0.866-0.05) to [out = 90, in = 45] (-1.20, 0.7) to [out = 225, in = 225] (-0.8 - 0.08,-0.8 + 0.15);
\draw (0.8 + 0.08,-0.8 + 0.15) to [out = -45, in = 0] (0, -1.6) to [out = 180, in = 225]  (-0.8 + 0.095,-0.8 - 0.1);
\end{scope}

\begin{scope}[xshift = 6 cm]
\draw[dashed] (0,-0.18) circle (1 cm);
\draw (0.15,0.866-0.05) to [out = -90, in = 135] (0.8 + 0.08,-0.8 + 0.15);
\draw (-0.15,0.866-0.05) to [out = -90, in = 45] (-0.8 - 0.08,-0.8 + 0.15);
\draw (-0.8 + 0.095,-0.8 - 0.1) to [out = 45, in = 135] (0.8 - 0.095,-0.8 - 0.1);
\draw (0.15,0.866-0.05) to [out = 90, in = 135] (1.2,0.7) to [out = -45, in = -45] (0.8 - 0.095,-0.8 - 0.1);
\draw (-0.15,0.866-0.05) to [out = 90, in = 45] (-1.20, 0.7) to [out = 225, in = 225]  (-0.8 + 0.095,-0.8 - 0.1);
\draw (0.8 + 0.08,-0.8 + 0.15) to [out = -45, in = 0] (0, -1.6) to [out = 180, in = 225] (-0.8 - 0.08,-0.8 + 0.15);
\end{scope}

\begin{scope}[xshift = 9 cm]
\draw[dashed] (0,-0.18) circle (1 cm);
\draw (0.15,0.866-0.05) to [out = -90, in = 135] (0.8 + 0.08,-0.8 + 0.15);
\draw (-0.15,0.866-0.05) to [out = -90, in = 45] (-0.8 - 0.08,-0.8 + 0.15);
\draw (-0.8 + 0.095,-0.8 - 0.1) to [out = 45, in = 135] (0.8 - 0.095,-0.8 - 0.1);
\draw (-0.15,0.866-0.05) to [out = 90, in = 135] (1.2,0.7) to [out = -45, in = -45] (0.8 - 0.095,-0.8 - 0.1);
\draw (0.15,0.866-0.05) to [out = 90, in = 45] (-1.20, 0.7) to [out = 225, in = 225]  (-0.8 + 0.095,-0.8 - 0.1);
\draw (0.8 + 0.08,-0.8 + 0.15) to [out = -45, in = 0] (0, -1.6) to [out = 180, in = 225] (-0.8 - 0.08,-0.8 + 0.15);

\end{scope}

\end{scope}

\end{tikzpicture}$$
\caption{The possible $G'$'s after resolving all three perfect matchings with open-resolutions.}
\label{fig:3cyclecases}
\end{figure}
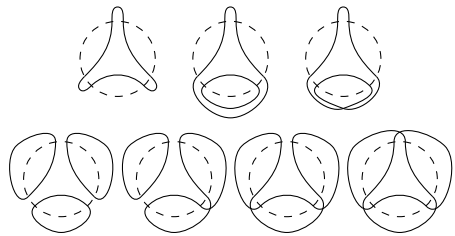

The cube of resolutions corresponding to the first all-open-resolution configuration in Figure~\ref{fig:3cyclecases} is shown in Figure \ref{fig:3cycleC1}.  For a $G'$ corresponding to this configuration, we see that $\langle G' \rangle_2(1)= 2-3\cdot 2+3\cdot 2-2=0$.  The remaining six cases are handled in the Appendix. In each case, $\langle G'\rangle_2(1)=0$, which proves the lemma.
\end{proof}

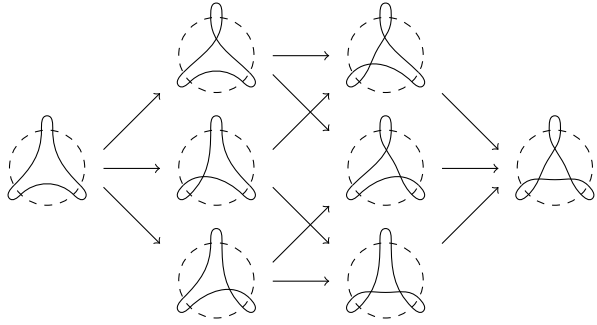
\begin{figure}[H]
$$\begin{tikzpicture}[scale = 0.5]

\begin{scope}
\draw[dashed] (0,-0.18) circle (1 cm);
\draw (0.15,0.866-0.05) to [out = -90, in = 135] (0.8 + 0.08,-0.8 + 0.15);
\draw (-0.15,0.866-0.05) to [out = -90, in = 45] (-0.8 - 0.08,-0.8 + 0.15);
\draw (-0.8 + 0.095,-0.8 - 0.1) to [out = 45, in = 135] (0.8 - 0.095,-0.8 - 0.1);

\draw (0.15,0.866-0.05) to [out = 90, in = 0] (0, 1.2) to [out = 180, in = 90] (-0.15,0.866-0.05);
\draw (0.8 + 0.08,-0.8 + 0.15) to [out = -45, in = 45]  (0.8 + 0.2,-0.8 - 0.2) to [out = 225, in = -45]  (0.8 - 0.095,-0.8 - 0.1);
\draw (-0.8 - 0.08,-0.8 + 0.15) to [out = 225, in = 135] (-0.8 - 0.2,-0.8 - 0.2) to [out = -45, in = 225] (-0.8 + 0.095,-0.8 - 0.1);
\end{scope}

\begin{scope}[xshift = 1 cm, yshift = -0.2 cm]
\draw[->] (0.5,0.5) --(2,2);
\draw[->] (0.5,0) --(2,0);
\draw[->] (0.5,-0.5) --(2,-2);
\end{scope}

\begin{scope}[xshift = 5.5 cm, yshift = -0.2 cm]
\draw[->] (0.5,0.5) --(2,2);
\draw[->] (0.5,-0.5) --(2,-2);
\draw[->] (0.5,3) --(2,3);
\draw[->] (0.5,-3) --(2,-3);
\draw[->] (0.5,-2.5) --(2,-1);
\draw[->] (0.5,2.5) --(2,1);
\end{scope}

\begin{scope}[xshift = 8.5 cm, yshift = -0.2 cm]
\draw[->] (2,2) -- (3.5,0.5);
\draw[->] (2,0) -- (3.5,0);
\draw[->] (2,-2) -- (3.5,-0.5);
\end{scope}

\begin{scope}[xshift = 4.5 cm, yshift = 3 cm]
\draw[dashed] (0,-0.18) circle (1 cm);
\draw (-0.15,0.866-0.05) to [out = -90, in = 135] (0.8 + 0.08,-0.8 + 0.15);
\draw (0.15,0.866-0.05) to [out = -90, in = 45] (-0.8 - 0.08,-0.8 + 0.15);
\draw (-0.8 + 0.095,-0.8 - 0.1) to [out = 45, in = 135] (0.8 - 0.095,-0.8 - 0.1);

\draw (0.15,0.866-0.05) to [out = 90, in = 0] (0, 1.2) to [out = 180, in = 90] (-0.15,0.866-0.05);
\draw (0.8 + 0.08,-0.8 + 0.15) to [out = -45, in = 45]  (0.8 + 0.2,-0.8 - 0.2) to [out = 225, in = -45]  (0.8 - 0.095,-0.8 - 0.1);
\draw (-0.8 - 0.08,-0.8 + 0.15) to [out = 225, in = 135] (-0.8 - 0.2,-0.8 - 0.2) to [out = -45, in = 225] (-0.8 + 0.095,-0.8 - 0.1);
\end{scope}

\begin{scope}[xshift = 4.5 cm, yshift = 0 cm]
\draw[dashed] (0,-0.18) circle (1 cm);
\draw (0.15,0.866-0.05) to [out = -90, in = 135] (0.8 + 0.08,-0.8 + 0.15);
\draw (-0.15,0.866-0.05) to [out = -90, in = 45] (-0.8 + 0.095,-0.8 - 0.1);
\draw (-0.8 - 0.08,-0.8 + 0.15) to [out = 45, in = 135] (0.8 - 0.095,-0.8 - 0.1);

\draw (0.15,0.866-0.05) to [out = 90, in = 0] (0, 1.2) to [out = 180, in = 90] (-0.15,0.866-0.05);
\draw (0.8 + 0.08,-0.8 + 0.15) to [out = -45, in = 45]  (0.8 + 0.2,-0.8 - 0.2) to [out = 225, in = -45]  (0.8 - 0.095,-0.8 - 0.1);
\draw (-0.8 - 0.08,-0.8 + 0.15) to [out = 225, in = 135] (-0.8 - 0.2,-0.8 - 0.2) to [out = -45, in = 225] (-0.8 + 0.095,-0.8 - 0.1);
\end{scope}

\begin{scope}[xshift = 4.5 cm, yshift = -3 cm]
\draw[dashed] (0,-0.18) circle (1 cm);
\draw (0.15,0.866-0.05) to [out = -90, in = 135] (0.8 - 0.095,-0.8 - 0.1);
\draw (-0.15,0.866-0.05) to [out = -90, in = 45] (-0.8 - 0.08,-0.8 + 0.15);
\draw (-0.8 + 0.095,-0.8 - 0.1) to [out = 45, in = 135] (0.8 + 0.08,-0.8 + 0.15);

\draw (0.15,0.866-0.05) to [out = 90, in = 0] (0, 1.2) to [out = 180, in = 90] (-0.15,0.866-0.05);
\draw (0.8 + 0.08,-0.8 + 0.15) to [out = -45, in = 45]  (0.8 + 0.2,-0.8 - 0.2) to [out = 225, in = -45]  (0.8 - 0.095,-0.8 - 0.1);
\draw (-0.8 - 0.08,-0.8 + 0.15) to [out = 225, in = 135] (-0.8 - 0.2,-0.8 - 0.2) to [out = -45, in = 225] (-0.8 + 0.095,-0.8 - 0.1);
\end{scope}

\begin{scope}[xshift = 4.5 cm]
\begin{scope}[xshift = 4.5 cm, yshift = 3 cm]
\draw[dashed] (0,-0.18) circle (1 cm);
\draw (-0.15,0.866-0.05) to [out = -90, in = 135] (0.8 + 0.08,-0.8 + 0.15);
\draw (0.15,0.866-0.05) to [out = -90,  in = 60] (-0.2, 0) to [out = 240, in = 45] (-0.8 + 0.095,-0.8 - 0.1);
\draw (-0.8 - 0.08,-0.8 + 0.15) to [out = 45, in = 135] (0.8 - 0.095,-0.8 - 0.1);

\draw (0.15,0.866-0.05) to [out = 90, in = 0] (0, 1.2) to [out = 180, in = 90] (-0.15,0.866-0.05);
\draw (0.8 + 0.08,-0.8 + 0.15) to [out = -45, in = 45]  (0.8 + 0.2,-0.8 - 0.2) to [out = 225, in = -45]  (0.8 - 0.095,-0.8 - 0.1);
\draw (-0.8 - 0.08,-0.8 + 0.15) to [out = 225, in = 135] (-0.8 - 0.2,-0.8 - 0.2) to [out = -45, in = 225] (-0.8 + 0.095,-0.8 - 0.1);
\end{scope}

\begin{scope}[xshift = 4.5 cm, yshift = 0 cm]
\draw[dashed] (0,-0.18) circle (1 cm);
\draw (-0.15,0.866-0.05) to [out = -90, in = 120] (0.2, 0) to [out = -60, in = 135] (0.8 - 0.095,-0.8 - 0.1);
\draw (0.15,0.866-0.05) to [out = -90, in = 45] (-0.8 - 0.08,-0.8 + 0.15);
\draw (-0.8 + 0.095,-0.8 - 0.1) to [out = 45, in = 135] (0.8 + 0.08,-0.8 + 0.15);

\draw (0.15,0.866-0.05) to [out = 90, in = 0] (0, 1.2) to [out = 180, in = 90] (-0.15,0.866-0.05);
\draw (0.8 + 0.08,-0.8 + 0.15) to [out = -45, in = 45]  (0.8 + 0.2,-0.8 - 0.2) to [out = 225, in = -45]  (0.8 - 0.095,-0.8 - 0.1);
\draw (-0.8 - 0.08,-0.8 + 0.15) to [out = 225, in = 135] (-0.8 - 0.2,-0.8 - 0.2) to [out = -45, in = 225] (-0.8 + 0.095,-0.8 - 0.1);
\end{scope}

\begin{scope}[xshift = 4.5 cm, yshift = -3 cm]
\draw[dashed] (0,-0.18) circle (1 cm);
\draw (0.15,0.866-0.05) to [out = -90, in = 135] (0.8 - 0.095,-0.8 - 0.1);
\draw (-0.15,0.866-0.05) to [out = -90, in = 45] (-0.8 + 0.095,-0.8 - 0.1);
\draw (-0.8 - 0.08,-0.8 + 0.15) to [out = 45, in = 180] (0, -0.5) to [out = 0, in = 135] (0.8 + 0.08,-0.8 + 0.15);

\draw (0.15,0.866-0.05) to [out = 90, in = 0] (0, 1.2) to [out = 180, in = 90] (-0.15,0.866-0.05);
\draw (0.8 + 0.08,-0.8 + 0.15) to [out = -45, in = 45]  (0.8 + 0.2,-0.8 - 0.2) to [out = 225, in = -45]  (0.8 - 0.095,-0.8 - 0.1);
\draw (-0.8 - 0.08,-0.8 + 0.15) to [out = 225, in = 135] (-0.8 - 0.2,-0.8 - 0.2) to [out = -45, in = 225] (-0.8 + 0.095,-0.8 - 0.1);
\end{scope}

\end{scope}

\begin{scope}[xshift = 13.5 cm]
\begin{scope}
\draw[dashed] (0,-0.18) circle (1 cm);
\draw (-0.15,0.866-0.05) to [out = -90, in = 120] (0.2, 0) to [out = -60, in = 135] (0.8 - 0.095,-0.8 - 0.1);
\draw (0.15,0.866-0.05) to [out = -90,  in = 60] (-0.2, 0) to [out = 240, in = 45] (-0.8 + 0.095,-0.8 - 0.1);
\draw (-0.8 - 0.08,-0.8 + 0.15) to [out = 45, in = 180] (0, -0.5) to [out = 0, in = 135] (0.8 + 0.08,-0.8 + 0.15);

\draw (0.15,0.866-0.05) to [out = 90, in = 0] (0, 1.2) to [out = 180, in = 90] (-0.15,0.866-0.05);
\draw (0.8 + 0.08,-0.8 + 0.15) to [out = -45, in = 45]  (0.8 + 0.2,-0.8 - 0.2) to [out = 225, in = -45]  (0.8 - 0.095,-0.8 - 0.1);
\draw (-0.8 - 0.08,-0.8 + 0.15) to [out = 225, in = 135] (-0.8 - 0.2,-0.8 - 0.2) to [out = -45, in = 225] (-0.8 + 0.095,-0.8 - 0.1);
\end{scope}

\end{scope}

\end{tikzpicture}$$
\caption{For any $G'$ corresponding to the first configuration in Figure~\ref{fig:3cyclecases}, $\langle G' \rangle_2(1)=2 - 3 \cdot 2 + 3 \cdot 2 - 2 = 0$.}
\label{fig:3cycleC1}
\end{figure}

\section{Proof of Theorem \ref{thm:MainTheorem}}

Now we are ready to prove Thoerem~\ref{thm:MainTheorem}. Because both $\langle G:M\rangle_2(z)$ and $|G:M|_2$ are multiplicative under disjoint unions, we need only prove Theorem~\ref{thm:MainTheorem} for the case when $G$ is a connected graph.  Furthermore, by Lemma~\ref{lem:bridgelemma}, the theorem is trivially true if $G$ has a bridge.  Therefore we need only prove the theorem for connected, bridgeless planar trivalent graphs.

The proof proceeds by induction on the number of edges in the perfect matching $M$.  If the perfect matching has a single edge, then $(G,M)$ is the $\theta$ graph, and the theorem is  verified by inspection.  

Assume the theorem is true for  any planar trivalent graph with a perfect matching having $n-1$ edges.  Let $(G,M)$ be a connected, bridgeless planar trivalent graph with $n$ perfect matching edges.  By Lemma \ref{lem:IHMaxCycle}, the graph is IH-equivalent to $(G',M')$, where $G'(V',E'\setminus M')$ has a cycle of length at most $3$.  Lemma \ref{lem:IHLemma}, together with the inductive hypothesis, implies that we need only verify that the result holds for $(G', M')$. 

If $G'(V',E'\setminus M')$ has a cycle of length 3, then Proposition \ref{prop:count}  and Lemma \ref{lem:TriangleLemma} imply the result.  If $G'(V',E'\setminus M')$ has a cycle of length 1, then $G'$ has a bridge and Lemma~\ref{lem:bridgelemma} implies the result.  Lastly, if $G'(V',E'\setminus M')$ has no cycle of length 1 or 3, but does have a cycle of length 2, then Lemma \ref{lem:bubblerelation} (the Bubble Lemma), together with the inductive hypothesis, implies the result.  \qed

\section{Concluding Remarks}

In working with the $2$-factor polynomial, the authors noticed a close relationship between planar trivalent graphs with perfect matchings and virtual links, which will be explored in \cite{BKR}. While Theorem \ref{thm:MainTheorem} was first proved in this paper, in the context of planar trivalent graphs, this relationship suggested that an equivalent result might hold for the Jones polynomial of a virtual link.  In a later paper (cf. Theorem 5 of \cite{BaldKauffMc}), we prove the corresponding theorem in virtual link theory.

The graph-theoretic proof presented in this paper is distinctly different than the knot-theoretic proof of \cite{BaldKauffMc}.  We feel this proof has value from a graph theory perspective.  At the heart of our argument, we are essentially inducting on the number cycles in $G\setminus M$, which reduces the problem to studying the base-case of a Hamiltonian graph, i.e., a graph $G'$ and perfect matching $M'$ with the same number of vertices and edges as $(G,M)$ but with a single cycle for $G'\setminus M'$. We believe that this technique will be of value and interest to graph theorists when proving theorems like this one.

Finally, this paper, together with \cite{BaldCohomology}, \cite{BaldKauffMc}, and \cite{BKR}, establishes a relationship between the categorification of the $2$-factor polynomial and virtual Khovanov homology. Other fruitful relationships between graph and knot polynomials and their categorifications are well-known (cf. \cite{Thistlethwaite, KauffmanTutte, DFKLS, LM, DL}). In each of these relationships, the graph theoretic and knot theoretic viewpoints mutually inform and reinforce one another. We see this connection between the categorification of the $2$-factor polynomial and virtual Khovanov homology as a promising avenue for future research.

\appendix
\section{Appendix}
The remaining cases for Lemma \ref{lem:TriangleLemma} are presented here.

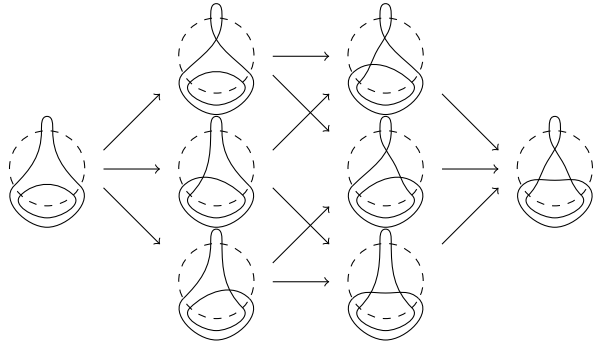
\begin{figure}[H]
$$\begin{tikzpicture}[scale = 0.5]

\begin{scope}
\draw[dashed] (0,-0.18) circle (1 cm);
\draw (0.15,0.866-0.05) to [out = -90, in = 135] (0.8 + 0.08,-0.8 + 0.15);
\draw (-0.15,0.866-0.05) to [out = -90, in = 45] (-0.8 - 0.08,-0.8 + 0.15);
\draw (-0.8 + 0.095,-0.8 - 0.1) to [out = 45, in = 135] (0.8 - 0.095,-0.8 - 0.1);

\draw (0.15,0.866-0.05) to [out = 90, in = 0] (0, 1.2) to [out = 180, in = 90] (-0.15,0.866-0.05);
\draw (0.8 + 0.08,-0.8 + 0.15) to [out = -45, in = 0] (0, -1.75) to [out = 180, in = 225] (-0.8 - 0.08,-0.8 + 0.15) ;
\draw (0.8 - 0.095,-0.8 - 0.1) to [out = -45, in = 0] (0, -1.5) to [out = 180, in = 225] (-0.8 + 0.095,-0.8 - 0.1);
\end{scope}

\begin{scope}[xshift = 1 cm, yshift = -0.2 cm]
\draw[->] (0.5,0.5) --(2,2);
\draw[->] (0.5,0) --(2,0);
\draw[->] (0.5,-0.5) --(2,-2);
\end{scope}

\begin{scope}[xshift = 5.5 cm, yshift = -0.2 cm]
\draw[->] (0.5,0.5) --(2,2);
\draw[->] (0.5,-0.5) --(2,-2);
\draw[->] (0.5,3) --(2,3);
\draw[->] (0.5,-3) --(2,-3);
\draw[->] (0.5,-2.5) --(2,-1);
\draw[->] (0.5,2.5) --(2,1);
\end{scope}

\begin{scope}[xshift = 8.5 cm, yshift = -0.2 cm]
\draw[->] (2,2) -- (3.5,0.5);
\draw[->] (2,0) -- (3.5,0);
\draw[->] (2,-2) -- (3.5,-0.5);
\end{scope}

\begin{scope}[xshift = 4.5 cm, yshift = 3 cm]
\draw[dashed] (0,-0.18) circle (1 cm);
\draw (-0.15,0.866-0.05) to [out = -90, in = 135] (0.8 + 0.08,-0.8 + 0.15);
\draw (0.15,0.866-0.05) to [out = -90, in = 45] (-0.8 - 0.08,-0.8 + 0.15);
\draw (-0.8 + 0.095,-0.8 - 0.1) to [out = 45, in = 135] (0.8 - 0.095,-0.8 - 0.1);

\draw (0.15,0.866-0.05) to [out = 90, in = 0] (0, 1.2) to [out = 180, in = 90] (-0.15,0.866-0.05);
\draw (0.8 + 0.08,-0.8 + 0.15) to [out = -45, in = 0] (0, -1.75) to [out = 180, in = 225] (-0.8 - 0.08,-0.8 + 0.15) ;
\draw (0.8 - 0.095,-0.8 - 0.1) to [out = -45, in = 0] (0, -1.5) to [out = 180, in = 225] (-0.8 + 0.095,-0.8 - 0.1);
\end{scope}

\begin{scope}[xshift = 4.5 cm, yshift = 0 cm]
\draw[dashed] (0,-0.18) circle (1 cm);
\draw (0.15,0.866-0.05) to [out = -90, in = 135] (0.8 + 0.08,-0.8 + 0.15);
\draw (-0.15,0.866-0.05) to [out = -90, in = 45] (-0.8 + 0.095,-0.8 - 0.1);
\draw (-0.8 - 0.08,-0.8 + 0.15) to [out = 45, in = 135] (0.8 - 0.095,-0.8 - 0.1);

\draw (0.15,0.866-0.05) to [out = 90, in = 0] (0, 1.2) to [out = 180, in = 90] (-0.15,0.866-0.05);
\draw (0.8 + 0.08,-0.8 + 0.15) to [out = -45, in = 0] (0, -1.75) to [out = 180, in = 225] (-0.8 - 0.08,-0.8 + 0.15) ;
\draw (0.8 - 0.095,-0.8 - 0.1) to [out = -45, in = 0] (0, -1.5) to [out = 180, in = 225] (-0.8 + 0.095,-0.8 - 0.1);
\end{scope}

\begin{scope}[xshift = 4.5 cm, yshift = -3 cm]
\draw[dashed] (0,-0.18) circle (1 cm);
\draw (0.15,0.866-0.05) to [out = -90, in = 135] (0.8 - 0.095,-0.8 - 0.1);
\draw (-0.15,0.866-0.05) to [out = -90, in = 45] (-0.8 - 0.08,-0.8 + 0.15);
\draw (-0.8 + 0.095,-0.8 - 0.1) to [out = 45, in = 135] (0.8 + 0.08,-0.8 + 0.15);

\draw (0.15,0.866-0.05) to [out = 90, in = 0] (0, 1.2) to [out = 180, in = 90] (-0.15,0.866-0.05);
\draw (0.8 + 0.08,-0.8 + 0.15) to [out = -45, in = 0] (0, -1.75) to [out = 180, in = 225] (-0.8 - 0.08,-0.8 + 0.15) ;
\draw (0.8 - 0.095,-0.8 - 0.1) to [out = -45, in = 0] (0, -1.5) to [out = 180, in = 225] (-0.8 + 0.095,-0.8 - 0.1);
\end{scope}

\begin{scope}[xshift = 4.5 cm]
\begin{scope}[xshift = 4.5 cm, yshift = 3 cm]
\draw[dashed] (0,-0.18) circle (1 cm);
\draw (-0.15,0.866-0.05) to [out = -90, in = 135] (0.8 + 0.08,-0.8 + 0.15);
\draw (0.15,0.866-0.05) to [out = -90,  in = 60] (-0.2, 0) to [out = 240, in = 45] (-0.8 + 0.095,-0.8 - 0.1);
\draw (-0.8 - 0.08,-0.8 + 0.15) to [out = 45, in = 135] (0.8 - 0.095,-0.8 - 0.1);

\draw (0.15,0.866-0.05) to [out = 90, in = 0] (0, 1.2) to [out = 180, in = 90] (-0.15,0.866-0.05);
\draw (0.8 + 0.08,-0.8 + 0.15) to [out = -45, in = 0] (0, -1.75) to [out = 180, in = 225] (-0.8 - 0.08,-0.8 + 0.15) ;
\draw (0.8 - 0.095,-0.8 - 0.1) to [out = -45, in = 0] (0, -1.5) to [out = 180, in = 225] (-0.8 + 0.095,-0.8 - 0.1);
\end{scope}

\begin{scope}[xshift = 4.5 cm, yshift = 0 cm]
\draw[dashed] (0,-0.18) circle (1 cm);
\draw (-0.15,0.866-0.05) to [out = -90, in = 120] (0.2, 0) to [out = -60, in = 135] (0.8 - 0.095,-0.8 - 0.1);
\draw (0.15,0.866-0.05) to [out = -90, in = 45] (-0.8 - 0.08,-0.8 + 0.15);
\draw (-0.8 + 0.095,-0.8 - 0.1) to [out = 45, in = 135] (0.8 + 0.08,-0.8 + 0.15);

\draw (0.15,0.866-0.05) to [out = 90, in = 0] (0, 1.2) to [out = 180, in = 90] (-0.15,0.866-0.05);
\draw (0.8 + 0.08,-0.8 + 0.15) to [out = -45, in = 0] (0, -1.75) to [out = 180, in = 225] (-0.8 - 0.08,-0.8 + 0.15) ;
\draw (0.8 - 0.095,-0.8 - 0.1) to [out = -45, in = 0] (0, -1.5) to [out = 180, in = 225] (-0.8 + 0.095,-0.8 - 0.1);
\end{scope}

\begin{scope}[xshift = 4.5 cm, yshift = -3 cm]
\draw[dashed] (0,-0.18) circle (1 cm);
\draw (0.15,0.866-0.05) to [out = -90, in = 135] (0.8 - 0.095,-0.8 - 0.1);
\draw (-0.15,0.866-0.05) to [out = -90, in = 45] (-0.8 + 0.095,-0.8 - 0.1);
\draw (-0.8 - 0.08,-0.8 + 0.15) to [out = 45, in = 180] (0, -0.5) to [out = 0, in = 135] (0.8 + 0.08,-0.8 + 0.15);

\draw (0.15,0.866-0.05) to [out = 90, in = 0] (0, 1.2) to [out = 180, in = 90] (-0.15,0.866-0.05);
\draw (0.8 + 0.08,-0.8 + 0.15) to [out = -45, in = 0] (0, -1.75) to [out = 180, in = 225] (-0.8 - 0.08,-0.8 + 0.15) ;
\draw (0.8 - 0.095,-0.8 - 0.1) to [out = -45, in = 0] (0, -1.5) to [out = 180, in = 225] (-0.8 + 0.095,-0.8 - 0.1);
\end{scope}

\end{scope}

\begin{scope}[xshift = 13.5 cm]
\begin{scope}
\draw[dashed] (0,-0.18) circle (1 cm);
\draw (-0.15,0.866-0.05) to [out = -90, in = 120] (0.2, 0) to [out = -60, in = 135] (0.8 - 0.095,-0.8 - 0.1);
\draw (0.15,0.866-0.05) to [out = -90,  in = 60] (-0.2, 0) to [out = 240, in = 45] (-0.8 + 0.095,-0.8 - 0.1);
\draw (-0.8 - 0.08,-0.8 + 0.15) to [out = 45, in = 180] (0, -0.5) to [out = 0, in = 135] (0.8 + 0.08,-0.8 + 0.15);

\draw (0.15,0.866-0.05) to [out = 90, in = 0] (0, 1.2) to [out = 180, in = 90] (-0.15,0.866-0.05);
\draw (0.8 + 0.08,-0.8 + 0.15) to [out = -45, in = 0] (0, -1.75) to [out = 180, in = 225] (-0.8 - 0.08,-0.8 + 0.15) ;
\draw (0.8 - 0.095,-0.8 - 0.1) to [out = -45, in = 0] (0, -1.5) to [out = 180, in = 225] (-0.8 + 0.095,-0.8 - 0.1);
\end{scope}

\end{scope}

\end{tikzpicture}$$
\caption{For any $G'$ corresponding to the second configuration in Figure~\ref{fig:3cyclecases}, $\langle G' \rangle_2(1)=2^2 - 2^2 - 2 \cdot 2 + 2 \cdot 2 +  2^2 - 2^2 = 0$.}
\label{fig:3cycleC2}
\end{figure}

\begin{figure}[H]
$$\begin{tikzpicture}[scale = 0.5]

\begin{scope}
\draw[dashed] (0,-0.18) circle (1 cm);
\draw (0.15,0.866-0.05) to [out = -90, in = 135] (0.8 + 0.08,-0.8 + 0.15);
\draw (-0.15,0.866-0.05) to [out = -90, in = 45] (-0.8 - 0.08,-0.8 + 0.15);
\draw (-0.8 + 0.095,-0.8 - 0.1) to [out = 45, in = 135] (0.8 - 0.095,-0.8 - 0.1);

\draw (0.15,0.866-0.05) to [out = 90, in = 0] (0, 1.2) to [out = 180, in = 90] (-0.15,0.866-0.05);
\draw (0.8 + 0.08,-0.8 + 0.15) to [out = -45, in = -20] (0, -1.5) to [out = 160, in = 225] (-0.8 + 0.095,-0.8 - 0.1);
\draw (0.8 - 0.095,-0.8 - 0.1) to [out = -45, in = 20] (0, -1.5) to [out = 200, in = 225]  (-0.8 - 0.08,-0.8 + 0.15);
\end{scope}

\begin{scope}[xshift = 1 cm, yshift = -0.2 cm]
\draw[->] (0.5,0.5) --(2,2);
\draw[->] (0.5,0) --(2,0);
\draw[->] (0.5,-0.5) --(2,-2);
\end{scope}

\begin{scope}[xshift = 5.5 cm, yshift = -0.2 cm]
\draw[->] (0.5,0.5) --(2,2);
\draw[->] (0.5,-0.5) --(2,-2);
\draw[->] (0.5,3) --(2,3);
\draw[->] (0.5,-3) --(2,-3);
\draw[->] (0.5,-2.5) --(2,-1);
\draw[->] (0.5,2.5) --(2,1);
\end{scope}

\begin{scope}[xshift = 8.5 cm, yshift = -0.2 cm]
\draw[->] (2,2) -- (3.5,0.5);
\draw[->] (2,0) -- (3.5,0);
\draw[->] (2,-2) -- (3.5,-0.5);
\end{scope}

\begin{scope}[xshift = 4.5 cm, yshift = 3 cm]
\draw[dashed] (0,-0.18) circle (1 cm);
\draw (-0.15,0.866-0.05) to [out = -90, in = 135] (0.8 + 0.08,-0.8 + 0.15);
\draw (0.15,0.866-0.05) to [out = -90, in = 45] (-0.8 - 0.08,-0.8 + 0.15);
\draw (-0.8 + 0.095,-0.8 - 0.1) to [out = 45, in = 135] (0.8 - 0.095,-0.8 - 0.1);
\draw (0.15,0.866-0.05) to [out = 90, in = 0] (0, 1.2) to [out = 180, in = 90] (-0.15,0.866-0.05);
\draw (0.8 + 0.08,-0.8 + 0.15) to [out = -45, in = -20] (0, -1.5) to [out = 160, in = 225] (-0.8 + 0.095,-0.8 - 0.1);
\draw (0.8 - 0.095,-0.8 - 0.1) to [out = -45, in = 20] (0, -1.5) to [out = 200, in = 225]  (-0.8 - 0.08,-0.8 + 0.15);
\end{scope}

\begin{scope}[xshift = 4.5 cm, yshift = 0 cm]
\draw[dashed] (0,-0.18) circle (1 cm);
\draw (0.15,0.866-0.05) to [out = -90, in = 135] (0.8 + 0.08,-0.8 + 0.15);
\draw (-0.15,0.866-0.05) to [out = -90, in = 45] (-0.8 + 0.095,-0.8 - 0.1);
\draw (-0.8 - 0.08,-0.8 + 0.15) to [out = 45, in = 135] (0.8 - 0.095,-0.8 - 0.1);
\draw (0.15,0.866-0.05) to [out = 90, in = 0] (0, 1.2) to [out = 180, in = 90] (-0.15,0.866-0.05);
\draw (0.8 + 0.08,-0.8 + 0.15) to [out = -45, in = -20] (0, -1.5) to [out = 160, in = 225] (-0.8 + 0.095,-0.8 - 0.1);
\draw (0.8 - 0.095,-0.8 - 0.1) to [out = -45, in = 20] (0, -1.5) to [out = 200, in = 225]  (-0.8 - 0.08,-0.8 + 0.15);
\end{scope}

\begin{scope}[xshift = 4.5 cm, yshift = -3 cm]
\draw[dashed] (0,-0.18) circle (1 cm);
\draw (0.15,0.866-0.05) to [out = -90, in = 135] (0.8 - 0.095,-0.8 - 0.1);
\draw (-0.15,0.866-0.05) to [out = -90, in = 45] (-0.8 - 0.08,-0.8 + 0.15);
\draw (-0.8 + 0.095,-0.8 - 0.1) to [out = 45, in = 135] (0.8 + 0.08,-0.8 + 0.15);
\draw (0.15,0.866-0.05) to [out = 90, in = 0] (0, 1.2) to [out = 180, in = 90] (-0.15,0.866-0.05);
\draw (0.8 + 0.08,-0.8 + 0.15) to [out = -45, in = -20] (0, -1.5) to [out = 160, in = 225] (-0.8 + 0.095,-0.8 - 0.1);
\draw (0.8 - 0.095,-0.8 - 0.1) to [out = -45, in = 20] (0, -1.5) to [out = 200, in = 225]  (-0.8 - 0.08,-0.8 + 0.15);
\end{scope}

\begin{scope}[xshift = 4.5 cm]
\begin{scope}[xshift = 4.5 cm, yshift = 3 cm]
\draw[dashed] (0,-0.18) circle (1 cm);
\draw (-0.15,0.866-0.05) to [out = -90, in = 135] (0.8 + 0.08,-0.8 + 0.15);
\draw (0.15,0.866-0.05) to [out = -90,  in = 60] (-0.2, 0) to [out = 240, in = 45] (-0.8 + 0.095,-0.8 - 0.1);
\draw (-0.8 - 0.08,-0.8 + 0.15) to [out = 45, in = 135] (0.8 - 0.095,-0.8 - 0.1);
\draw (0.15,0.866-0.05) to [out = 90, in = 0] (0, 1.2) to [out = 180, in = 90] (-0.15,0.866-0.05);
\draw (0.8 + 0.08,-0.8 + 0.15) to [out = -45, in = -20] (0, -1.5) to [out = 160, in = 225] (-0.8 + 0.095,-0.8 - 0.1);
\draw (0.8 - 0.095,-0.8 - 0.1) to [out = -45, in = 20] (0, -1.5) to [out = 200, in = 225]  (-0.8 - 0.08,-0.8 + 0.15);
\end{scope}

\begin{scope}[xshift = 4.5 cm, yshift = 0 cm]
\draw[dashed] (0,-0.18) circle (1 cm);
\draw (-0.15,0.866-0.05) to [out = -90, in = 120] (0.2, 0) to [out = -60, in = 135] (0.8 - 0.095,-0.8 - 0.1);
\draw (0.15,0.866-0.05) to [out = -90, in = 45] (-0.8 - 0.08,-0.8 + 0.15);
\draw (-0.8 + 0.095,-0.8 - 0.1) to [out = 45, in = 135] (0.8 + 0.08,-0.8 + 0.15);
\draw (0.15,0.866-0.05) to [out = 90, in = 0] (0, 1.2) to [out = 180, in = 90] (-0.15,0.866-0.05);
\draw (0.8 + 0.08,-0.8 + 0.15) to [out = -45, in = -20] (0, -1.5) to [out = 160, in = 225] (-0.8 + 0.095,-0.8 - 0.1);
\draw (0.8 - 0.095,-0.8 - 0.1) to [out = -45, in = 20] (0, -1.5) to [out = 200, in = 225]  (-0.8 - 0.08,-0.8 + 0.15);
\end{scope}

\begin{scope}[xshift = 4.5 cm, yshift = -3 cm]
\draw[dashed] (0,-0.18) circle (1 cm);
\draw (0.15,0.866-0.05) to [out = -90, in = 135] (0.8 - 0.095,-0.8 - 0.1);
\draw (-0.15,0.866-0.05) to [out = -90, in = 45] (-0.8 + 0.095,-0.8 - 0.1);
\draw (-0.8 - 0.08,-0.8 + 0.15) to [out = 45, in = 180] (0, -0.5) to [out = 0, in = 135] (0.8 + 0.08,-0.8 + 0.15);
\draw (0.15,0.866-0.05) to [out = 90, in = 0] (0, 1.2) to [out = 180, in = 90] (-0.15,0.866-0.05);
\draw (0.8 + 0.08,-0.8 + 0.15) to [out = -45, in = -20] (0, -1.5) to [out = 160, in = 225] (-0.8 + 0.095,-0.8 - 0.1);
\draw (0.8 - 0.095,-0.8 - 0.1) to [out = -45, in = 20] (0, -1.5) to [out = 200, in = 225]  (-0.8 - 0.08,-0.8 + 0.15);
\end{scope}

\end{scope}

\begin{scope}[xshift = 13.5 cm]
\begin{scope}
\draw[dashed] (0,-0.18) circle (1 cm);
\draw (-0.15,0.866-0.05) to [out = -90, in = 120] (0.2, 0) to [out = -60, in = 135] (0.8 - 0.095,-0.8 - 0.1);
\draw (0.15,0.866-0.05) to [out = -90,  in = 60] (-0.2, 0) to [out = 240, in = 45] (-0.8 + 0.095,-0.8 - 0.1);
\draw (-0.8 - 0.08,-0.8 + 0.15) to [out = 45, in = 180] (0, -0.5) to [out = 0, in = 135] (0.8 + 0.08,-0.8 + 0.15);
\draw (0.15,0.866-0.05) to [out = 90, in = 0] (0, 1.2) to [out = 180, in = 90] (-0.15,0.866-0.05);
\draw (0.8 + 0.08,-0.8 + 0.15) to [out = -45, in = -20] (0, -1.5) to [out = 160, in = 225] (-0.8 + 0.095,-0.8 - 0.1);
\draw (0.8 - 0.095,-0.8 - 0.1) to [out = -45, in = 20] (0, -1.5) to [out = 200, in = 225]  (-0.8 - 0.08,-0.8 + 0.15);
\end{scope}

\end{scope}

\end{tikzpicture}$$
\caption{For any $G'$ corresponding to the third configuration in Figure~\ref{fig:3cyclecases}, $\langle G' \rangle_2(1)=2 - 2 \cdot 2^2 - 2 + 2 \cdot 2^2 +  2 - 2 = 0$.}
\label{fig:3cycleC3}
\end{figure}
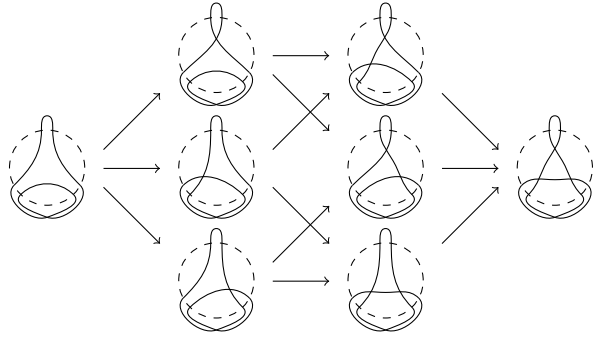

\begin{figure}[H]
$$\begin{tikzpicture}[scale = 0.5]

\begin{scope}
\draw[dashed] (0,-0.18) circle (1 cm);
\draw (0.15,0.866-0.05) to [out = -90, in = 135] (0.8 + 0.08,-0.8 + 0.15);
\draw (-0.15,0.866-0.05) to [out = -90, in = 45] (-0.8 - 0.08,-0.8 + 0.15);
\draw (-0.8 + 0.095,-0.8 - 0.1) to [out = 45, in = 135] (0.8 - 0.095,-0.8 - 0.1);

\draw (0.15,0.866-0.05) to [out = 90, in = 135] (1.2,0.7) to [out = -45, in = -45] (0.8 + 0.08,-0.8 + 0.15);
\draw (-0.15,0.866-0.05) to [out = 90, in = 45] (-1.20, 0.7) to [out = 225, in = 225] (-0.8 - 0.08,-0.8 + 0.15);
\draw (0.8 - 0.095,-0.8 - 0.1) to [out = -45, in = 0] (0, -1.6) to [out = 180, in = 225]  (-0.8 + 0.095,-0.8 - 0.1);
\end{scope}

\begin{scope}[xshift = 1 cm, yshift = -0.2 cm]
\draw[->] (0.5,0.5) --(2,2);
\draw[->] (0.5,0) --(2,0);
\draw[->] (0.5,-0.5) --(2,-2);
\end{scope}

\begin{scope}[xshift = 5.5 cm, yshift = -0.2 cm]
\draw[->] (0.5,0.5) --(2,2);
\draw[->] (0.5,-0.5) --(2,-2);
\draw[->] (0.5,3) --(2,3);
\draw[->] (0.5,-3) --(2,-3);
\draw[->] (0.5,-2.5) --(2,-1);
\draw[->] (0.5,2.5) --(2,1);
\end{scope}

\begin{scope}[xshift = 8.5 cm, yshift = -0.2 cm]
\draw[->] (2,2) -- (3.5,0.5);
\draw[->] (2,0) -- (3.5,0);
\draw[->] (2,-2) -- (3.5,-0.5);
\end{scope}

\begin{scope}[xshift = 4.5 cm, yshift = 3 cm]
\draw[dashed] (0,-0.18) circle (1 cm);
\draw (-0.15,0.866-0.05) to [out = -90, in = 135] (0.8 + 0.08,-0.8 + 0.15);
\draw (0.15,0.866-0.05) to [out = -90, in = 45] (-0.8 - 0.08,-0.8 + 0.15);
\draw (-0.8 + 0.095,-0.8 - 0.1) to [out = 45, in = 135] (0.8 - 0.095,-0.8 - 0.1);

\draw (0.15,0.866-0.05) to [out = 90, in = 135] (1.2,0.7) to [out = -45, in = -45] (0.8 + 0.08,-0.8 + 0.15);
\draw (-0.15,0.866-0.05) to [out = 90, in = 45] (-1.20, 0.7) to [out = 225, in = 225] (-0.8 - 0.08,-0.8 + 0.15);
\draw (0.8 - 0.095,-0.8 - 0.1) to [out = -45, in = 0] (0, -1.6) to [out = 180, in = 225]  (-0.8 + 0.095,-0.8 - 0.1);
\end{scope}

\begin{scope}[xshift = 4.5 cm, yshift = 0 cm]
\draw[dashed] (0,-0.18) circle (1 cm);
\draw (0.15,0.866-0.05) to [out = -90, in = 135] (0.8 + 0.08,-0.8 + 0.15);
\draw (-0.15,0.866-0.05) to [out = -90, in = 45] (-0.8 + 0.095,-0.8 - 0.1);
\draw (-0.8 - 0.08,-0.8 + 0.15) to [out = 45, in = 135] (0.8 - 0.095,-0.8 - 0.1);
\draw (0.15,0.866-0.05) to [out = 90, in = 135] (1.2,0.7) to [out = -45, in = -45] (0.8 + 0.08,-0.8 + 0.15);
\draw (-0.15,0.866-0.05) to [out = 90, in = 45] (-1.20, 0.7) to [out = 225, in = 225] (-0.8 - 0.08,-0.8 + 0.15);
\draw (0.8 - 0.095,-0.8 - 0.1) to [out = -45, in = 0] (0, -1.6) to [out = 180, in = 225]  (-0.8 + 0.095,-0.8 - 0.1);
\end{scope}

\begin{scope}[xshift = 4.5 cm, yshift = -3 cm]
\draw[dashed] (0,-0.18) circle (1 cm);
\draw (0.15,0.866-0.05) to [out = -90, in = 135] (0.8 - 0.095,-0.8 - 0.1);
\draw (-0.15,0.866-0.05) to [out = -90, in = 45] (-0.8 - 0.08,-0.8 + 0.15);
\draw (-0.8 + 0.095,-0.8 - 0.1) to [out = 45, in = 135] (0.8 + 0.08,-0.8 + 0.15);
\draw (0.15,0.866-0.05) to [out = 90, in = 135] (1.2,0.7) to [out = -45, in = -45] (0.8 + 0.08,-0.8 + 0.15);
\draw (-0.15,0.866-0.05) to [out = 90, in = 45] (-1.20, 0.7) to [out = 225, in = 225] (-0.8 - 0.08,-0.8 + 0.15);
\draw (0.8 - 0.095,-0.8 - 0.1) to [out = -45, in = 0] (0, -1.6) to [out = 180, in = 225]  (-0.8 + 0.095,-0.8 - 0.1);
\end{scope}

\begin{scope}[xshift = 4.5 cm]
\begin{scope}[xshift = 4.5 cm, yshift = 3 cm]
\draw[dashed] (0,-0.18) circle (1 cm);
\draw (-0.15,0.866-0.05) to [out = -90, in = 135] (0.8 + 0.08,-0.8 + 0.15);
\draw (0.15,0.866-0.05) to [out = -90,  in = 60] (-0.2, 0) to [out = 240, in = 45] (-0.8 + 0.095,-0.8 - 0.1);
\draw (-0.8 - 0.08,-0.8 + 0.15) to [out = 45, in = 135] (0.8 - 0.095,-0.8 - 0.1);
\draw (0.15,0.866-0.05) to [out = 90, in = 135] (1.2,0.7) to [out = -45, in = -45] (0.8 + 0.08,-0.8 + 0.15);
\draw (-0.15,0.866-0.05) to [out = 90, in = 45] (-1.20, 0.7) to [out = 225, in = 225] (-0.8 - 0.08,-0.8 + 0.15);
\draw (0.8 - 0.095,-0.8 - 0.1) to [out = -45, in = 0] (0, -1.6) to [out = 180, in = 225]  (-0.8 + 0.095,-0.8 - 0.1);
\end{scope}

\begin{scope}[xshift = 4.5 cm, yshift = 0 cm]
\draw[dashed] (0,-0.18) circle (1 cm);
\draw (-0.15,0.866-0.05) to [out = -90, in = 120] (0.2, 0) to [out = -60, in = 135] (0.8 - 0.095,-0.8 - 0.1);
\draw (0.15,0.866-0.05) to [out = -90, in = 45] (-0.8 - 0.08,-0.8 + 0.15);
\draw (-0.8 + 0.095,-0.8 - 0.1) to [out = 45, in = 135] (0.8 + 0.08,-0.8 + 0.15);
\draw (0.15,0.866-0.05) to [out = 90, in = 135] (1.2,0.7) to [out = -45, in = -45] (0.8 + 0.08,-0.8 + 0.15);
\draw (-0.15,0.866-0.05) to [out = 90, in = 45] (-1.20, 0.7) to [out = 225, in = 225] (-0.8 - 0.08,-0.8 + 0.15);
\draw (0.8 - 0.095,-0.8 - 0.1) to [out = -45, in = 0] (0, -1.6) to [out = 180, in = 225]  (-0.8 + 0.095,-0.8 - 0.1);
\end{scope}

\begin{scope}[xshift = 4.5 cm, yshift = -3 cm]
\draw[dashed] (0,-0.18) circle (1 cm);
\draw (0.15,0.866-0.05) to [out = -90, in = 135] (0.8 - 0.095,-0.8 - 0.1);
\draw (-0.15,0.866-0.05) to [out = -90, in = 45] (-0.8 + 0.095,-0.8 - 0.1);
\draw (-0.8 - 0.08,-0.8 + 0.15) to [out = 45, in = 180] (0, -0.5) to [out = 0, in = 135] (0.8 + 0.08,-0.8 + 0.15);
\draw (0.15,0.866-0.05) to [out = 90, in = 135] (1.2,0.7) to [out = -45, in = -45] (0.8 + 0.08,-0.8 + 0.15);
\draw (-0.15,0.866-0.05) to [out = 90, in = 45] (-1.20, 0.7) to [out = 225, in = 225] (-0.8 - 0.08,-0.8 + 0.15);
\draw (0.8 - 0.095,-0.8 - 0.1) to [out = -45, in = 0] (0, -1.6) to [out = 180, in = 225]  (-0.8 + 0.095,-0.8 - 0.1);
\end{scope}

\end{scope}

\begin{scope}[xshift = 13.5 cm]
\begin{scope}
\draw[dashed] (0,-0.18) circle (1 cm);
\draw (-0.15,0.866-0.05) to [out = -90, in = 120] (0.2, 0) to [out = -60, in = 135] (0.8 - 0.095,-0.8 - 0.1);
\draw (0.15,0.866-0.05) to [out = -90,  in = 60] (-0.2, 0) to [out = 240, in = 45] (-0.8 + 0.095,-0.8 - 0.1);
\draw (-0.8 - 0.08,-0.8 + 0.15) to [out = 45, in = 180] (0, -0.5) to [out = 0, in = 135] (0.8 + 0.08,-0.8 + 0.15);
\draw (0.15,0.866-0.05) to [out = 90, in = 135] (1.2,0.7) to [out = -45, in = -45] (0.8 + 0.08,-0.8 + 0.15);
\draw (-0.15,0.866-0.05) to [out = 90, in = 45] (-1.20, 0.7) to [out = 225, in = 225] (-0.8 - 0.08,-0.8 + 0.15);
\draw (0.8 - 0.095,-0.8 - 0.1) to [out = -45, in = 0] (0, -1.6) to [out = 180, in = 225]  (-0.8 + 0.095,-0.8 - 0.1);
\end{scope}

\end{scope}

\end{tikzpicture}$$
\caption{For any $G'$ corresponding to the fourth configuration in Figure~\ref{fig:3cyclecases}, $\langle G' \rangle_2(1)=2^3 -  3 \cdot 2^2 + 3 \cdot 2 - 2 = 0$.}
\label{fig:3cycleC4}
\end{figure}

\begin{figure}[H]
$$\begin{tikzpicture}[scale = 0.5]

\begin{scope}
\draw[dashed] (0,-0.18) circle (1 cm);
\draw (0.15,0.866-0.05) to [out = -90, in = 135] (0.8 + 0.08,-0.8 + 0.15);
\draw (-0.15,0.866-0.05) to [out = -90, in = 45] (-0.8 - 0.08,-0.8 + 0.15);
\draw (-0.8 + 0.095,-0.8 - 0.1) to [out = 45, in = 135] (0.8 - 0.095,-0.8 - 0.1);

\draw (0.15,0.866-0.05) to [out = 90, in = 135] (1.2,0.7) to [out = -45, in = -45] (0.8 - 0.095,-0.8 - 0.1);
\draw (-0.15,0.866-0.05) to [out = 90, in = 45] (-1.20, 0.7) to [out = 225, in = 225] (-0.8 - 0.08,-0.8 + 0.15);
\draw (0.8 + 0.08,-0.8 + 0.15) to [out = -45, in = 0] (0, -1.6) to [out = 180, in = 225]  (-0.8 + 0.095,-0.8 - 0.1);
\end{scope}

\begin{scope}[xshift = 1 cm, yshift = -0.2 cm]
\draw[->] (0.5,0.5) --(2,2);
\draw[->] (0.5,0) --(2,0);
\draw[->] (0.5,-0.5) --(2,-2);
\end{scope}

\begin{scope}[xshift = 5.5 cm, yshift = -0.2 cm]
\draw[->] (0.5,0.5) --(2,2);
\draw[->] (0.5,-0.5) --(2,-2);
\draw[->] (0.5,3) --(2,3);
\draw[->] (0.5,-3) --(2,-3);
\draw[->] (0.5,-2.5) --(2,-1);
\draw[->] (0.5,2.5) --(2,1);
\end{scope}

\begin{scope}[xshift = 8.5 cm, yshift = -0.2 cm]
\draw[->] (2,2) -- (3.5,0.5);
\draw[->] (2,0) -- (3.5,0);
\draw[->] (2,-2) -- (3.5,-0.5);
\end{scope}

\begin{scope}[xshift = 4.5 cm, yshift = 3 cm]
\draw[dashed] (0,-0.18) circle (1 cm);
\draw (-0.15,0.866-0.05) to [out = -90, in = 135] (0.8 + 0.08,-0.8 + 0.15);
\draw (0.15,0.866-0.05) to [out = -90, in = 45] (-0.8 - 0.08,-0.8 + 0.15);
\draw (-0.8 + 0.095,-0.8 - 0.1) to [out = 45, in = 135] (0.8 - 0.095,-0.8 - 0.1);

\draw (0.15,0.866-0.05) to [out = 90, in = 135] (1.2,0.7) to [out = -45, in = -45] (0.8 - 0.095,-0.8 - 0.1);
\draw (-0.15,0.866-0.05) to [out = 90, in = 45] (-1.20, 0.7) to [out = 225, in = 225] (-0.8 - 0.08,-0.8 + 0.15);
\draw (0.8 + 0.08,-0.8 + 0.15) to [out = -45, in = 0] (0, -1.6) to [out = 180, in = 225]  (-0.8 + 0.095,-0.8 - 0.1);
\end{scope}

\begin{scope}[xshift = 4.5 cm, yshift = 0 cm]
\draw[dashed] (0,-0.18) circle (1 cm);
\draw (0.15,0.866-0.05) to [out = -90, in = 135] (0.8 + 0.08,-0.8 + 0.15);
\draw (-0.15,0.866-0.05) to [out = -90, in = 45] (-0.8 + 0.095,-0.8 - 0.1);
\draw (-0.8 - 0.08,-0.8 + 0.15) to [out = 45, in = 135] (0.8 - 0.095,-0.8 - 0.1);

\draw (0.15,0.866-0.05) to [out = 90, in = 135] (1.2,0.7) to [out = -45, in = -45] (0.8 - 0.095,-0.8 - 0.1);
\draw (-0.15,0.866-0.05) to [out = 90, in = 45] (-1.20, 0.7) to [out = 225, in = 225] (-0.8 - 0.08,-0.8 + 0.15);
\draw (0.8 + 0.08,-0.8 + 0.15) to [out = -45, in = 0] (0, -1.6) to [out = 180, in = 225]  (-0.8 + 0.095,-0.8 - 0.1);
\end{scope}

\begin{scope}[xshift = 4.5 cm, yshift = -3 cm]
\draw[dashed] (0,-0.18) circle (1 cm);
\draw (0.15,0.866-0.05) to [out = -90, in = 135] (0.8 - 0.095,-0.8 - 0.1);
\draw (-0.15,0.866-0.05) to [out = -90, in = 45] (-0.8 - 0.08,-0.8 + 0.15);
\draw (-0.8 + 0.095,-0.8 - 0.1) to [out = 45, in = 135] (0.8 + 0.08,-0.8 + 0.15);
\draw (0.15,0.866-0.05) to [out = 90, in = 135] (1.2,0.7) to [out = -45, in = -45] (0.8 - 0.095,-0.8 - 0.1);
\draw (-0.15,0.866-0.05) to [out = 90, in = 45] (-1.20, 0.7) to [out = 225, in = 225] (-0.8 - 0.08,-0.8 + 0.15);
\draw (0.8 + 0.08,-0.8 + 0.15) to [out = -45, in = 0] (0, -1.6) to [out = 180, in = 225]  (-0.8 + 0.095,-0.8 - 0.1);
\end{scope}

\begin{scope}[xshift = 4.5 cm]
\begin{scope}[xshift = 4.5 cm, yshift = 3 cm]
\draw[dashed] (0,-0.18) circle (1 cm);
\draw (-0.15,0.866-0.05) to [out = -90, in = 135] (0.8 + 0.08,-0.8 + 0.15);
\draw (0.15,0.866-0.05) to [out = -90,  in = 60] (-0.2, 0) to [out = 240, in = 45] (-0.8 + 0.095,-0.8 - 0.1);
\draw (-0.8 - 0.08,-0.8 + 0.15) to [out = 45, in = 135] (0.8 - 0.095,-0.8 - 0.1);
\draw (0.15,0.866-0.05) to [out = 90, in = 135] (1.2,0.7) to [out = -45, in = -45] (0.8 - 0.095,-0.8 - 0.1);
\draw (-0.15,0.866-0.05) to [out = 90, in = 45] (-1.20, 0.7) to [out = 225, in = 225] (-0.8 - 0.08,-0.8 + 0.15);
\draw (0.8 + 0.08,-0.8 + 0.15) to [out = -45, in = 0] (0, -1.6) to [out = 180, in = 225]  (-0.8 + 0.095,-0.8 - 0.1);
\end{scope}

\begin{scope}[xshift = 4.5 cm, yshift = 0 cm]
\draw[dashed] (0,-0.18) circle (1 cm);
\draw (-0.15,0.866-0.05) to [out = -90, in = 120] (0.2, 0) to [out = -60, in = 135] (0.8 - 0.095,-0.8 - 0.1);
\draw (0.15,0.866-0.05) to [out = -90, in = 45] (-0.8 - 0.08,-0.8 + 0.15);
\draw (-0.8 + 0.095,-0.8 - 0.1) to [out = 45, in = 135] (0.8 + 0.08,-0.8 + 0.15);
\draw (0.15,0.866-0.05) to [out = 90, in = 135] (1.2,0.7) to [out = -45, in = -45] (0.8 - 0.095,-0.8 - 0.1);
\draw (-0.15,0.866-0.05) to [out = 90, in = 45] (-1.20, 0.7) to [out = 225, in = 225] (-0.8 - 0.08,-0.8 + 0.15);
\draw (0.8 + 0.08,-0.8 + 0.15) to [out = -45, in = 0] (0, -1.6) to [out = 180, in = 225]  (-0.8 + 0.095,-0.8 - 0.1);
\end{scope}

\begin{scope}[xshift = 4.5 cm, yshift = -3 cm]
\draw[dashed] (0,-0.18) circle (1 cm);
\draw (0.15,0.866-0.05) to [out = -90, in = 135] (0.8 - 0.095,-0.8 - 0.1);
\draw (-0.15,0.866-0.05) to [out = -90, in = 45] (-0.8 + 0.095,-0.8 - 0.1);
\draw (-0.8 - 0.08,-0.8 + 0.15) to [out = 45, in = 180] (0, -0.5) to [out = 0, in = 135] (0.8 + 0.08,-0.8 + 0.15);
\draw (0.15,0.866-0.05) to [out = 90, in = 135] (1.2,0.7) to [out = -45, in = -45] (0.8 - 0.095,-0.8 - 0.1);
\draw (-0.15,0.866-0.05) to [out = 90, in = 45] (-1.20, 0.7) to [out = 225, in = 225] (-0.8 - 0.08,-0.8 + 0.15);
\draw (0.8 + 0.08,-0.8 + 0.15) to [out = -45, in = 0] (0, -1.6) to [out = 180, in = 225]  (-0.8 + 0.095,-0.8 - 0.1);
\end{scope}

\end{scope}

\begin{scope}[xshift = 13.5 cm]
\begin{scope}
\draw[dashed] (0,-0.18) circle (1 cm);
\draw (-0.15,0.866-0.05) to [out = -90, in = 120] (0.2, 0) to [out = -60, in = 135] (0.8 - 0.095,-0.8 - 0.1);
\draw (0.15,0.866-0.05) to [out = -90,  in = 60] (-0.2, 0) to [out = 240, in = 45] (-0.8 + 0.095,-0.8 - 0.1);
\draw (-0.8 - 0.08,-0.8 + 0.15) to [out = 45, in = 180] (0, -0.5) to [out = 0, in = 135] (0.8 + 0.08,-0.8 + 0.15);
\draw (0.15,0.866-0.05) to [out = 90, in = 135] (1.2,0.7) to [out = -45, in = -45] (0.8 - 0.095,-0.8 - 0.1);
\draw (-0.15,0.866-0.05) to [out = 90, in = 45] (-1.20, 0.7) to [out = 225, in = 225] (-0.8 - 0.08,-0.8 + 0.15);
\draw (0.8 + 0.08,-0.8 + 0.15) to [out = -45, in = 0] (0, -1.6) to [out = 180, in = 225]  (-0.8 + 0.095,-0.8 - 0.1);
\end{scope}

\end{scope}

\end{tikzpicture}$$
\caption{For any $G'$ corresponding to the fifth configuration in Figure~\ref{fig:3cyclecases}, $\langle G' \rangle_2(1)=2^2 - 2^3 - 2 \cdot 2 + 2 \cdot 2^2 +  2 - 2 = 0$.}
\label{fig:3cycleC5}
\end{figure}
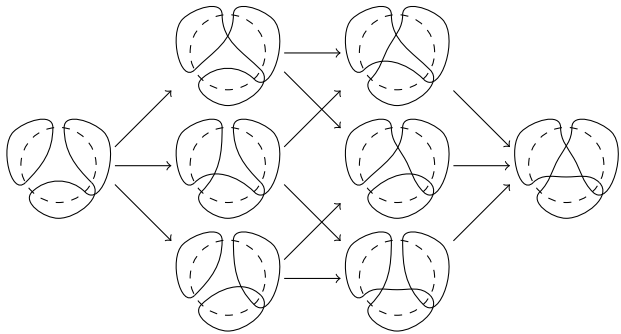

\begin{figure}[H]
$$\begin{tikzpicture}[scale = 0.5]

\begin{scope}
\draw[dashed] (0,-0.18) circle (1 cm);
\draw (0.15,0.866-0.05) to [out = -90, in = 135] (0.8 + 0.08,-0.8 + 0.15);
\draw (-0.15,0.866-0.05) to [out = -90, in = 45] (-0.8 - 0.08,-0.8 + 0.15);
\draw (-0.8 + 0.095,-0.8 - 0.1) to [out = 45, in = 135] (0.8 - 0.095,-0.8 - 0.1);
\draw (0.15,0.866-0.05) to [out = 90, in = 135] (1.2,0.7) to [out = -45, in = -45] (0.8 - 0.095,-0.8 - 0.1);
\draw (-0.15,0.866-0.05) to [out = 90, in = 45] (-1.20, 0.7) to [out = 225, in = 225]  (-0.8 + 0.095,-0.8 - 0.1);
\draw (0.8 + 0.08,-0.8 + 0.15) to [out = -45, in = 0] (0, -1.6) to [out = 180, in = 225] (-0.8 - 0.08,-0.8 + 0.15);
\end{scope}

\begin{scope}[xshift = 1 cm, yshift = -0.2 cm]
\draw[->] (0.5,0.5) --(2,2);
\draw[->] (0.5,0) --(2,0);
\draw[->] (0.5,-0.5) --(2,-2);
\end{scope}

\begin{scope}[xshift = 5.5 cm, yshift = -0.2 cm]
\draw[->] (0.5,0.5) --(2,2);
\draw[->] (0.5,-0.5) --(2,-2);
\draw[->] (0.5,3) --(2,3);
\draw[->] (0.5,-3) --(2,-3);
\draw[->] (0.5,-2.5) --(2,-1);
\draw[->] (0.5,2.5) --(2,1);
\end{scope}

\begin{scope}[xshift = 8.5 cm, yshift = -0.2 cm]
\draw[->] (2,2) -- (3.5,0.5);
\draw[->] (2,0) -- (3.5,0);
\draw[->] (2,-2) -- (3.5,-0.5);
\end{scope}

\begin{scope}[xshift = 4.5 cm, yshift = 3 cm]
\draw[dashed] (0,-0.18) circle (1 cm);
\draw (-0.15,0.866-0.05) to [out = -90, in = 135] (0.8 + 0.08,-0.8 + 0.15);
\draw (0.15,0.866-0.05) to [out = -90, in = 45] (-0.8 - 0.08,-0.8 + 0.15);
\draw (-0.8 + 0.095,-0.8 - 0.1) to [out = 45, in = 135] (0.8 - 0.095,-0.8 - 0.1);
\draw (0.15,0.866-0.05) to [out = 90, in = 135] (1.2,0.7) to [out = -45, in = -45] (0.8 - 0.095,-0.8 - 0.1);
\draw (-0.15,0.866-0.05) to [out = 90, in = 45] (-1.20, 0.7) to [out = 225, in = 225]  (-0.8 + 0.095,-0.8 - 0.1);
\draw (0.8 + 0.08,-0.8 + 0.15) to [out = -45, in = 0] (0, -1.6) to [out = 180, in = 225] (-0.8 - 0.08,-0.8 + 0.15);
\end{scope}

\begin{scope}[xshift = 4.5 cm, yshift = 0 cm]
\draw[dashed] (0,-0.18) circle (1 cm);
\draw (0.15,0.866-0.05) to [out = -90, in = 135] (0.8 + 0.08,-0.8 + 0.15);
\draw (-0.15,0.866-0.05) to [out = -90, in = 45] (-0.8 + 0.095,-0.8 - 0.1);
\draw (-0.8 - 0.08,-0.8 + 0.15) to [out = 45, in = 135] (0.8 - 0.095,-0.8 - 0.1);

\draw (0.15,0.866-0.05) to [out = 90, in = 135] (1.2,0.7) to [out = -45, in = -45] (0.8 - 0.095,-0.8 - 0.1);
\draw (-0.15,0.866-0.05) to [out = 90, in = 45] (-1.20, 0.7) to [out = 225, in = 225]  (-0.8 + 0.095,-0.8 - 0.1);
\draw (0.8 + 0.08,-0.8 + 0.15) to [out = -45, in = 0] (0, -1.6) to [out = 180, in = 225] (-0.8 - 0.08,-0.8 + 0.15);
\end{scope}

\begin{scope}[xshift = 4.5 cm, yshift = -3 cm]
\draw[dashed] (0,-0.18) circle (1 cm);
\draw (0.15,0.866-0.05) to [out = -90, in = 135] (0.8 - 0.095,-0.8 - 0.1);
\draw (-0.15,0.866-0.05) to [out = -90, in = 45] (-0.8 - 0.08,-0.8 + 0.15);
\draw (-0.8 + 0.095,-0.8 - 0.1) to [out = 45, in = 135] (0.8 + 0.08,-0.8 + 0.15);
\draw (0.15,0.866-0.05) to [out = 90, in = 135] (1.2,0.7) to [out = -45, in = -45] (0.8 - 0.095,-0.8 - 0.1);
\draw (-0.15,0.866-0.05) to [out = 90, in = 45] (-1.20, 0.7) to [out = 225, in = 225]  (-0.8 + 0.095,-0.8 - 0.1);
\draw (0.8 + 0.08,-0.8 + 0.15) to [out = -45, in = 0] (0, -1.6) to [out = 180, in = 225] (-0.8 - 0.08,-0.8 + 0.15);
\end{scope}

\begin{scope}[xshift = 4.5 cm]
\begin{scope}[xshift = 4.5 cm, yshift = 3 cm]
\draw[dashed] (0,-0.18) circle (1 cm);
\draw (-0.15,0.866-0.05) to [out = -90, in = 135] (0.8 + 0.08,-0.8 + 0.15);
\draw (0.15,0.866-0.05) to [out = -90,  in = 60] (-0.2, 0) to [out = 240, in = 45] (-0.8 + 0.095,-0.8 - 0.1);
\draw (-0.8 - 0.08,-0.8 + 0.15) to [out = 45, in = 135] (0.8 - 0.095,-0.8 - 0.1);
\draw (0.15,0.866-0.05) to [out = 90, in = 135] (1.2,0.7) to [out = -45, in = -45] (0.8 - 0.095,-0.8 - 0.1);
\draw (-0.15,0.866-0.05) to [out = 90, in = 45] (-1.20, 0.7) to [out = 225, in = 225]  (-0.8 + 0.095,-0.8 - 0.1);
\draw (0.8 + 0.08,-0.8 + 0.15) to [out = -45, in = 0] (0, -1.6) to [out = 180, in = 225] (-0.8 - 0.08,-0.8 + 0.15);
\end{scope}

\begin{scope}[xshift = 4.5 cm, yshift = 0 cm]
\draw[dashed] (0,-0.18) circle (1 cm);
\draw (-0.15,0.866-0.05) to [out = -90, in = 120] (0.2, 0) to [out = -60, in = 135] (0.8 - 0.095,-0.8 - 0.1);
\draw (0.15,0.866-0.05) to [out = -90, in = 45] (-0.8 - 0.08,-0.8 + 0.15);
\draw (-0.8 + 0.095,-0.8 - 0.1) to [out = 45, in = 135] (0.8 + 0.08,-0.8 + 0.15);
\draw (0.15,0.866-0.05) to [out = 90, in = 135] (1.2,0.7) to [out = -45, in = -45] (0.8 - 0.095,-0.8 - 0.1);
\draw (-0.15,0.866-0.05) to [out = 90, in = 45] (-1.20, 0.7) to [out = 225, in = 225]  (-0.8 + 0.095,-0.8 - 0.1);
\draw (0.8 + 0.08,-0.8 + 0.15) to [out = -45, in = 0] (0, -1.6) to [out = 180, in = 225] (-0.8 - 0.08,-0.8 + 0.15);
\end{scope}

\begin{scope}[xshift = 4.5 cm, yshift = -3 cm]
\draw[dashed] (0,-0.18) circle (1 cm);
\draw (0.15,0.866-0.05) to [out = -90, in = 135] (0.8 - 0.095,-0.8 - 0.1);
\draw (-0.15,0.866-0.05) to [out = -90, in = 45] (-0.8 + 0.095,-0.8 - 0.1);
\draw (-0.8 - 0.08,-0.8 + 0.15) to [out = 45, in = 180] (0, -0.5) to [out = 0, in = 135] (0.8 + 0.08,-0.8 + 0.15);
\draw (0.15,0.866-0.05) to [out = 90, in = 135] (1.2,0.7) to [out = -45, in = -45] (0.8 - 0.095,-0.8 - 0.1);
\draw (-0.15,0.866-0.05) to [out = 90, in = 45] (-1.20, 0.7) to [out = 225, in = 225]  (-0.8 + 0.095,-0.8 - 0.1);
\draw (0.8 + 0.08,-0.8 + 0.15) to [out = -45, in = 0] (0, -1.6) to [out = 180, in = 225] (-0.8 - 0.08,-0.8 + 0.15);
\end{scope}

\end{scope}

\begin{scope}[xshift = 13.5 cm]
\begin{scope}
\draw[dashed] (0,-0.18) circle (1 cm);
\draw (-0.15,0.866-0.05) to [out = -90, in = 120] (0.2, 0) to [out = -60, in = 135] (0.8 - 0.095,-0.8 - 0.1);
\draw (0.15,0.866-0.05) to [out = -90,  in = 60] (-0.2, 0) to [out = 240, in = 45] (-0.8 + 0.095,-0.8 - 0.1);
\draw (-0.8 - 0.08,-0.8 + 0.15) to [out = 45, in = 180] (0, -0.5) to [out = 0, in = 135] (0.8 + 0.08,-0.8 + 0.15);

\draw (0.15,0.866-0.05) to [out = 90, in = 135] (1.2,0.7) to [out = -45, in = -45] (0.8 - 0.095,-0.8 - 0.1);
\draw (-0.15,0.866-0.05) to [out = 90, in = 45] (-1.20, 0.7) to [out = 225, in = 225]  (-0.8 + 0.095,-0.8 - 0.1);
\draw (0.8 + 0.08,-0.8 + 0.15) to [out = -45, in = 0] (0, -1.6) to [out = 180, in = 225] (-0.8 - 0.08,-0.8 + 0.15);
\end{scope}

\end{scope}

\end{tikzpicture}$$
\caption{For any $G'$ corresponding to the sixth configuration in Figure~\ref{fig:3cyclecases}, $\langle G' \rangle_2(1)=2 - 2 - 2 \cdot 2^2 + 2 \cdot 2 +  2^3 - 2^2 = 0$.}
\label{fig:3cycleC6}
\end{figure}
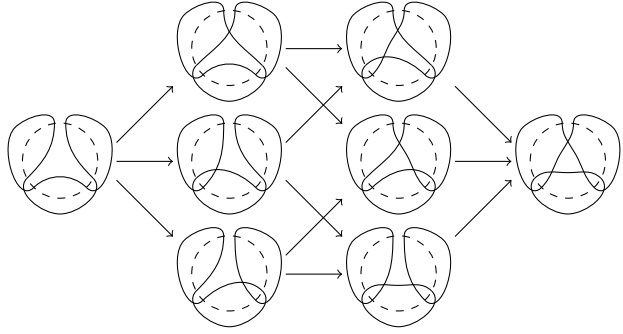

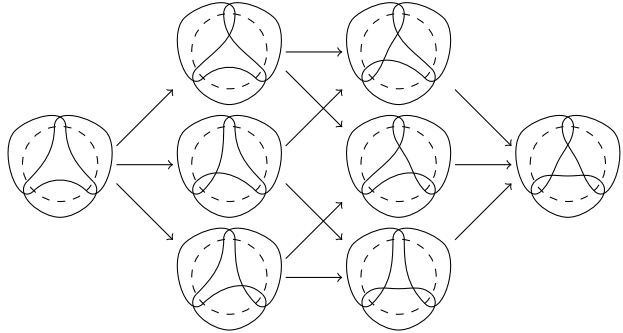
\begin{figure}[H]
$$\begin{tikzpicture}[scale = 0.5]

\begin{scope}
\draw[dashed] (0,-0.18) circle (1 cm);
\draw (0.15,0.866-0.05) to [out = -90, in = 135] (0.8 + 0.08,-0.8 + 0.15);
\draw (-0.15,0.866-0.05) to [out = -90, in = 45] (-0.8 - 0.08,-0.8 + 0.15);
\draw (-0.8 + 0.095,-0.8 - 0.1) to [out = 45, in = 135] (0.8 - 0.095,-0.8 - 0.1);
\draw (-0.15,0.866-0.05) to [out = 90, in = 135] (1.2,0.7) to [out = -45, in = -45] (0.8 - 0.095,-0.8 - 0.1);
\draw (0.15,0.866-0.05) to [out = 90, in = 45] (-1.20, 0.7) to [out = 225, in = 225]  (-0.8 + 0.095,-0.8 - 0.1);
\draw (0.8 + 0.08,-0.8 + 0.15) to [out = -45, in = 0] (0, -1.6) to [out = 180, in = 225] (-0.8 - 0.08,-0.8 + 0.15);
\end{scope}

\begin{scope}[xshift = 1 cm, yshift = -0.2 cm]
\draw[->] (0.5,0.5) --(2,2);
\draw[->] (0.5,0) --(2,0);
\draw[->] (0.5,-0.5) --(2,-2);
\end{scope}

\begin{scope}[xshift = 5.5 cm, yshift = -0.2 cm]
\draw[->] (0.5,0.5) --(2,2);
\draw[->] (0.5,-0.5) --(2,-2);
\draw[->] (0.5,3) --(2,3);
\draw[->] (0.5,-3) --(2,-3);
\draw[->] (0.5,-2.5) --(2,-1);
\draw[->] (0.5,2.5) --(2,1);
\end{scope}

\begin{scope}[xshift = 8.5 cm, yshift = -0.2 cm]
\draw[->] (2,2) -- (3.5,0.5);
\draw[->] (2,0) -- (3.5,0);
\draw[->] (2,-2) -- (3.5,-0.5);
\end{scope}

\begin{scope}[xshift = 4.5 cm, yshift = 3 cm]
\draw[dashed] (0,-0.18) circle (1 cm);
\draw (-0.15,0.866-0.05) to [out = -90, in = 135] (0.8 + 0.08,-0.8 + 0.15);
\draw (0.15,0.866-0.05) to [out = -90, in = 45] (-0.8 - 0.08,-0.8 + 0.15);
\draw (-0.8 + 0.095,-0.8 - 0.1) to [out = 45, in = 135] (0.8 - 0.095,-0.8 - 0.1);
\draw (-0.15,0.866-0.05) to [out = 90, in = 135] (1.2,0.7) to [out = -45, in = -45] (0.8 - 0.095,-0.8 - 0.1);
\draw (0.15,0.866-0.05) to [out = 90, in = 45] (-1.20, 0.7) to [out = 225, in = 225]  (-0.8 + 0.095,-0.8 - 0.1);
\draw (0.8 + 0.08,-0.8 + 0.15) to [out = -45, in = 0] (0, -1.6) to [out = 180, in = 225] (-0.8 - 0.08,-0.8 + 0.15);
\end{scope}

\begin{scope}[xshift = 4.5 cm, yshift = 0 cm]
\draw[dashed] (0,-0.18) circle (1 cm);
\draw (0.15,0.866-0.05) to [out = -90, in = 135] (0.8 + 0.08,-0.8 + 0.15);
\draw (-0.15,0.866-0.05) to [out = -90, in = 45] (-0.8 + 0.095,-0.8 - 0.1);
\draw (-0.8 - 0.08,-0.8 + 0.15) to [out = 45, in = 135] (0.8 - 0.095,-0.8 - 0.1);
\draw (-0.15,0.866-0.05) to [out = 90, in = 135] (1.2,0.7) to [out = -45, in = -45] (0.8 - 0.095,-0.8 - 0.1);
\draw (0.15,0.866-0.05) to [out = 90, in = 45] (-1.20, 0.7) to [out = 225, in = 225]  (-0.8 + 0.095,-0.8 - 0.1);
\draw (0.8 + 0.08,-0.8 + 0.15) to [out = -45, in = 0] (0, -1.6) to [out = 180, in = 225] (-0.8 - 0.08,-0.8 + 0.15);
\end{scope}

\begin{scope}[xshift = 4.5 cm, yshift = -3 cm]
\draw[dashed] (0,-0.18) circle (1 cm);
\draw (0.15,0.866-0.05) to [out = -90, in = 135] (0.8 - 0.095,-0.8 - 0.1);
\draw (-0.15,0.866-0.05) to [out = -90, in = 45] (-0.8 - 0.08,-0.8 + 0.15);
\draw (-0.8 + 0.095,-0.8 - 0.1) to [out = 45, in = 135] (0.8 + 0.08,-0.8 + 0.15);
\draw (-0.15,0.866-0.05) to [out = 90, in = 135] (1.2,0.7) to [out = -45, in = -45] (0.8 - 0.095,-0.8 - 0.1);
\draw (0.15,0.866-0.05) to [out = 90, in = 45] (-1.20, 0.7) to [out = 225, in = 225]  (-0.8 + 0.095,-0.8 - 0.1);
\draw (0.8 + 0.08,-0.8 + 0.15) to [out = -45, in = 0] (0, -1.6) to [out = 180, in = 225] (-0.8 - 0.08,-0.8 + 0.15);
\end{scope}

\begin{scope}[xshift = 4.5 cm]
\begin{scope}[xshift = 4.5 cm, yshift = 3 cm]
\draw[dashed] (0,-0.18) circle (1 cm);
\draw (-0.15,0.866-0.05) to [out = -90, in = 135] (0.8 + 0.08,-0.8 + 0.15);
\draw (0.15,0.866-0.05) to [out = -90,  in = 60] (-0.2, 0) to [out = 240, in = 45] (-0.8 + 0.095,-0.8 - 0.1);
\draw (-0.8 - 0.08,-0.8 + 0.15) to [out = 45, in = 135] (0.8 - 0.095,-0.8 - 0.1);
\draw (-0.15,0.866-0.05) to [out = 90, in = 135] (1.2,0.7) to [out = -45, in = -45] (0.8 - 0.095,-0.8 - 0.1);
\draw (0.15,0.866-0.05) to [out = 90, in = 45] (-1.20, 0.7) to [out = 225, in = 225]  (-0.8 + 0.095,-0.8 - 0.1);
\draw (0.8 + 0.08,-0.8 + 0.15) to [out = -45, in = 0] (0, -1.6) to [out = 180, in = 225] (-0.8 - 0.08,-0.8 + 0.15);
\end{scope}

\begin{scope}[xshift = 4.5 cm, yshift = 0 cm]
\draw[dashed] (0,-0.18) circle (1 cm);
\draw (-0.15,0.866-0.05) to [out = -90, in = 120] (0.2, 0) to [out = -60, in = 135] (0.8 - 0.095,-0.8 - 0.1);
\draw (0.15,0.866-0.05) to [out = -90, in = 45] (-0.8 - 0.08,-0.8 + 0.15);
\draw (-0.8 + 0.095,-0.8 - 0.1) to [out = 45, in = 135] (0.8 + 0.08,-0.8 + 0.15);
\draw (-0.15,0.866-0.05) to [out = 90, in = 135] (1.2,0.7) to [out = -45, in = -45] (0.8 - 0.095,-0.8 - 0.1);
\draw (0.15,0.866-0.05) to [out = 90, in = 45] (-1.20, 0.7) to [out = 225, in = 225]  (-0.8 + 0.095,-0.8 - 0.1);
\draw (0.8 + 0.08,-0.8 + 0.15) to [out = -45, in = 0] (0, -1.6) to [out = 180, in = 225] (-0.8 - 0.08,-0.8 + 0.15);
\end{scope}

\begin{scope}[xshift = 4.5 cm, yshift = -3 cm]
\draw[dashed] (0,-0.18) circle (1 cm);
\draw (0.15,0.866-0.05) to [out = -90, in = 135] (0.8 - 0.095,-0.8 - 0.1);
\draw (-0.15,0.866-0.05) to [out = -90, in = 45] (-0.8 + 0.095,-0.8 - 0.1);
\draw (-0.8 - 0.08,-0.8 + 0.15) to [out = 45, in = 180] (0, -0.5) to [out = 0, in = 135] (0.8 + 0.08,-0.8 + 0.15);
\draw (-0.15,0.866-0.05) to [out = 90, in = 135] (1.2,0.7) to [out = -45, in = -45] (0.8 - 0.095,-0.8 - 0.1);
\draw (0.15,0.866-0.05) to [out = 90, in = 45] (-1.20, 0.7) to [out = 225, in = 225]  (-0.8 + 0.095,-0.8 - 0.1);
\draw (0.8 + 0.08,-0.8 + 0.15) to [out = -45, in = 0] (0, -1.6) to [out = 180, in = 225] (-0.8 - 0.08,-0.8 + 0.15);
\end{scope}

\end{scope}

\begin{scope}[xshift = 13.5 cm]
\begin{scope}
\draw[dashed] (0,-0.18) circle (1 cm);
\draw (-0.15,0.866-0.05) to [out = -90, in = 120] (0.2, 0) to [out = -60, in = 135] (0.8 - 0.095,-0.8 - 0.1);
\draw (0.15,0.866-0.05) to [out = -90,  in = 60] (-0.2, 0) to [out = 240, in = 45] (-0.8 + 0.095,-0.8 - 0.1);
\draw (-0.8 - 0.08,-0.8 + 0.15) to [out = 45, in = 180] (0, -0.5) to [out = 0, in = 135] (0.8 + 0.08,-0.8 + 0.15);
\draw (-0.15,0.866-0.05) to [out = 90, in = 135] (1.2,0.7) to [out = -45, in = -45] (0.8 - 0.095,-0.8 - 0.1);
\draw (0.15,0.866-0.05) to [out = 90, in = 45] (-1.20, 0.7) to [out = 225, in = 225]  (-0.8 + 0.095,-0.8 - 0.1);
\draw (0.8 + 0.08,-0.8 + 0.15) to [out = -45, in = 0] (0, -1.6) to [out = 180, in = 225] (-0.8 - 0.08,-0.8 + 0.15);
\end{scope}

\end{scope}

\end{tikzpicture}$$
\caption{For any $G'$ corresponding to the seventh configuration in Figure~\ref{fig:3cyclecases}, $\langle G' \rangle_2(1)=2 - 3 \cdot 2 + 3 \cdot 2^2 - 2^3 = 0$.}
\label{fig:3cycleC7}
\end{figure}

\end{document}